\newtheorem{theorem}{Theorem}[section]
\newtheorem{proposition}[theorem]{Proposition}
\newtheorem{lemma}[theorem]{Lemma}
\theoremstyle{definition}
\newtheorem{definition}[theorem]{Definition}
\theoremstyle{remark}
\numberwithin{equation}{section}
\newcommand{\dd}{\partial}
\newcommand{\cut}{\operatorname{cut}}
\newcommand{\al}{\alpha}
\newcommand{\be}{\beta}
\newcommand{\ZZ}{\mathbb{Z}}
\newcommand{\BB}{\mathfrak{B}}
\newcommand{\MM}{\mathfrak{M}}
\newcommand{\kk}{\textbf{k}}
\begin{document}

\title{Quivers with potentials for cluster varieties associated to braid semigroups.}

\author{Efim Abrikosov}
\address{}
\curraddr{Yale University, Department of Mathematics, 10 Hillhouse Ave, 4th Floor}
\email{efim.abrikosov@yale.edu}

\keywords{Representation Theory, Cluster Varieties, Quivers with Potential}

\begin{abstract}
Let $C$ be  a simply laced generalized Cartan matrix.  Given an element $b$ 
of the generalized braid semigroup related to $C$, we construct a collection of mutation-equivalent quivers with potentials. A quiver with potential in such a collection corresponds to an expression of $b$ in terms of the standard generators. For two expressions that differ by a braid relation, the corresponding quivers with potentials are related by a mutation.

The main application of this result is a construction of a family of $CY_3$ $A_\infty$-categories associated to elements of the braid semigroup related to $C$. 

In particular, we construct a canonical up to  equivalence  $CY_3$ $A_\infty$-category associated to quotient of any Double Bruhat cell $G^{u,v}/{\rm Ad} H$ in a simply laced reductive Lie group $G$.

We describe the full set of parameters  these categories depend on by defining a 2-dimensional CW-complex and proving that the set of parameters is identified with second cohomology group of this complex.
\end{abstract}

\maketitle

\definecolor{mygreen}{RGB}{10,160,50}

\tikzset{->-/.style={decoration={
			markings,
			mark=at position #1 with {\arrow{>[scale=2.5, length = 3, width = 2.5]}}},postaction={decorate}}}		
\section{Introduction.}

A well-known construction 
associates a   Kac-Moody algebra $\mathfrak{g}(C)$ to any generalized Cartan matrix $C$ (see e.g. \cite{Kac}). 
 If $C$ is $n\times n$ symmetric matrix it is convenient to encode it  by a graph $\Gamma$ with $n$ vertices, where

$$
\# \text{ of edges between } \al\text{ and }\be = -C_{\al\be},
$$

\noindent  and $C_{\al\be}$ is the corresponding entry of the generalized Cartan matrix. In the classical setting $\mathfrak{g}(C)$ is a simple Lie algebra or an affine Lie algebra and the graph is the corresponding Dynkin diagram or its extended version.

Further, one can define a braid semigroup $B_{\Gamma}$. It is generated by the set of generators  
$\{s_\al~|~\al~\in~\text{Vertices of } \Gamma\}$ subject to \emph{braid relations}. In the simply laced case, that is, when $\Gamma$ has at most one edge between any two vertices, these relations take form: 

$$
\begin{cases}
s_\al s_\be s_\al = s_\be s_\al s_\be, &\text{ if } C_{\al\be}=-1;\\
s_\al s_\be = s_\be s_\al,  &\text{ if } C_{\al\be}=0.
\end{cases}
$$ 

\noindent A further quotient by relations $s_\al^2 = e$ gives a standard realization of the Weyl group $W$. We will often use the term ``braid semigroup'' for the doubled braid semigroup $\BB_\Gamma = B_\Gamma\times B_\Gamma$.

\medskip

In \cite{FG05} a cluster variety ${\mathcal X}_b$ corresponding to an arbitrary element $b\in \BB_\Gamma$ was defined.  Recall that a cluster algebra \cite{FZ01} or a cluster 
variety structure  \cite{FG03}, is described by combinatorial data that consists of a collection of \emph{seeds} related by involutions called \emph{mutations}. In simply laced case a seed is just a quiver with variables attached to its vertices.

Thus, for each   $b\in \BB_\Gamma$, Fock and Goncharov describe a collection of quivers related by mutations. Elements of this collection correspond to reduced presentations  for $b$ via generators.  Hence, for every word $x = s_{i_1}s_{i_2}\ldots s_{i_k}$, there is a quiver $Q(x)$. If $x$ and $y$ are related by a single braid relation, then $Q(y)$ is obtained from $Q(x)$ by an explicit mutation.

In particular any pair of elements $(u,v)$ of a classical 
Weyl group, being lifted to the braid semigroup, provides 
a  cluster Poisson variety structure on the corresponding double Bruhat cell of the related simple Lie group $G^{u,v}\subset G$ \cite{FG05}. The corresponding cluster algebra structure was described in \cite{BFZ03}.

\medskip
Main corollary of results of our paper is a $CY_3$-categorification of this picture for arbitrary simply laced generalized Cartan matrices. This is done using the construction from \cite{KS08} that associates to a \emph{quiver with potential} a $CY_3 \ A_{\infty}$-category equipped with a collection of spherical generators. A potential is a (possibly infinite) linear combination of cyclic paths in the path algebra of the quiver (see section 2 for more precise definitions). Roughly speaking, vertices of the quiver correspond to spherical generators of the category, and cycles in the potential encode higher compositions of $\operatorname{Ext^1}$ in the associated $CY_3\ A_{\infty}$-category. 

Quivers with potential were introduced in \cite{DWZ07}, where authors also described process of mutation extended to potentials. From the point of view of \cite{KS08} these mutations is a combinatorial incarnation of certain transformations of collections of spherical generators. Two quivers with potential are called \emph{mutation-equivalent} if there is a sequence of mutations relating them.

The   $CY_3$-categorification of cluster varieties is informally depicted in the following diagram:

\begin{figure}[h!]
\begin{minipage}{0.44\textwidth}
\centering
Cluster variety
\vspace{5pt}

$\{$Quivers related by mutations$\}$

\end{minipage}
\begin{minipage}{0.1\textwidth}
\centering
$\longrightarrow$
\end{minipage}
\begin{minipage}{0.44\textwidth}
\centering
$CY_3\ A_{\infty}$-category

\vspace{5pt}

$\left\{\begin{tabular}{c}
Quivers with potentials \\
related by mutations
\end{tabular}\right\}$
\end{minipage}
\end{figure}

\medskip
We want to stress that the space of all   potentials for a given quiver is infinite-dimensional. So a priori a cluster variety gives rise to   an infinite dimensional family of 
${\rm CY}_3$ $A_\infty$-categories. 

 Our main result,  Theorem \ref{main},  tells that  there is an explicitly described \underline{finite dimensional} family of ${\rm CY}_3$ $A_\infty$-categories 
 up to equivalences associated to any element $b\in \BB_\Gamma$.

\begin{theorem}\label{main} Let   $\Gamma$ be a graph with at most one edge between any two vertices. 

Then any element $b$ of the braid semigroup $\BB_\Gamma$ (subject to  technical restrictions (\ref{exclusions},\ref{connect})), gives rise to a  collection  of mutation-equivalent quivers with potential, each corresponding  to a reduced expression of $b$. 
Furthermore, there is  a finite-dimensional family of such collections. 
\end{theorem}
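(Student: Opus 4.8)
The plan is to upgrade the Fock--Goncharov quivers $Q(x)$ of \cite{FG05} to quivers with potential in a manner compatible with braid moves, and then feed the result into the construction of \cite{KS08}. Recall that for a fixed $b\in\BB_\Gamma$ the reduced expressions of $b$ are the vertices of a connected graph $\mathcal{W}$ whose edges are the braid moves (and commutation moves), connectivity being immediate from the presentation of the braid semigroup, and that \cite{FG05} already identifies each edge of $\mathcal{W}$ with a specific quiver mutation. Thus it suffices to attach to every $Q(x)$ a potential $W(x)$ — a finite combination of the minimal (triangular) cycles of $Q(x)$ — so that whenever $x\to y$ is a move the Derksen--Weyman--Zelevinsky mutation \cite{DWZ07} at the distinguished vertex carries $(Q(x),W(x))$ to $(Q(y),W(y))$ up to right-equivalence.

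The first step is to fix a local model. Since $Q(x)$ is read off the wiring diagram of $x$, its arrows organize into elementary triangular and square faces, and I would write down the potential contributed by the configuration attached to a single generator $s_\al$ and by a braid-relation block $s_\al s_\be s_\al$ as a signed sum of the bounding $3$-cycles, setting $W(x)$ equal to the sum of these local contributions over the diagram. Because a braid move alters the diagram only inside one block, verifying mutation compatibility reduces to a single finite computation: one applies the DWZ rule (adjoin the composite arrows through the mutated vertex, reverse the incident arrows, delete the resulting $2$-cycles together with their partners in the potential) to the local model and checks that the output is right-equivalent to the local model of $y$. These local checks I expect to be tedious but routine.

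The substance of the theorem is global consistency together with finite dimensionality. Transporting a potential along two different edge-paths of $\mathcal{W}$ with the same endpoints must produce right-equivalent quivers with potential; equivalently, transport around every closed loop of $\mathcal{W}$ must act trivially up to right-equivalence. I would encode this by building a $2$-dimensional CW-complex whose $1$-skeleton is $\mathcal{W}$ and whose $2$-cells are attached along the elementary relations among braid moves, and then interpret the obstruction to consistency as a $2$-cocycle on this complex. Here one must carefully separate two sources of freedom: changing each $W(x)$ by a right-equivalence, and choosing the global collection itself. The key simplification is that, although the group of right-equivalences is infinite-dimensional, a reduced potential is determined up to right-equivalence by the finitely many coefficients of its minimal cycles, since longer cycles can be absorbed by a formal change of variables.

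The main obstacle is precisely this last step: proving that a globally consistent assignment $x\mapsto(Q(x),W(x))$ exists and classifying the space of all such assignments. After the change-of-variable reduction, each local consistency requirement becomes \emph{linear} in the finitely many cycle-coefficients, so the obstruction cocycle takes values in a finite-dimensional space; I would show this cocycle is a coboundary (yielding existence of the mutation-equivalent collection) and that the set of consistent collections modulo right-equivalence is an affine space over the second cohomology of the CW-complex. This delivers simultaneously the existence asserted in the first sentence of the theorem and the finite dimensionality of the family of such collections in the second.
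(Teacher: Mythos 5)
Your overall strategy---attach to each $Q(x)$ a potential built from distinguished cycles, check the DWZ mutation locally on braid moves, and parametrize the resulting collections cohomologically---is the right one in spirit, but two of your central claims fail as stated. First, the potentials cannot be supported on ``minimal (triangular) cycles'': the cycles that must carry the potential here are the $\al\bar\be$-cycles of (\ref{ab-cycle}), which run along an entire maximal block of letters $s_\al, s_{\bar\be}$ and have unbounded length. A potential supported only on triangles is not closed under the relevant mutations: premutation creates $2$-cycles whose removal (Lemma \ref{red}) requires the long cycles to appear with nonzero coefficients, and the reduction step then produces new long cycles. This is exactly why the paper introduces \emph{primitive} potentials---sums of \emph{all} $\al\bar\be$-cycles with nonzero coefficients---and proves this class is preserved by mutation (Theorem \ref{mut_pot}(ii)). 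Second, your ``key simplification''---that a reduced potential is determined up to right-equivalence by the coefficients of its minimal cycles, longer terms being absorbable by a change of variables---is unproven and is not true in the form you need. Because $\al\bar\be$-cycles in $Q(\widetilde x)$ can have chords (this really happens, e.g.\ for $\Gamma=A_2$, $x = s_1s_{\bar 2}s_1s_2s_{\bar 1}s_1$), the higher-order part $\varphi^{(2)}$ of a right-equivalence genuinely can move the coefficients of these cycles; controlling this is the technical heart of the paper (Lemma \ref{simple_chords}, Lemma \ref{phiW}, and the case analysis proving Proposition \ref{phi1}), and nothing in your proposal substitutes for it.

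The global bookkeeping is also organized around the wrong complex. You attach $2$-cells to the exchange graph $\mathcal{W}$ of reduced expressions and want to (a) kill an obstruction cocycle there and (b) read off the family as $H^2$ of that complex. In the paper, the CW-complex $\mathcal{C}(\widetilde x)$ has the \emph{quiver} as $1$-skeleton and the $\al\bar\be$-cycles as $2$-cells; the family of collections is $H^2(\mathcal{C}(\widetilde x),\kk^\times)\cong(\kk^\times)^{\operatorname{rk}H^1(\Gamma,\ZZ)}$, and no separate monodromy argument over $\mathcal{W}$ is needed: once primitive potentials on a fixed quiver are classified up to right-equivalence by their cohomology class, and the class is shown to be preserved under mutation, transport around any loop of braid moves automatically returns a right-equivalent potential. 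Moreover, that invariance is itself delicate: the naive class is \emph{not} preserved---mutation can negate one cycle's coefficient or change the number $k_{\al\be}$ of cycles between two perches---and one must pass to the sign-twisted class $h'(W_x)$ to obtain Theorem \ref{mut_pot}(iii). Your framework, in which consistency is an obstruction cocycle asserted to be a coboundary, offers no mechanism for discovering or absorbing this twist; and your claim that the local consistency conditions are \emph{linear} in the cycle coefficients is also off, since under reduction coefficients multiply (Lemma \ref{red}), which is why the correct coefficient group is the multiplicative group $\kk^\times$ rather than a vector space.
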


Here are a few remarks about the theorem. Quivers with potentials associated to bipartite ribbon graphs were first considered by physicists, see e.g. \cite{FHKVW}. A family of mutation-equivalent quivers with potentials associated to triangulated surfaces was established by Labardini-Fragoso in \cite{L08}. The underlying quivers were considered   in \cite{GSV03}, \cite{FG03}. Any such quiver corresponds to a triangulation of a surface and if two triangulations are related by a flip, then two associated quivers differ by mutation. Quivers with potentials  for the moduli space of framed $n$-dimensional local systems on a decorated surface were introduced in \cite{G16}: from this point of view the $n=2$ case is the one studied in \cite{L08}.

 \medskip
The proof of Theorem \ref{main} is based on careful study of the class of potentials that we call \emph{primitive}, it is preserved by mutations and exhibits particularly nice properties.  
We denote a quiver with potential associated to a presentation $x$ of $b$ by $(Q(\widetilde x), W_x)$. We provide a full description of the space of primitive potentials on $Q(\widetilde x)$ modulo the action of the group of right-equivalences. This space is identified with the second cohomology group of a $2$-dimensional $CW$-complex $\mathcal{C}(\widetilde x)$ whose $1$-skeleton coincides with the quiver (Proposition \ref{mut_pot}). If two quivers are related by mutations, then the associated CW-complexes are homotopy equivalent. Based on these facts the proof of Theorem \ref{main} boils down to the check that the second cohomology class associated to a primitive potential is preserved under mutations.
\medskip

The first application of our construction is when  $\Gamma$ is a simply laced Dynkin diagram. Then we obtain $CY_3$-categories related to Double Bruhat cells. More precisely, quivers $Q(\widetilde x)$ actually describe cluster structure on quotients by the adjoint action of Cartan subgroup:  $G^{u,v}/{\operatorname{Ad} H}$. The potentials we introduce give rise to certain combinatorially described categories. In this case our description of the space of primitive potentials implies that all of them are right-equivalent, it follows that there is a \emph{unique} up to equivalence category associated to a primitive potential for $G^{u,v}/{\operatorname{Ad} H}$.

The geometric origin of this category is a subject for the future research: we believe that it should describe a full subcategory of  a wrapped Fukaya category of  certain non-compact 
Calabi-Yau threefold. 
This program in a different context of quivers with potentials associated to triangulated surfaces \cite{L08} was carried out in detail in \cite{BS13},\cite{S13}. 
It was conjectured in \cite{G16} that the ${\rm CY}_3$ $A_\infty$-categories associated to  
the quivers with potentials  for the moduli space of framed $n$-dimensional local systems on  decorated surfaces 
  are similarly related to  wrapped Fukaya categories of  the open CY threefolds studied in \cite{DDP06}. 

Another application is for $\Gamma = \widehat{A}_n,\ n\geq2$, the extended Dynkin diagram of type A. Cluster structure in this case was studied in detail in \cite{FM14}. In this paper, Fock and Marshakov proved that a class of integrable systems related to dimers on a torus, introduced by Goncharov and Kenyon in \cite{GK11}, can be realized using cluster varieties associated to quivers $Q(\widetilde x)$. Theorem \ref{main} in this case gives a $1$-parametric family of $CY_3$-categories. In fact, the CW-complex that controls the family of potentials is naturally identified with the torus where dimers are defined. The second cohomology group of the torus is indeed $1$-dimensional.
As explained in \cite{GK11}, the cluster integrable systems related to dimers on a torus are parametrized by   Newton polygons $N$. 
It is plausible that our $CY_3$-categories describe full subcategories of a 1-parametric family of   wrapped Fukaya categories of the toric Calabi-Yau threefold associated with the cone over the polygon $N$. 
\medskip
${\rm CY}_3$ $A_\infty$-categories
I am very grateful to Alexander Goncharov for introducing me to this subject and for many useful discussions and remarks.

The structure of the paper is the following: in section 2 we recall precise definitions for quivers with potentials; in section 3 the construction of quivers $Q(\widetilde x)$ is given; section 4 is devoted to the study of primitive potentials on these quivers.

\section{Quivers with potential and their mutations.}
\subsection{Generalities on quivers with potential.}
In this section we provide main definitions related to quivers with potentials and introduce notations.

For us \emph{a quiver} $Q$ is a finite collection of vertices $I$ connected by oriented edges. In our approach  quivers of the interest will be glued from elementary pieces along subsets of special vertices called \emph{frozen}. Arrows between frozen vertices are allowed to have weight $\frac 1 2$, so after gluing of two quivers half-arrows either combine to a usual arrow or cancel out depending on their mutual orientations.
More specifically any quiver can be encoded by the following collection:

\begin{definition}
A quiver $Q$ is given by a tuple $(I,I_0,E,E_0, s,t)$ where:
\begin{enumerate}[label=(\roman*)]
	\item $I$ is the set of vertices;
	\item $I_0\subset I$ is the subset of \textbf{frozen vertices};
	\item $E$ is the set of arrows;
	\item $E_0\subset E$ is the subset of \textbf{half-arrows};
	\item these sets are related by maps $s,t:E\rightarrow I$ assigning to every arrow its source and target. 
\end{enumerate}
We require that half-arrows can be adjacent only to frozen vertices, i.e. restrictions of $s$ and $t$ to $E_0$ take values in $I_0$.

\end{definition}

\noindent Sometimes we use notation $Q_0$  (resp. $Q_1$) for the set of vertices (resp. arrows) of quiver $Q$.

The definition of a potential of a quiver relies on the notion of \emph{path algebra} $R\langle Q\rangle$ that we will describe momentarily. Informally, elements of a path algebra $R\langle Q\rangle$ can be thought of as being (linear combinations of) products of composable arrows. By ``composable'' we mean that the source of every successive arrow coincides with the target of the preceding.  Such a product is usually called ``a path''. Our convention is to write products from right to left as for standard composition of maps. Note that source and target maps $s,t$ on $Q$ can be extended to all paths.

Precisely there is the following definition. Fix a base field $\mathbf{k}$ of characteristic zero, and let $R = \mathbf{k}^{Q_0}$ and $A = \mathbf{k}^{Q_1}$ be rings of $\mathbf{k}$-valued functions on vertices and arrows respectively. Note that $R$ is generated by a collection of idempotents $i\in Q_0$ corresponding to $\delta$-functions, and $A$ posesses a natural structure of $R$-bimodule:
$$
i\cdot a \cdot j = \delta_{i,t(e)}\delta_{s(e),j}a\ \ \ \forall i,j\in Q_0, \ a\in Q_1
$$
\noindent To obtain the path algebra we take iterated tensor products of $A$ over $R$.
\begin{definition}
A \textbf{path algebra} of a quiver $Q$ is the graded tensor algebra 
$$R\langle Q\rangle=\bigoplus_{d=0}^\infty \underbrace{A\otimes_R A\ldots\otimes_R A}_{d},$$
\end{definition}
\noindent with the convention that $0$-th graded component is $R$ (its elements are sometimes called ``lazy paths''). Any path algebra has a maximal ideal generated by elements of positive degree, that is by paths of nonzero length, we denote it by $\mathfrak{m}(Q)$. Define the \emph{completed path algebra} $R\langle\langle Q\rangle\rangle$ as the completion of $R\langle Q\rangle$ with respect to powers of the maximal ideal.

One of the advantages of using completed path algebras is that that $R$-homomorphisms of completed algebras have a very simple form. Let $Q$ and $Q'$ be two quivers with the same set of vertices and with arrow spaces $A$ and $A'$. Then structure of these maps is explained by the following proposition which is a little reformulation of Proposition 2.4 from \cite{DWZ07}:
\begin{proposition}\label{right-eq}
Any pair $(\varphi^{(1)}, \varphi^{(2)})$ of $R$-bimodule homomorphisms $\varphi^{(1)}: A\rightarrow A'$ and $\varphi^{(2)}: A \rightarrow \mathfrak{m}(Q')^2$ gives rise to a unique homomorphism of completed path algebras $\varphi:R\langle\langle Q\rangle\rangle\rightarrow R\langle\langle Q'\rangle\rangle$ such that $\varphi |_R=\operatorname{id}$ and $\varphi |_A = (\varphi^{(1)}, \varphi^{(2)})$. Furthermore, $\varphi$ is isomorphism if and only if $\varphi^{(1)}$ is a $R$-bimodule isomorphism.
\end{proposition}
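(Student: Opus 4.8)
The plan is to recognize this statement as the universal property of the tensor algebra combined with a continuity argument using completeness. Since $R\langle Q\rangle = \bigoplus_d A^{\otimes_R d}$ is the free $R$-algebra on the $R$-bimodule $A$, any $R$-bimodule map from $A$ into an $R$-algebra $S$ extends uniquely to an $R$-algebra homomorphism out of $R\langle Q\rangle$. I would apply this to $S = R\langle\langle Q'\rangle\rangle$ and the map $f := \varphi^{(1)} + \varphi^{(2)} \colon A \to R\langle\langle Q'\rangle\rangle$, noting that $\varphi^{(1)}(A) \subseteq A'$ sits in degree one while $\varphi^{(2)}(A) \subseteq \mathfrak{m}(Q')^2$, so in any case $f(A) \subseteq \mathfrak{m}(Q')$. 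This produces a unique $R$-algebra homomorphism $\varphi$ on the uncompleted path algebra with $\varphi|_R = \operatorname{id}$ and $\varphi|_A = f$.

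The next step is to pass to the completion. The key observation is that $\varphi$ preserves the $\mathfrak{m}$-adic filtration: since $\mathfrak{m}(Q)^n$ is spanned by length-$n$ paths $a_1\cdots a_n$ and $\varphi(a_1\cdots a_n) = f(a_1)\cdots f(a_n)$ is a product of $n$ elements of $\mathfrak{m}(Q')$, we have $\varphi(\mathfrak{m}(Q)^n) \subseteq \mathfrak{m}(Q')^n$. Consequently $\varphi$ is continuous for the $\mathfrak{m}$-adic topologies, and because the target is complete it extends uniquely to a continuous homomorphism $\varphi \colon R\langle\langle Q\rangle\rangle \to R\langle\langle Q'\rangle\rangle$; concretely, for $\sum_d x_d$ with $x_d \in A^{\otimes_R d}$ the image $\sum_d \varphi(x_d)$ converges since $\varphi(x_d) \in \mathfrak{m}(Q')^d$. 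Uniqueness is then immediate: any homomorphism satisfying the stated conditions agrees with $\varphi$ on the dense subalgebra $R\langle Q\rangle$ and is forced to be continuous by the same filtration estimate, hence coincides with $\varphi$ everywhere.

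For the isomorphism criterion I would argue on the associated graded. Because $\varphi$ is filtered, it induces a graded homomorphism $\operatorname{gr}\varphi$, and under the identifications $\operatorname{gr} R\langle\langle Q\rangle\rangle \cong R\langle Q\rangle$ and $\operatorname{gr} R\langle\langle Q'\rangle\rangle \cong R\langle Q'\rangle$ the degree-$d$ component of $\operatorname{gr}\varphi$ is $(\varphi^{(1)})^{\otimes_R d}$; indeed $\varphi^{(2)}$ lands in filtration degree $\geq 2$ and therefore contributes nothing to the graded piece in degree one, which is exactly $\varphi^{(1)}$. If $\varphi^{(1)}$ is an $R$-bimodule isomorphism, then each tensor power is an isomorphism, so $\operatorname{gr}\varphi$ is an isomorphism, and the standard fact that a filtered morphism of complete, separated filtered algebras inducing an isomorphism on the associated graded is itself an isomorphism (proved by successive approximation, building the inverse degree by degree and summing it in the complete target) shows $\varphi$ is an isomorphism. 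Conversely, if $\varphi$ is an isomorphism then, since $\varphi|_R = \operatorname{id}$ and $f(A) \subseteq \mathfrak{m}(Q')$, it is compatible with the augmentations onto $R$ and hence carries $\mathfrak{m}(Q)$ onto $\mathfrak{m}(Q')$; thus $\operatorname{gr}\varphi$ is an isomorphism and in particular its degree-one part $\varphi^{(1)}$ is.

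The main obstacle is the passage to the completion: one must ensure both that $\varphi$ is well defined there (convergence of the infinite sums of paths) and that, in the isomorphism case, the inverse genuinely exists inside the completed algebra. Both points rest essentially on the filtration estimate $\varphi(\mathfrak{m}(Q)^n)\subseteq\mathfrak{m}(Q')^n$ together with completeness of the target, whereas the underlying universal property is purely formal.
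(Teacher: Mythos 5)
This proposition is not proved in the paper at all: it is quoted verbatim (as ``a little reformulation'') from Proposition~2.4 of \cite{DWZ07}, so there is no internal argument to compare yours against. Your proof is correct and is essentially the standard argument, close in spirit to the one in \cite{DWZ07}: the universal property of the tensor algebra $R\langle Q\rangle = T_R(A)$, the filtration estimate $\varphi(\mathfrak{m}(Q)^n)\subseteq \mathfrak{m}(Q')^n$ giving the extension to completions, and the associated-graded criterion (with $\operatorname{gr}_d\varphi = (\varphi^{(1)})^{\otimes_R d}$, since $\varphi^{(2)}$ raises filtration degree) for the isomorphism statement. One point deserves to be made explicit, because the paper's formulation, unlike that of \cite{DWZ07}, does not restrict to \emph{continuous} homomorphisms: your uniqueness step rests on the claim that any algebra homomorphism $\psi$ of the completed algebras with $\psi|_R=\operatorname{id}$ and $\psi(A)\subseteq\mathfrak{m}(Q')$ is automatically filtered. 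This is true, but needs the observation that for a finite quiver every element of $\mathfrak{m}(Q)^n$ in the completed algebra can be written as a \emph{finite} sum $\sum_{p} p\, x_p$ over the paths $p$ of length $n$, whence $\psi(\mathfrak{m}(Q)^n)\subseteq\mathfrak{m}(Q')^n$; the same remark cleans up your converse direction, where you need $\varphi^{-1}$ to be filtered (first check $\varphi^{-1}(\mathfrak{m}(Q'))\subseteq\mathfrak{m}(Q)$ via the augmentation, then apply the automatic-filtration argument to $\varphi^{-1}$) before passing to $\operatorname{gr}$. With that detail supplied, your argument is a complete, self-contained proof of exactly what the paper asserts.
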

\noindent Thus, automorphisms of path algebras can be understood as certain lower-triangular matrices with respect to the grading.
\vspace{5pt}

Main objects of our study are \emph{potentials} on quivers. They may be understood as (possibly infinite) linear combinations of closed paths considered up to cyclic order of arrows:

\begin{definition}
Let $R\langle\langle Q\rangle\rangle_{\operatorname{cyc}}$ be the linear subspace of the completed path algebra generated by cyclic paths, i.e. products of the form $\prod_{i =1}^n a_{i}$ such that $t(a_1) = s(a_n)$. A \textbf{potential} on $Q$ is an element of $R\langle\langle Q\rangle\rangle_{\operatorname{cyc}}$ considered up to cyclic shift:
$$
a_1 a_2\ldots a_n \longleftrightarrow a_na_1\ldots a_{n-1}
$$
\end{definition}

\noindent\emph{Remark:} It might be helpful to define dual path algebra $R\langle Q^*\rangle$ which is obtained by replacing vector spaces of arrows by their dual vector spaces. Then functionals on this algebra are isomorphic to $R\langle\langle Q\rangle\rangle$. Potentials can be viewed as functionals on $R\langle Q^*\rangle/ [R\langle Q^*\rangle,R\langle Q^*\rangle]$, so it becomes clear why they are considered up to cyclic shifts.

Thus \emph{a quiver with potential} (QP for short) is just a pair $(Q,W)$ where $Q$ is a quiver and $W$ is an element of $R\langle\langle Q\rangle\rangle_{\operatorname{cyc}}$ considered up to cyclic shifts. It immediately follows from definitions that $R$-homomorphisms of completed path algebras induce maps of QPs.
\begin{definition}
Let $(Q,W)$ and $(Q',W')$  be two quivers with potentials on the same set of vertices. We say that they are \textbf{right-equivalent} if there is an $R$-algebra isomorphism $\varphi: R\langle\langle Q\rangle\rangle\rightarrow R\langle\langle Q'\rangle\rangle$ such that $\varphi(W)=W'$.
\end{definition}

Given two quivers with potential on the same set of vertices one can form their direct sum $(Q,W)\oplus (Q',W')$ in an intuitive sense: this is new quiver with potential where all arrow spaces are direct sums of arrow spaces of the summands and the potential is the sum of their potentials. 

\begin{definition}
A quiver with potential is called \textbf{trivial} if it is right-equivalent to the direct sum $\bigoplus (Q_i,W_i)$ where each summand has exactly two arrows $a_i$ and $b_i$ forming a $2$-cycle and the potential is the product of these arrows $W_i =a_ib_i$.
	
If degree $2$ component of the potential of a quiver is zero then it is called \textbf{reduced}.
\end{definition}

Next we quote ``Splitting Theorem'' from \cite{DWZ07} which is essential for mutations:

\begin{theorem}\label{split}
	Every quiver with potential $(Q,W)$ is right-equivalent to direct sum of reduced and trivial quivers with potential $(Q_{\operatorname{red}},W_{\operatorname{red}})\oplus(Q_{\operatorname{triv}},W_{\operatorname{triv}})$. Moreover, right-equivalence class of each of the summands is determined by right-equivalence class of $(Q,W)$.
\end{theorem}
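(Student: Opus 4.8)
The plan is to follow the strategy of Derksen--Weyman--Zelevinsky: first normalize the quadratic part of $W$ by a purely linear change of arrows, then destroy the remaining interaction between the ``trivial'' and ``reduced'' arrows by an infinite but convergent sequence of unipotent automorphisms, and finally extract uniqueness from the invariance of the quadratic form. Throughout I would write $W = \sum_d W^{(d)}$ with $W^{(d)}$ the homogeneous component spanned by cyclic paths of length $d$, and use $\mathfrak{m}=\mathfrak{m}(Q)$.

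\textbf{Normalizing the degree-$2$ part.} The component $W^{(2)}$ is a sum of $2$-cycles, and since cyclic paths are taken up to cyclic shift it is the same datum as a family of bilinear pairings between arrows $i\to j$ and arrows $j\to i$, for each unordered pair of vertices. By elementary linear algebra I would choose new $\mathbf{k}$-bases of the arrow spaces bringing each pairing into hyperbolic normal form; this produces arrows $a_1,b_1,\dots,a_s,b_s$ forming $2$-cycles with $W^{(2)}=\sum_{k=1}^s a_k b_k$, while all remaining arrows no longer occur in $W^{(2)}$. This change of basis is an $R$-bimodule isomorphism $\varphi^{(1)}\colon A\to A$, so by Proposition \ref{right-eq} it extends to a right-equivalence. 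I call $a_k,b_k$ the trivial arrows and the rest the reduced arrows, giving $A = A_{\operatorname{triv}}\oplus A_{\operatorname{red}}$.

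\textbf{Decoupling the higher terms.} After Step 1 one has $W = \sum_k a_k b_k + C$ with $C\in\mathfrak{m}^3$, and I would prove by induction on $d\geq 3$ that, up to right-equivalence, $W = \sum_k a_k b_k + W_{\operatorname{red}} + E$, where $W_{\operatorname{red}}$ uses only reduced arrows and $E\in\mathfrak{m}^d$ consists of cyclic terms each containing at least one trivial arrow. For the step I would isolate $E^{(d)}$ and, using cyclic invariance, rotate every term so that a trivial arrow sits in front, writing $E^{(d)} = \sum_k\bigl(a_k u_k + b_k v_k\bigr)$ with $u_k,v_k\in\mathfrak{m}^{d-1}\subseteq\mathfrak{m}^2$. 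Then I would apply the unipotent automorphism (legitimate by Proposition \ref{right-eq}) given by $a_k\mapsto a_k-v_k$, $b_k\mapsto b_k-u_k$ on trivial arrows and the identity on reduced arrows. Since $\sum_k(a_k-v_k)(b_k-u_k) = \sum_k a_k b_k - \sum_k(a_k u_k + v_k b_k) + \sum_k v_k u_k$ and $v_k b_k$ is cyclically equal to $b_k v_k$, this exactly cancels $E^{(d)}$, while the cross terms $v_k u_k$ and all substituted occurrences of trivial arrows inside $E$ land in $\mathfrak{m}^{d+1}$; as $W_{\operatorname{red}}$ involves no trivial arrows it is untouched. Thus the error is pushed into degree $d+1$. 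Because the $d$-th automorphism differs from the identity only in degrees $\geq d-1$, the infinite composition converges in the completed path algebra, yielding $(Q,W)\cong (Q_{\operatorname{triv}},\sum_k a_k b_k)\oplus(Q_{\operatorname{red}},W_{\operatorname{red}})$ with the first summand trivial and the second reduced.

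\textbf{Uniqueness and the main obstacle.} For any right-equivalence $\psi$, the fact that $\psi^{(2)}$ strictly raises degree forces $(\psi W)^{(2)} = \psi^{(1)}(W^{(2)})$, so the rank of the quadratic form $W^{(2)}$ — hence the number $s$ of trivial arrows — is a right-equivalence invariant; this determines $(Q_{\operatorname{triv}},W_{\operatorname{triv}})$ up to isomorphism. For the reduced summand I would show that a right-equivalence between two splittings with standard trivial part can be normalized to preserve $A_{\operatorname{triv}}\oplus A_{\operatorname{red}}$, using that the trivial arrows are pinned down up to higher-order corrections by the degree-$2$ form, and then restrict to the reduced arrows. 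The hard part will be exactly this Step 2 bookkeeping together with the uniqueness of the reduced part: I must verify that the degree-$d$ substitution cancels $E^{(d)}$ without reintroducing lower-degree interaction and that the infinite product of automorphisms genuinely converges — which is where completeness of $R\langle\langle Q\rangle\rangle$ and the triangular structure of Proposition \ref{right-eq} are indispensable — and that an arbitrary right-equivalence, which need not respect the trivial/reduced decomposition, can first be brought into a form compatible with it.
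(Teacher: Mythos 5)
The paper itself contains no proof of Theorem \ref{split}: it is quoted verbatim from \cite{DWZ07}, where it is the Splitting Theorem, so your attempt can only be compared with the argument given there --- which is exactly the strategy you are reconstructing. Your existence half is correct and is essentially their proof: since the standing assumption of this section excludes loops, $W^{(2)}$ is a family of pairings between opposite arrow spaces $A_{i,j}$ and $A_{j,i}$, so the hyperbolic normalization is legitimate; your substitutions $a_k\mapsto a_k-v_k$, $b_k\mapsto b_k-u_k$ do cancel $E^{(d)}$ exactly (the cyclic identification $v_kb_k\sim b_kv_k$ is used correctly), the new garbage lands in $\mathfrak{m}^{d+1}$ because the cross terms $v_ku_k$ have degree $2d-2\geq d+1$ and every substituted occurrence of a trivial arrow raises degree by $d-2\geq 1$, and the infinite composition converges by completeness and the triangular structure of Proposition \ref{right-eq}. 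The rank-invariance argument $(\psi W)^{(2)}=\psi^{(1)}(W^{(2)})$, which determines the trivial summand, is also complete.

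The genuine gap is the uniqueness of the reduced summand, which you defer rather than prove, and it is not a deferrable bookkeeping step: it is the technical heart of the proof in \cite{DWZ07}. Concretely, suppose both splittings are normalized to share the standard trivial part $\sum_k a_kb_k$, and let $\psi$ be a right-equivalence carrying $\sum_k a_kb_k+W_{\operatorname{red}}$ to $\sum_k a_kb_k+W'_{\operatorname{red}}$. The degree-$2$ comparison constrains only $\psi^{(1)}$; nothing constrains $\psi^{(2)}$, which may for instance send $a_1\mapsto a_1+r_1r_2$ and $b_1\mapsto b_1+r_3r_4$ with all $r_i$ \emph{reduced} arrows. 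Then $\psi(a_1b_1)$ contains the purely reduced cycle $r_1r_2r_3r_4$, so if $\pi$ denotes the projection of $R\langle\langle Q\rangle\rangle$ onto $R\langle\langle Q_{\operatorname{red}}\rangle\rangle$ killing trivial arrows, one has $\pi\bigl(\psi\bigl(\sum_k a_kb_k\bigr)\bigr)\neq 0$ in general, and therefore the only obvious candidate for an induced equivalence, $\pi\circ\psi$ restricted to $R\langle\langle Q_{\operatorname{red}}\rangle\rangle$, sends $W_{\operatorname{red}}$ to $W'_{\operatorname{red}}-\pi\bigl(\psi\bigl(\sum_k a_kb_k\bigr)\bigr)$ rather than to $W'_{\operatorname{red}}$. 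So ``restrict to the reduced arrows'' is not yet an argument: one must first correct $\psi$ by a further convergent sequence of automorphisms so that it maps the closed ideal generated by the trivial arrows into itself, while verifying that these corrections preserve the already-split form of the target. Your phrase that the trivial arrows are ``pinned down up to higher-order corrections by the degree-$2$ form'' is exactly the issue inverted: they are pinned down only at linear order, and the higher-order ambiguity is precisely what breaks the naive restriction and forces the second limiting process in \cite{DWZ07}.
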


It is often useful to replace a quiver with potential by its reduced part, we will call this procedure \emph{reduction}.

\subsection{Mutation of quivers with potential.}

\begin{figure}
	\begin{minipage}{.33\textwidth}
		\centering
		\captionsetup{width=1.2\textwidth}
\includegraphics{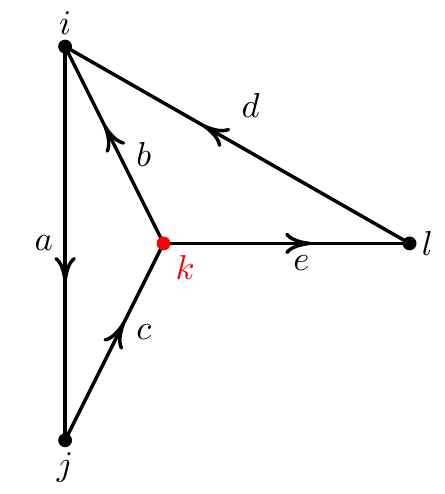}
		\captionof{figure}{$(Q,W=abc)$}
		\label{mutex1}
	\end{minipage}
	\begin{minipage}{.33\textwidth}
		\centering
		\captionsetup{width=1.2\textwidth}
\includegraphics{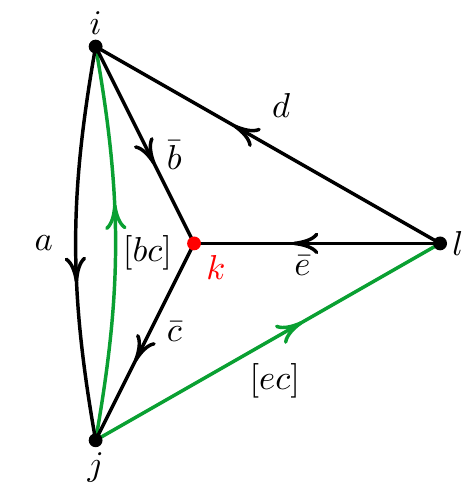}
		\captionof{figure}{$(\widetilde{\mu}_kQ,\widetilde{\mu}_kW)$}
		\label{mutex2}
	\end{minipage}
	\begin{minipage}{.32\textwidth}
		\centering
		\captionsetup{width=1.2\textwidth}
	
\includegraphics{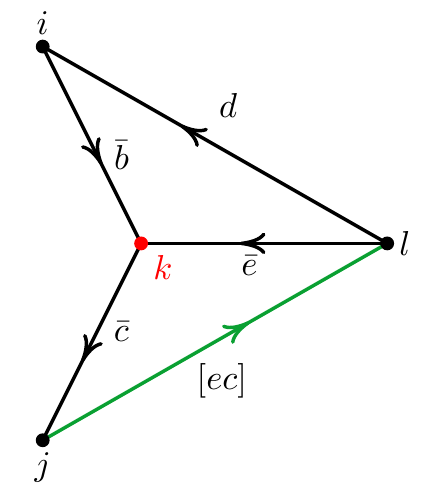}
		\captionof{figure}{$(\mu_kQ,\mu_kW=\bar c \bar e [ec] )$}
		\label{mutex3}
	\end{minipage}
\end{figure}

Throughout this section we assume that quiver $Q$ has no loops and there are no half-arrows. It is known that to every vertex of $Q$ there corresponds a transformation of the quiver called \emph{a mutation}. Remarkably this process can be defined for quivers with potentials too. We briefly recall this story in this section.

Let $k$ be any nonfrozen vertex of $(Q,W)$ that does not belong to a $2$-cycle. Then we are going to produce another quiver with potential denoted by $(\mu_k Q, \mu_k W)$ that is the mutation of $(Q,W)$ at vertex $k$. 
To do so we first construct premutated quiver $(\widetilde{\mu}_k Q,\widetilde{\mu}_k W)$ possibly having $2$-cycles and then we reduce it using Theorem \ref{split}.

Define arrow spaces of the premutated quiver as follows:

\begin{equation}
\widetilde{\mu}_k A_{i,j}=
\begin{cases}
A_{i,j}\oplus A_{i,k}\otimes A_{k,j}& \text {if $i,j\neq$k;}
\\A_{j,i}^* & \text {otherwise.}
\end{cases}
\end{equation}

\noindent Here $A_{i,j} = i\cdot A\cdot j$ denotes the vector space of arrows from vertex $j$ to $i$. One can interpret this operation on the level of quivers in simple combinatorial terms: in the first case we add new shortcut arrow called $[ab]$ for every $2$-step path $ab$ in $Q$ passing through vertex $k$, in the second case we reverse all arrows adjacent to vertex $k$. Let is indicate this by putting bars on the top, e.g. $\bar a$ stands for reversed arrow $a$.

Now we describe the effect of mutation on the potential. Denote by $[W]$ the potential for the quiver $\widetilde{\mu}_k Q$ obtained by replacing all occurrences of 2-step paths through $k$ by corresponding shortcuts. Then by definition:

\begin{equation}
 \widetilde{\mu}_k W=[W]+\sum_{a,b:\ s(a)=t(b)=k} \bar{b}\bar{a}[ab]
\end{equation}

\noindent The sum over $2$-step paths through $k$ corresponds to triangular terms that are formed by new shortcut arrows $[ab]$ and reversed arrows $\bar a,\ \bar b$. Note that these terms are canonical elements in $ A_{k,s(b)}^*\otimes  A_{t(a),k}^*\otimes A_{t(a),k}\otimes A_{k,s(b)}$. Finally, as mentioned above $\mu_k(Q,W)$ is obtained by removing the trivial part of $(\widetilde{\mu}_k Q,\widetilde{\mu}_k W)$.

\noindent\emph{Example:} In the example in the sequence of Figures \ref{mutex1}-\ref{mutex3} we take potential of the original quiver shown on the left to be $W=abc$. After mutation at vertex $k$, two new shortcut arrows appear in $\widetilde{\mu}_kQ$ (they are shown in green). Furthermore, all arrows attached to $k$ are reversed. We have $$\widetilde{\mu}_kW=[W]+\bar c\bar b[bc]+\bar c\bar e[ec],$$ 
\noindent since $[W]=a[bc]$ and there are two additional triangles corresponding to each shortcut. Finally, to reduce premutated quiver we have to decompose it to the direct sum of its trivial and reduced parts. That can be done as follows: apply right-equivalence $\varphi$ that sends $a\mapsto a -\bar c\bar b$ and leaves other arrows unchanged, then $\varphi(\widetilde{\mu}_kW)=a[bc]+\bar c\bar e[ec]$. Hence the $2$-cycle $a[bc]$ can be split out and it is deleted after reduction. The result of the mutation is shown in Figure \ref{mutex3}, corresponding potential is $\mu_kW = \bar c\bar e[ec]$.

\vspace{5pt}
The following two theorems from \cite{DWZ07} justify that mutation is well-defined on right-equivalence classes of QPs and that mutation acts there as an involution.

\begin{theorem}\label{mucorrect}
	The right-equivalence class of $(\mu_k Q, \mu_k W)$ is determined by the right-equivalence class of $(Q,W)$. 
\end{theorem}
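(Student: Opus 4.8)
The plan is to exploit the factorization of the mutation $\mu_k$ itself into two operations: premutation $(Q,W)\mapsto(\widetilde\mu_k Q,\widetilde\mu_k W)$, followed by reduction, i.e.\ splitting off the trivial summand via Theorem \ref{split}. The reduction step already respects right-equivalence, because Theorem \ref{split} asserts that the right-equivalence class of the reduced part of a quiver with potential is determined by the right-equivalence class of the whole. Hence, once I know that premutation sends right-equivalent quivers with potential to right-equivalent ones, the conclusion for $\mu_k$ follows immediately by applying the reduction step to both $(\widetilde\mu_k Q,\widetilde\mu_k W)$ and $(\widetilde\mu_k Q',\widetilde\mu_k W')$. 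So the whole problem reduces to the compatibility of \emph{premutation} with right-equivalence.

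To establish that, fix a right-equivalence $\varphi\colon R\langle\langle Q\rangle\rangle\to R\langle\langle Q'\rangle\rangle$ with $\varphi(W)=W'$, and attempt to lift it to a right-equivalence $\widetilde\varphi\colon R\langle\langle\widetilde\mu_k Q\rangle\rangle\to R\langle\langle\widetilde\mu_k Q'\rangle\rangle$ satisfying $\widetilde\varphi(\widetilde\mu_k W)=\widetilde\mu_k W'$. By Proposition \ref{right-eq}, $\varphi$ is recorded by its degree-one part $\varphi^{(1)}\colon A\to A'$ (a bimodule isomorphism) and a higher part $\varphi^{(2)}\colon A\to\mathfrak{m}(Q')^2$. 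Since $\varphi^{(1)}$ is invertible, I would first strip it off: composing with the automorphism induced by $(\varphi^{(1)})^{-1}$ factors $\varphi$ as a pure relabeling of arrows followed by a \emph{unitriangular} map, one whose degree-one part is the identity. It therefore suffices to treat these two kinds of factors separately.

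The relabeling factor is routine. The arrow spaces of $\widetilde\mu_k Q$ are built from those of $Q$ by direct sums, tensor products and duals, and the coupling term $\sum_{a,b:\,s(a)=t(b)=k}\bar b\,\bar a\,[ab]$ is the canonical pairing element of $A^*_{k,s(b)}\otimes A^*_{t(a),k}\otimes A_{t(a),k}\otimes A_{k,s(b)}$; a linear isomorphism of arrow spaces thus lifts tautologically, preserves this canonical term, and preserves $[W]$ because the bracket operation is natural. The genuine content, and the main obstacle, is the unitriangular factor $\varphi(a)=a+\varphi^{(2)}(a)$ with $\varphi^{(2)}(a)$ of path-length at least two. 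Here I would construct the lift by leaving the reversed arrows $\bar a,\bar b$ fixed and defining $\widetilde\varphi$ on the remaining (non-$k$ and shortcut) arrows by transporting $\varphi$ through the bracket operation, re-expressing every $2$-step path through $k$ produced by the substitution as a shortcut variable. The hard part is the verification $\widetilde\varphi(\widetilde\mu_k W)=\widetilde\mu_k W'$: one must check that the image of $[W]$ reproduces $[W']$ modulo terms that can be absorbed by a further correction of $\widetilde\varphi$ along the shortcut arrows, while the canonical coupling term is preserved. This is pure bookkeeping, but it is delicate, because a single passage of $W$ through $k$ can, after substituting $\varphi(a)=a+\varphi^{(2)}(a)$ into arrows incident to $k$, generate paths winding through $k$ several times, and all of these must be reorganized consistently into shortcut variables. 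To keep this under control I would work throughout in the completed path algebra and argue convergence and well-definedness of the correction by the $\mathfrak{m}$-adic filtration, which guarantees that the higher-order contributions form a Cauchy sequence. Assembling the relabeling and unitriangular factors gives premutation-compatibility, and combining it with the reduction step via Theorem \ref{split} proves the theorem.
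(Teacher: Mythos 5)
First, a remark on the comparison itself: the paper contains no proof of Theorem \ref{mucorrect} — it is quoted from \cite{DWZ07} — so your proposal can only be measured against the Derksen--Weyman--Zelevinsky argument. Your skeleton is exactly theirs: factor $\mu_k$ as premutation followed by reduction, dispose of the reduction step via Theorem \ref{split}, decompose an arbitrary right-equivalence into a change of arrows composed with a unitriangular automorphism, and lift each factor to the premutated quiver. Up to that point the proposal is sound, and the relabeling factor is indeed routine (the reversed arrows transform contragrediently, which preserves the canonical coupling term).

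The gap is in the unitriangular factor, and it is not ``pure bookkeeping'': your sketch locates the difficulty in the wrong place and proposes a correction that cannot work. Write $\Delta = \sum_{a,b:\, s(a)=t(b)=k}\bar b\,\bar a\,[ab]$ for the coupling term. With your definition ($\widetilde\varphi(\bar a)=\bar a$, $\widetilde\varphi(\bar b)=\bar b$, $\widetilde\varphi(c)=[\varphi(c)]$ for $c$ not incident to $k$, and $\widetilde\varphi([ab])=[\varphi(a)\varphi(b)]$), multiplicativity of the bracket at non-$k$ junctions gives $\widetilde\varphi([W])=[\varphi(W)]$, which is already cyclically equivalent to $[W']$ — no correction is needed on that side. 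What fails is precisely the part you claim is preserved: the coupling term goes to $\sum_{a,b}\bar b\,\bar a\,[\varphi(a)\varphi(b)] = \Delta + \sum_{a,b}\bar b\,\bar a\bigl([\varphi(a)\varphi(b)]-[ab]\bigr)$, and this error cannot be absorbed by adjusting the shortcut arrows: replacing $\widetilde\varphi([ab])$ by $[\varphi(a)\varphi(b)]+\delta_{ab}$ changes the coupling term by $\sum_{a,b}\bar b\,\bar a\,\delta_{ab}$, so killing the error forces $\delta_{ab}=[ab]-[\varphi(a)\varphi(b)]$, i.e.\ $\widetilde\varphi([ab])=[ab]$, which in turn destroys the property $\widetilde\varphi([W])\sim[W']$; the two requirements are incompatible if one only modifies shortcuts. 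The missing idea — the heart of the proof in \cite{DWZ07} — is that the correction must go on the \emph{reversed} arrows. Decompose $\varphi(a)=\sum_{a'}u_{aa'}a'$ along the first arrow leaving $k$ and $\varphi(b)=\sum_{b'}b'v_{b'b}$ along the last arrow entering $k$; the matrices $[U]=\bigl([u_{aa'}]\bigr)$ and $[V]=\bigl([v_{b'b}]\bigr)$ have entries in the completed path algebra of $\widetilde{\mu}_kQ$ and equal the identity plus positive-degree terms, hence are invertible there. Setting $\widetilde\varphi(\bar a)=\sum_{a'}\bar a'\,([U]^{-1})_{a'a}$ and $\widetilde\varphi(\bar b)=\sum_{b'}([V]^{-1})_{bb'}\,\bar b'$, a single cyclic rotation collapses the sums and sends the coupling term exactly to $\Delta$ (up to cyclic equivalence), while $[W]\mapsto[W']$ is untouched since reversed arrows do not occur in $[W]$. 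Without this dual-arrow correction your lift simply does not send $\widetilde{\mu}_kW$ to $\widetilde{\mu}_kW'$, so the central step of the theorem is not established.
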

\begin{theorem}\label{m^2}
	The correspondence $\mu_k: (Q,W)\longrightarrow (\mu_kQ,\mu_kW)$ acts as an involution on the set of right-equivalence classes of reduced quivers with potentials.
\end{theorem}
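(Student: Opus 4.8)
The plan is to deduce the statement from the single identity
\[
\widetilde{\mu}_k^2(Q,W)\;\cong\;(Q,W)\oplus(\mathsf T),
\]
valid for \emph{any} $(Q,W)$, where $\mathsf T$ is a trivial quiver with potential, and then to descend this premutation identity to the reduced mutation $\mu_k=\mathrm{red}\circ\widetilde{\mu}_k$ using the Splitting Theorem (Theorem \ref{split}) together with the compatibility of $\widetilde\mu_k$ with right-equivalences (the fact underlying Theorem \ref{mucorrect}). At the level of the underlying quiver the involution $\mu_k^2 Q\cong Q$ is classical, so all the content is in tracking the potential. Throughout, write $c$ for the arrows into $k$, $d$ for the arrows out of $k$, and $dc$ for a $2$-step path through $k$.

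First I would compute $\widetilde{\mu}_k^2$ explicitly. Applying the premutation twice at $k$ reverses every arrow incident to $k$ twice, hence \emph{restores} the arrows $c,d$; it also produces two opposite families of shortcuts, namely $[dc]$ from the first premutation and $[\bar c\,\bar d]$ from the second, which pair up into $2$-cycles. Feeding the first-premutation potential $\widetilde{\mu}_kW=[W]+\sum_{c,d}\bar c\,\bar d\,[dc]$ into the definition a second time (replacing the new $2$-step paths $\bar c\,\bar d$ through $k$ by $[\bar c\,\bar d]$ and reversing $\bar c,\bar d$ back to $c,d$) yields
\[
\widetilde{\mu}_k^2 W\;=\;[W]\;+\;\sum_{c,d}[\bar c\,\bar d][dc]\;+\;\sum_{c,d} dc\,[\bar c\,\bar d].
\]

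Next I would exhibit the trivial summand by an explicit change of variables. Let $\varphi$ send $[dc]\mapsto [dc]-dc$ and fix every other arrow; by Proposition \ref{right-eq} this is an automorphism since its linear part is the identity. Working modulo cyclic shifts, $\varphi$ converts the last two sums into the single clean $2$-cycle potential $\sum_{c,d}[\bar c\,\bar d][dc]$, in which the new shortcuts $[\bar c\,\bar d]$ now occur nowhere else. By Theorem \ref{split} these $2$-cycles split off as a trivial direct summand $\mathsf T$; discarding it, the reduced potential is $[W]$ with each $[dc]$ eliminated via its $2$-cycle, i.e. $W$ with every $2$-step path $dc$ through $k$ replaced by $-dc$. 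A final rescaling $d\mapsto -d$ of the arrows out of $k$ removes the sign twist and recovers $W$, proving the displayed premutation identity. To finish, I would descend: writing $\widetilde{\mu}_k(Q,W)\cong\mu_k(Q,W)\oplus\mathsf T_1$ by Theorem \ref{split}, applying $\widetilde{\mu}_k$, and using its compatibility with right-equivalences and with direct summands carrying no arrows at $k$, one obtains $\widetilde{\mu}_k^2(Q,W)\cong\widetilde{\mu}_k\mu_k(Q,W)\oplus\mathsf T_1$; taking reduced parts (well-defined by Theorem \ref{split}) and comparing with the identity above gives $\mu_k^2(Q,W)\cong(Q,W)$ for reduced $(Q,W)$.

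The main obstacle is the rigorous execution of the splitting step. Because the $2$-cycle arrow $[dc]$ also appears inside $[W]$ (a degree-$3$ cycle of $W$ through $k$ becomes a genuine-looking $2$-cycle once $dc$ is shortened to $[dc]$), removing the trivial summand is not a one-step substitution but the convergent iterative change of variables of the Splitting Theorem, and one must verify that it leaves \emph{precisely} $W$ (up to the controllable sign twist) rather than merely something right-equivalent to it; this is where the bookkeeping is heaviest. A secondary technical care is checking that the trivial summands produced by reduction involve only shortcut arrows and hence carry no arrows incident to $k$, so that $\widetilde{\mu}_k$ indeed passes through them in the descent step.
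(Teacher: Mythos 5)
The paper does not prove this theorem at all: it is quoted verbatim from \cite{DWZ07} (Theorem 5.7 there), so there is no internal proof to compare against. Your argument is correct and is essentially Derksen--Weyman--Zelevinsky's original one: compute $\widetilde{\mu}_k^2W=[W]+\sum[\bar c\,\bar d][dc]+\sum dc\,[\bar c\,\bar d]$, cancel the last two sums by the substitution $[dc]\mapsto[dc]-dc$, eliminate the remaining occurrences of the shortcuts via the Splitting Theorem (Theorem \ref{split}) so that the reduced part is $W$ with each passage through $k$ negated, undo the sign by rescaling the arrows out of $k$, and then descend from $\widetilde{\mu}_k^2(Q,W)\cong(Q,W)\oplus\mathsf T$ to $\mu_k^2(Q,W)\cong(Q,W)$ using uniqueness of the reduced part --- and you correctly flag the two genuine technical points (the $[dc]e$ two-cycles coming from $3$-cycles of $W$ through $k$, and that the trivial summand carries no arrows incident to $k$, which is what lets $\widetilde{\mu}_k$ pass through the direct sum).
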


\section{Quivers with potentials from words of the free semigroup $\MM_\Gamma$.}
In this section we describe the class of quivers of our interest. Particular examples of such quivers correspond to cluster structures on Poisson leaves of Poisson-Lie groups. Our quivers are naturally obtained by gluing certain elementary pieces, this approach called \emph{amalgamation} was developed by V. Fock and A. Goncharov in \cite{FG05}.
 
\subsection{Amalgamation of quivers.}

Here we review amalgamation technique from \cite{FG05}.

Let $(Q^p)_{p \in P} = (I^p,I_0^p, E^p,E_0^p,s^p, t^p)_{p\in P}$ be a collection of quivers labelled by a finite set $P$. In order to amalgamate them we need to fix \emph{gluing data} $\mathcal{P}$ that consists of a set $I^{\mathcal{P}}$ and a collection of maps $\iota^p:I^p\hookrightarrow I^{\mathcal{P}}$ that cover $I^{\mathcal{P}}$. If some $k\in I^{\mathcal{P}}$ is covered by more than one $\iota^r$ then all its preimages are required to be frozen.

Assume that all quivers have no $2$-cycles, then arrows in $Q^p$ can be uniquely described by skew-symmetric adjacency matrix whose entries are half-integers $\varepsilon^r_{i,j},\ i,j \in I^p$ equal to the number of arrows from $j$ to $i$ minus the number of arrows from $i$ to $j$  in $Q^p$  (half-arrows are taken with weight $\frac 12$).

\begin{definition} \textbf{The amalgamation} of the collection $(Q^p)_{p\in P}$ corresponding to gluing data  $\mathcal{P}$ is the new quiver $Q^\mathcal{P}$ with the set of vertices $I^{\mathcal{P}}$ and the adjacency matrix given by 
$$
\varepsilon^\mathcal{P}_{i,j} = \sum\varepsilon^p_{i',j'}
$$
\noindent where the sum is taken over the set of pairs $\{(i',j')\mid i',\ j'\in I^p,\ \iota^p(i') = i,\  \iota^p(j') = j\}$. 
\end{definition}

\noindent In other words, we take all arrows whose endpoints are mapped to vertices $i$ and $j$ in $I^{\mathcal{P}}$and compute the resulting number of arrows taken with appropriate signs and weights.
The set of frozen vertices $I^\mathcal{P}_0\subset I^\mathcal{P}$ is defined as the union of images of frozen vertices. After amalgamation it may happen that there are frozen vertices with no adjacent half-arrows. If this is the case we can defrost them, i.e. exclude them from $I^\mathcal{P}_0$. Since it is usually evident from the context what arrows can be defrosted we usually omit these details.


\subsection{Construction of quivers $Q(x)$.}

\begin{wrapfigure}{l}{0.4\textwidth}
  \vspace{-20pt}
  \centering
	\includegraphics{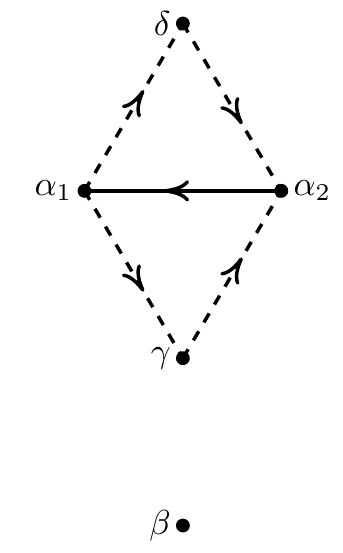}	
	 \vspace{-10pt}
	\captionof{figure}{Elementary~quiver}
	\label{elementary}
  \vspace{-30pt}
\end{wrapfigure}
In this section a class of quivers generalizing \cite{FG05} is defined. Let $\Gamma$ be a graph with no loops and at most one edge connecting any two vertices, denote its set of vertices by $\Pi$. In geometric applications related to simply laced Poisson-Lie groups and loop groups this graph is just the corresponding Dynkin diagram or its extended version and elements of $\Pi$ are identified with simple roots. We define a free semigroup $\MM_\Gamma$ and construct a collection of quivers with potential $(Q(x),W(x))$ associated to every element $x\in \MM_\Gamma$. These quivers admit a number of mutations that preserve the collection. Such mutations are related to braid relations exchanging generators of the free semigroup.

\vspace{5pt}
The semigroup $\MM_\Gamma$ is generated by letters $\{s_\alpha, s_{\bar{\al}}\mid \alpha\in\Pi\}$. Denote by $C$ the associated generalized Cartan matrix. Its entries are defined as follows:

\begin{equation}
\begin{cases}
C_{\al\be} = -1 \text{, if } \al \text{ and } \be \text{ are connected by an edge in } \Gamma;\\
C_{\al\be} = 2 \text{, if } \al=\be;\\
C_{\al\be} = 0 \text{, otherwise.}
\end{cases}
\end{equation}

\noindent Braid semigroup $\mathfrak{B}_\Gamma$ is the quotient of $\MM_\gamma$ by relations of three types:

\begin{figure}[h!]
\begin{minipage}{0.8\textwidth}
\begin{enumerate}[label=(\roman*)]
	\item \label{braid_rel1}$s_\alpha s_\beta=s_\beta s_\alpha,\  s_{\bar{\al}}s_{\bar{\be}}=s_{\bar{\be}}s_{\bar{\al}}\ \text{if} \ \ C_{\alpha\beta}=0$
	\item \label{braid_rel2}$s_\alpha s_\beta s_\alpha=s_\beta s_\alpha s_\beta,\ s_{\bar\alpha} s_{\bar\beta} s_{\bar\alpha}=s_{\bar\beta} s_{\bar\alpha} s_{\bar\beta}\ \text{if}\ \ C_{\alpha\beta}=-1$
	\item \label{braid_rel3}$s_\al s_{\bar{\be}}=s_{\bar{\be}}s_\al$
\end{enumerate}
\end{minipage}
\begin{minipage}{0.19\textwidth}
\begin{equation}\label{braid_rels}
\end{equation}
\end{minipage}
\end{figure}

In \cite{FG05} a quiver $Q(x)$ was associated to any word $x\in \MM_D$ (where $D$ is arbitrary Dynkin diagram). Moreover, it is shown there that if two words differ by relation of the first type then corresponding quivers are either identical or differ by explicit mutation. 
It follows from this construction that one can associate cluster variety to any element of the braid semigroup $\BB_D$. Our main goal is to extend this construction to quivers with potentials.
\medskip

Take an element $x = s_{i_1}s_{i_2}\ldots s_{i_k} \in \MM_\Gamma$  (subindices $i_p$ can be $\al$ or $\bar{\al}$).
To construct a general quiver $Q(x)$ we first associate a quiver $Q(s_\al)$ 
 to every generator $s_\al\in \MM_\Gamma$, such quivers will be called \emph{elementary}. After that we describe $Q(x)$ as amalgamation of elementary quivers $(Q(s_{i_p}))_{p=1\ldots k}$.
 
For any $\al\in\Pi$ the set of vertices of elementary quiver $Q(s_\al)$  is identified with $I_\al  = (\Pi - \al)\cup \al'\cup \al''$. There are no arrows between $i,j\in I_\al$ unless one of the vertices is $\al'$ or $\al''$, in the latter case the corresponding entry in the adjacency matrix $\varepsilon(\al)$ is defined by formulas below (see Fig. \ref{elementary}).  Elementary quiver $Q_{s_{\bar\al}}$ is obtained by reversing all arrows in~$Q_{s_\al}$.

\begin{equation}
\varepsilon(\al)_{\al'\al''} = -1,\ \ \varepsilon(\al)_{\al'\be} = \frac{C_{\al\be}}2,\ \ \varepsilon(\al)_{\al''\be} = -\frac{C_{\al\be}}2,\ \be\in (\Pi - \al).
\end{equation}

\medskip
In order to specify the gluing data for the collection $(Q(s_{i_j}))_{j =1\ldots k}$ we define the set of vertices of amalgamated quiver and introduce suitable notation on the way. Let $n_\al$ be the number of letters $s_\al$ and $s_{\bar{\al}}$ in the expression for $x$. Then the set of vertices of the amalgamated quiver $Q(x)$ is by definition identified with the set of pairs

\begin{equation}
I_x=\bigg\{{\al \choose r}\Big|\al\in \Pi, 0\leq r \leq n_\al\bigg\},
\end{equation}

\noindent where ${\al \choose i}$ is just our a notation for the corresponding pair. It is convenient to use the following terminology:

\begin{definition} \textbf{Decoration} on the set of vertices of $Q(x)$ or $Q(\widetilde x)$ is the map from $I_x$ to $\Pi$ that sends vertex ${\al \choose r}$ to $\al\in \Pi$. We say that a vertex of the quiver is decorated by $\al$ if its image under the decoration map is $\al$. The set of all vertices decorated by one $\al\in\Pi$ form a \textbf{perch}, and the number of vertices on $\al$-perch in $Q(\widetilde x)$ is $n_\al$.
\end{definition}

To describe embeddings of vertex sets of elementary quivers  $Q(s_{i_j})$ to $I_x$ in the gluing data, we introduce additional indexing on letters of $x$. Write $s^{(r)}_{\al}$ (resp. $s^{(r)}_{\bar\al}$) for $r$-th occurrence of $s_{\al}$ or $s_{\bar{\al}}$ in $x$. For instance, the word $s_\al s_\al s_{\bar{\al}} s_{\bar{\be}} s_\al s_\be$ acquires indices $s_\al^{(1)} s_\al^{(2)}s_{\bar{\al}}^{(3)}s_{\bar{\be}}^{(1)} s_\al^{(4)} s_\gamma^{(1)}s_\be^{(2)}$, here letters $s_\al,s_{\bar{\al}}$ are numbered from $1$ to $4$; letters $s_\be,s_{\bar{\be}}$ are numbered from $1$ to $2$; and there is a single letter $s_\gamma$.

Now consider some quiver $Q(s_\al^{(r)})$, as mentioned above its vertex set is $(\Pi-\{\al\})\cup\{\al',\al''\}$. Corresponding embedding $I_{s_\al^{(r)}} \longrightarrow I_x$ maps $\al'$ and $\al''$ to ${\alpha \choose r-1}, {\alpha \choose r}$. Any other vertex $\be\neq \al',\al''$ in $Q(s_\al^{(r)})$ is mapped to ${\be \choose j}$, where $j$ is the number of letters ${s_\be}$ or $s_{\bar\be}$ to the left of $s_\al^{(r)}$, or $j=0$ if there are no such letters. The procedure for elementary quivers for opposite letters $s_{\bar\al}$ goes along same lines.

\begin{figure}
	\centering
	\begin{minipage}{.5\textwidth}
		\centering
		\captionsetup{width=1.2\textwidth}
		\includegraphics{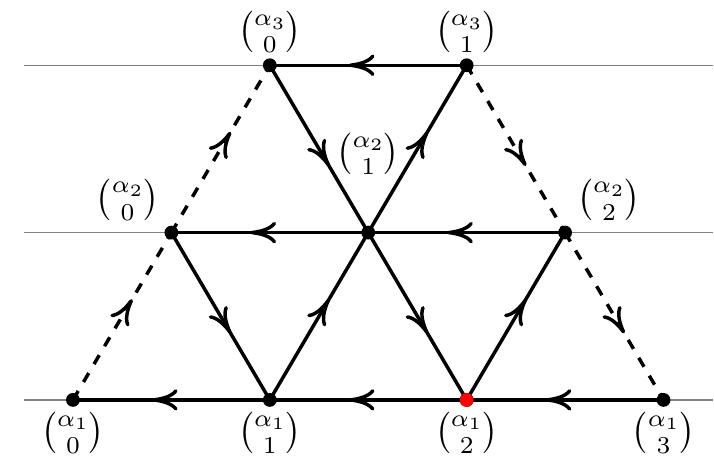}
		\captionof{figure}{$Q_x$ for $x = s_{\al_1}s_{\al_2}s_{\al_3}s_{\al_1}s_{\al_2}s_{\al_1}$}
		\label{A3long}
	\end{minipage}%
	\begin{minipage}{.5\textwidth}
		\centering
		\captionsetup{width=1.2\textwidth}
		\includegraphics{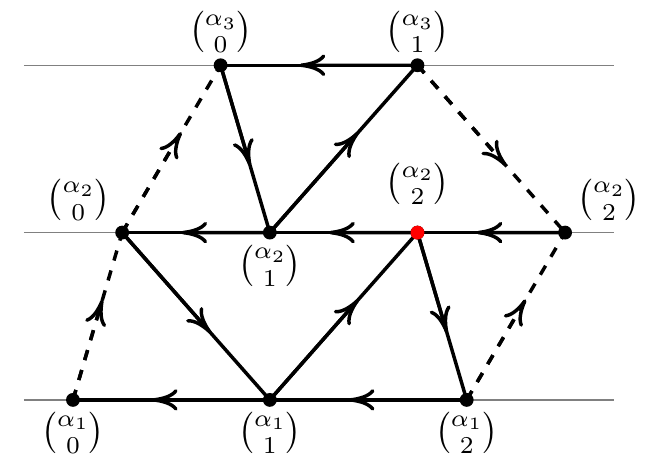}
		\captionof{figure}{$Q_y$ for $y = s_{\al_1}s_{\al_2}s_{\al_3}s_{\al_2}s_{\al_1}s_{\al_2}$}
		\label{mA3long}
	\end{minipage}
\end{figure}

\medskip
\noindent \emph{Example:} In Figures \ref{A3long},\ref{mA3long} graph $\Gamma$ is Dynkin diagram $A_3$, and $x, y\in \MM_{A_3}$ are two lifts of the longest element of the Weyl group: $x  = s_{\al_1}s_{\al_2}s_{\al_3}s_{\al_1}s_{\al_2}s_{\al_1}$, $y = s_{\al_1}s_{\al_2}s_{\al_3}s_{\al_2}s_{\al_1}s_{\al_2}$. Informally, to obtain $Q(x)$ or $Q(y)$ one can imagine that all elementary quivers $Q_{s_i}$ are placed on parallel perches labelled by elements  of $\Gamma$ in the order of their occurrence in the word (see Fig. \ref{A3long_split},\ref{mA3long_split}). We say that perches labelled by $\al,\be\in \Gamma$ are neighbors if $C_{\al\be}=-1$. After elementary quivers are placed on perches, we see that there are horizontal arrows, one for each letter in $x$, and various half-arrows joining neighboring perches. Horizontal arrows lie on perches and go from right to left for generators $s_\al$ and in the opposite direction for generators $s_{\bar{\al}}$. To obtain $Q_x$ compress the picture horizontally, then vertices lying on the same perch and not separated by a horizontal arrow collapse together,  and all half-arrows glue together or annihilate each other in a natural way. 
Note how labelled vertices in previous two figures collapsed pairwise in resulting quivers, also one sees that the interior of amalgamated quiver has only full arrows.

\begin{figure}
	\centering
	\includegraphics{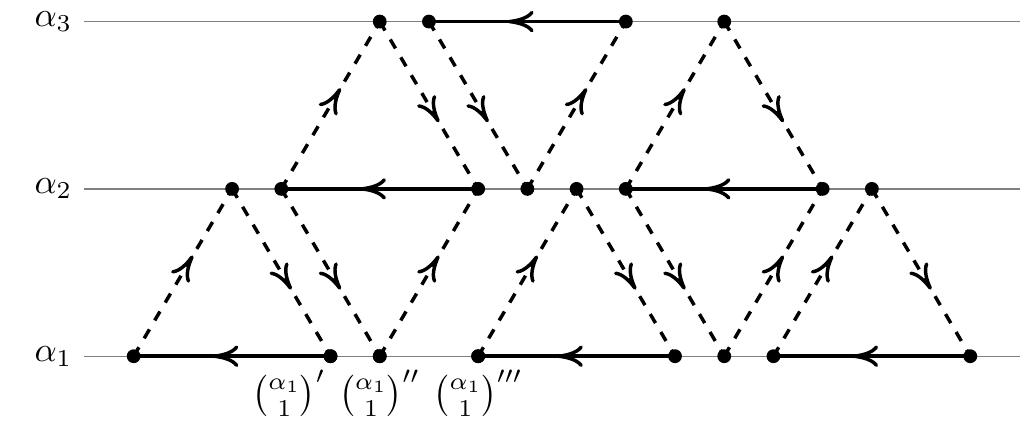}
	\captionof{figure}{$x = s_{\al_1}s_{\al_2}s_{\al_3}s_{\al_1}s_{\al_2}s_{\al_1}$}
	\label{A3long_split}
\end{figure}
\begin{figure}
	\centering
	\includegraphics{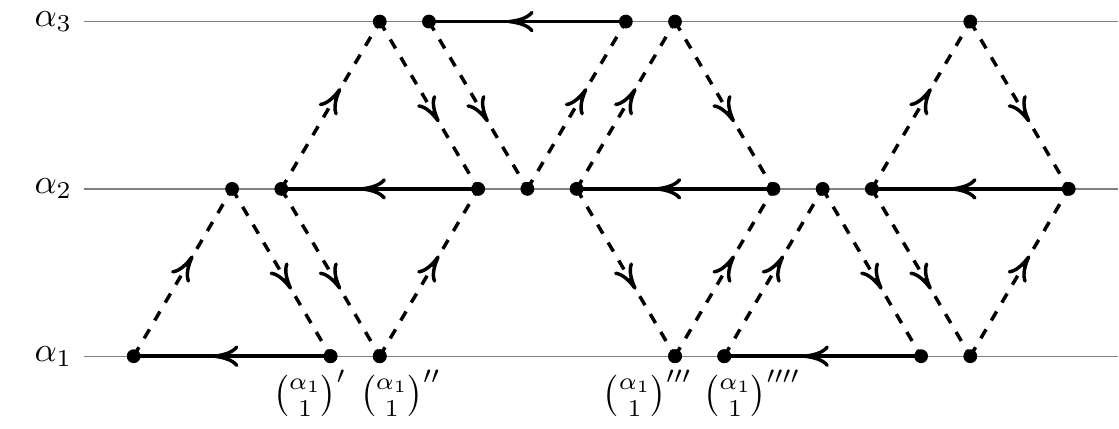}
	\captionof{figure}{$y = s_{\al_1}s_{\al_2}s_{\al_3}s_{\al_2}s_{\al_1}s_{\al_2}$}
	\label{mA3long_split}
\end{figure}

In fact, this description works more generally for any product $xy$ in $\MM_D$. Assuming that we already have $Q_x$ and $Q_y$, we obtain $Q_{xy}$ by placing two parts on perches as before and then for every perch identifying rightmost vertex of $Q_x$ with leftmost vertex of $Q_y$. There is an example of that in Figures \ref{amalg_split},\ref{amalg}.

\begin{figure}
	\centering
	\begin{minipage}{.5\textwidth}
		\centering
		\captionsetup{width=1\textwidth}
\includegraphics{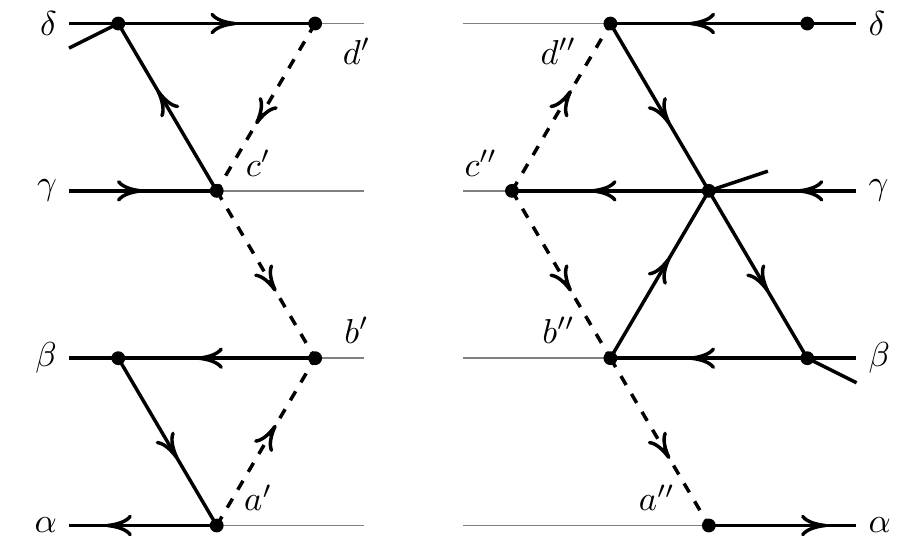}
		\captionof{figure}{$Q_x$ and $Q_y$ placed on perches}
		\label{amalg_split}
	\end{minipage}%
	\begin{minipage}{.5\textwidth}
		\centering
	\includegraphics{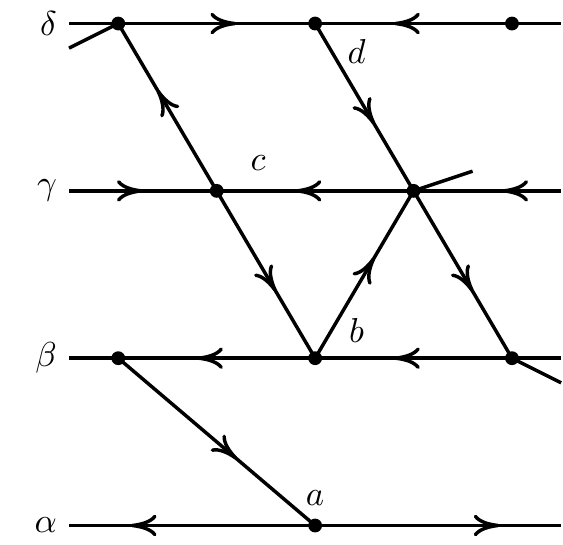}
		\captionof{figure}{$Q_{xy}$}
		\label{amalg}
	\end{minipage}
\end{figure}

\medskip
When $\Gamma$ is a simply laced Dynkin diagram these quivers describe cluster structure on double Bruhat cells of the corresponding group. In order to pass to quotient by adjoint action of Cartan torus $H$ one has to identify vertices ${\al \choose 0}$ and $\al \choose \ n_\al$ for all $\al\in \Gamma$ thus reducing number of vertices by $\dim H$. This motivates our next definition of $Q(\widetilde{x})$. The vertex set is identified with:

\begin{equation}
I_{\widetilde x}=\bigg\{{\al \choose r}\Big|\al\in \Gamma, 0\leq r \leq n_\al-1 \bigg\},
\end{equation}

\noindent and embeddings for the gluing data are defined as before except for the quivers $Q(s^{n_\al}_\al)$, for which vertices $\al'$ and $\al''$ are mapped to ${\alpha \choose n_\al-1}, {\alpha \choose 0}$.

It is immediate from these definitions that if $x$ and $x'$ are cyclic permutations of each other, then quivers $Q(\widetilde x)$ and $Q(\widetilde{x'})$ are identical. Indeed, it is convenient to think of $Q(\widetilde x)$ as being a cyclic closure of the quiver $Q(x)$. Note that $Q(x)$ always has some half-arrows on the boundary, but its cyclic cousin has only ordinary ``full'' arrows.

\medskip
Recall relations (i)-(iii) for the semigroup $\BB_\Gamma$. Next proposition is taken from \cite{FG05} and relates quivers $Q(x)$ (or $Q(\widetilde x)$) corresponding to different lifts an element $b\in\BB_\Gamma$ to $\MM_{\Gamma}$ (see e.g Fig. \ref{A3long},\ref{mA3long}). Its proof and more detailed statement is revealed in subsequent lemmas.

\begin{proposition}\label{braid_mutations}
Let $x$ and $y$ be two lifts of $b\in\BB_\Gamma$ to $\MM_\Gamma$. Then $Q(x)$ (resp. $Q(\widetilde x)$) is mutation equivalent to $Q(y)$ (resp. $Q(\widetilde y)$).
\end{proposition}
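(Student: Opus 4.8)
The plan is to reduce the statement to a single application of each defining relation of $\BB_\Gamma$ and then to exhibit, case by case, the explicit mutation that realizes it. Mutation-equivalence is, by definition, the equivalence relation generated by the mutation moves (which are involutive, cf. Theorem \ref{m^2}), so it is reflexive, symmetric and transitive. Since $\BB_\Gamma$ is the quotient of $\MM_\Gamma$ by the relations \ref{braid_rel1}--\ref{braid_rel3}, two lifts $x$ and $y$ of the same $b$ are joined by a finite chain $x=x_0,x_1,\dots,x_m=y$ in which each $x_{i+1}$ is obtained from $x_i$ by applying one of these relations to a consecutive subword. By transitivity it therefore suffices to treat the case where $x$ and $y$ differ by a single such move. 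Moreover, because mutation of a non-frozen vertex only alters arrows incident to that vertex, and because amalgamation glues elementary pieces along frozen boundary vertices, the change is localized to the two perches carrying the letters involved; the rest of the quiver is glued on unchanged, and only quiver mutation (not the potential) is needed here. Thus I may analyze the move on the minimal configuration and transport the conclusion to the ambient quiver.

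First I would dispose of the commutation relations \ref{braid_rel1} and \ref{braid_rel3}. For \ref{braid_rel1} the condition $C_{\al\be}=0$ forces $\al\neq\be$ and non-adjacency, so the elementary quivers $Q(s_\al)$ and $Q(s_\be)$ contribute no common arrows and the amalgamated adjacency matrix is insensitive to the order of the two letters; hence $Q(x)=Q(y)$ on the nose, after possibly defrosting vertices that lose all incident half-arrows. Relation \ref{braid_rel3} swaps a barred and an unbarred letter, whose horizontal arrows run in opposite directions along a perch; comparing the summed contributions to the adjacency matrix $\varepsilon$ shows that the two quivers again either coincide or, in the subcase where the letters meet on a common or adjacent perch, differ by a single explicit mutation computed exactly as in the braid case below. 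In either event the move is realized by at most one mutation.

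The substantive case is the braid relation \ref{braid_rel2}, namely $s_\al s_\be s_\al=s_\be s_\al s_\be$ with $C_{\al\be}=-1$. Writing $x=\cdots s_\al^{(r)} s_\be s_\al^{(r+1)}\cdots$, I claim that $Q(y)$ is obtained from $Q(x)$ by the single mutation $\mu_k$ at the interior vertex $k={\al \choose r}$ sandwiched between the two occurrences of $s_\al$ on the $\al$-perch. Note first that the numbers of perch vertices are exchanged (three on the $\al$-perch and two on the $\be$-perch for $x$, versus two and three for $y$), so the natural identification of $I_x$ with $I_y$ re-decorates precisely the vertex $k$ from the $\al$-perch to the $\be$-perch; mutation fixes the underlying vertex set while this relabeling accounts for the shift. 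I would then compute $\mu_k$ directly on the local configuration of the two adjacent perches --- the three horizontal arrows together with the weight-$\tfrac12$ half-arrows and full arrows prescribed by $\varepsilon(\al),\varepsilon(\be)$ --- introducing the shortcut arrows through $k$, reversing the arrows at $k$, and checking that the surviving quiver matches the amalgamated description of $Q(y)$ under the re-decoration. The example of Figures \ref{A3long} and \ref{mA3long}, where the mutation is performed at the middle vertex of the $\al_1$-perch, is the model for this computation.

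Finally, the cyclic versions $Q(\widetilde x)$ and $Q(\widetilde y)$ are handled by the same moves. Since $Q(\widetilde x)$ is the cyclic closure of $Q(x)$ and is unchanged under cyclic permutation of the word, I may first rotate $x$ so that the subword to which the relation applies lies in the interior, perform the interior mutation established above, and rotate back; the only additional point is to confirm that the local computation is unaffected by the wrap-around identification of boundary vertices, which holds because the mutated vertex $k$ is interior to the affected perches. The main obstacle throughout is the explicit verification in the braid case: correctly tracking orientations, the half-arrows on the glued boundary, and the shortcut/reversal rule of mutation, so that all spurious $2$-cycles cancel under reduction and the re-decorated quiver is exactly $Q(y)$.
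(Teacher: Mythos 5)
Your overall architecture is the same as the paper's: reduce to a single application of one defining relation, show the commutation relations leave the quiver unchanged, and realize the braid relation $s_\al s_\be s_\al \leftrightarrow s_\be s_\al s_\be$ as the mutation at the sandwiched vertex ${\al\choose r}$, with the re-decoration of that vertex to the $\be$-perch and the relabeling of the later vertices (this is the paper's Lemma \ref{lemma_ab} and Lemma \ref{lemma_aba}); the passage to the cyclic closures $Q(\widetilde x)$, $Q(\widetilde y)$ by rotating the word is also consistent with how the paper treats them.

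The genuine gap is in your handling of relation \ref{braid_rel3}, and it is twofold. First, for $\al\neq\be$ --- \emph{including} the adjacent case $C_{\al\be}=-1$ --- the quivers $Q(x)$ and $Q(y)$ are literally identical: the half-arrow contributions of $Q(s_\al)$ and $Q(s_{\bar\be})$ to the adjacency matrix sum to the same totals in either order, so no mutation occurs (this is exactly the content of the paper's Lemma \ref{lemma_ab}, which covers relation \ref{braid_rel3} for all $\al\neq\be$ with no adjacency restriction); asserting a mutation in the ``adjacent perch'' subcase is wrong, and performing one would produce a quiver different from $Q(y)$. Second, the one subcase of \ref{braid_rel3} that genuinely requires a mutation is $\al=\be$, i.e. $s_\al^{(r)} s_{\bar\al}^{(r+1)}\leftrightarrow s_{\bar\al}^{(r)}s_\al^{(r+1)}$, and its local computation is \emph{not} ``exactly as in the braid case'': at the mutated vertex ${\al\choose r}$ both horizontal arrows point outward, ${\al\choose r}\to{\al\choose r-1}$ and ${\al\choose r}\to{\al\choose r+1}$, while the half-arrows from the two elementary quivers now reinforce rather than cancel, giving one incoming arrow ${\be_j\choose q_j}\to{\al\choose r}$ for \emph{each} of the $k$ neighbors $\be_j$ of $\al$ in $\Gamma$ --- a $(k+2)$-arrow star rather than the four-arrow alternating configuration of the braid case --- and the mutation creates $2k$ shortcut arrows whose persistence/cancellation pattern must be checked on its own (the paper's Lemma \ref{lemma_aa}). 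By deferring this case to the braid computation you leave the essential verification undone; your argument needs this third, distinct local check to close.
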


\noindent It follows from lemmas \ref{lemma_ab}, \ref{lemma_aba}, \ref{lemma_aa} below.

\begin{lemma}
\label{lemma_ab}
If words $x$ and $y$ differ by a single relation of the form \ref{braid_rel1} or by a single relation of the form \ref{braid_rel3} with $\al\neq\be$, then associated quivers $Q(x)$ and $Q(y)$ are identical. 
\begin{flushright}
$\square$
\end{flushright}
\end{lemma}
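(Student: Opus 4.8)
The plan is to exploit that a quiver in our family is completely determined by its vertex set $I_x$ together with its skew-symmetric adjacency matrix $\eps^{\mathcal P}$, and that the latter is computed from the elementary pieces by the amalgamation formula $\eps^{\mathcal P}_{i,j}=\sum\eps^p_{i',j'}$. First I would record that swapping two adjacent letters leaves every count $n_\gamma$ unchanged, so that $I_x=I_{y}$ and it remains only to compare adjacency matrices. Thus the statement reduces to a purely local computation: I isolate the contributions of the two elementary quivers attached to the swapped letters and show their combined contribution to $\eps^{\mathcal P}$ is the same before and after the swap, every other contribution being manifestly unaffected.

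Next I would set up the local picture. Say the swap exchanges the letters in positions $p,p+1$, decorated by $\al$ and $\be$ with $\al\neq\be$; let these be the $r$-th occurrence of an $\al$-letter and the $t$-th occurrence of a $\be$-letter. The key observation is that, because $\al\neq\be$, the occurrence indices $r$ and $t$ are \emph{not} changed by the swap; hence in both $x$ and $y$ the vertices $\al',\al''$ embed to ${\al\choose r-1},{\al\choose r}$ and $\be',\be''$ embed to ${\be\choose t-1},{\be\choose t}$, so the internal arrows $\al'\!\to\!\al''$ and $\be'\!\to\!\be''$ contribute identically. Likewise every elementary quiver other than these two keeps its embedding, since for any $\gamma$ the number of $\gamma$-letters to the left of a non-swapped letter is unchanged. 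The only things that move are the image of the auxiliary vertex $\be$ inside $Q(s_\al^{(r)})$ (from ${\be\choose t-1}$ to ${\be\choose t}$) and of the auxiliary vertex $\al$ inside the $\be$-quiver (from ${\al\choose r}$ to ${\al\choose r-1}$). So I only need to compare, in $x$ and in $y$, the total number of arrows between the $\al$- and $\be$-perches produced by these two elementary quivers.

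I would then treat the two relations separately. For a relation of type \ref{braid_rel1} one has $C_{\al\be}=0$, so by the defining formula $\eps(\al)_{\al'\be}=\eps(\al)_{\al''\be}=0$ and symmetrically for the $\be$-quiver; the displaced auxiliary vertices simply carry no arrows, and $\eps^{\mathcal P}$ is literally unchanged. For a relation of type \ref{braid_rel3} the letters are $s_\al$ and $s_{\bar\be}$ with $\al\neq\be$ and $c:=C_{\al\be}=C_{\be\al}$ arbitrary. Using that $Q(s_{\bar\be})$ has the adjacency matrix of $Q(s_\be)$ with all signs reversed, I would sum the four cross-perch half-arrow contributions in each word. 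A direct computation shows that in both $x$ and $y$ the net result is $\eps^{\mathcal P}_{{\al \choose r-1},{\be \choose t-1}}=\tfrac c2$ and $\eps^{\mathcal P}_{{\al \choose r},{\be \choose t}}=-\tfrac c2$, while the two remaining contributions, at the entry $({\al \choose r},{\be \choose t-1})$ in $x$ and at $({\al \choose r-1},{\be \choose t})$ in $y$, each cancel against the matching contribution of the other piece. Hence $\eps^{\mathcal P}$ agrees for $x$ and $y$, and $Q(x)=Q(y)$.

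The main obstacle is the careful bookkeeping in this last step: one must simultaneously track the reassignment of the auxiliary vertices and the sign reversal coming from the bar, and check that the half-arrows recombine to identical weights. It is instructive that the same bookkeeping for two \emph{unbarred} letters does not preserve $\eps^{\mathcal P}$ unless $C_{\al\be}=0$, which is precisely the hypothesis distinguishing case \ref{braid_rel1} from case \ref{braid_rel3}; this sanity check confirms that the computation detects exactly the commutation relations that hold in $\BB_\Gamma$.
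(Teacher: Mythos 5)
Your proposal is correct, and it takes the only natural route here: the paper actually states this lemma with an empty proof (a bare $\square$), treating it as immediate from the amalgamation construction, and your argument is exactly the bookkeeping that justifies that claim. I checked your case-(iii) computation and it holds: in both words the cross-perch contributions sum to $c/2$ at $\bigl({\al\choose r-1},{\be\choose t-1}\bigr)$ and $-c/2$ at $\bigl({\al\choose r},{\be\choose t}\bigr)$ while the mixed entries cancel between the two elementary pieces, and your closing sanity check (that the same computation for two unbarred letters forces $C_{\al\be}=0$) correctly identifies why the hypothesis distinguishes relation \ref{braid_rel1} from \ref{braid_rel2}.
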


\begin{lemma}\label{lemma_aba}
If words $x$ and $y$ differ by a single relation of the form \ref{braid_rel2}, corresponding to exchange of fragments $s_{\al}^{(r)}s_\be^{(q)}s_{\al}^{(r+1)}\longleftrightarrow s_{\be}^{(q)}s_\al^{(r)}s_{\be}^{(q+1)}$, then $Q(y)$ is the mutation of $Q(x)$ at the vertex ${\al \choose r}$.
\end{lemma}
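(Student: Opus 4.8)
The plan is to exploit the \emph{locality} of mutation. Since $\mu_{\binom{\al}{r}}$ alters only the arrows incident to $\binom{\al}{r}$ together with the two-step paths running through it, and since $x$ and $y$ coincide outside the exchanged fragment, the whole statement reduces to a computation on the star of $\binom{\al}{r}$. First I would fix the identification of vertex sets $I_x\cong I_y$. Because the braid move \ref{braid_rel2} trades two letters $s_\al$ and one letter $s_\be$ for one $s_\al$ and two $s_\be$, the count $n_\al$ drops by one while $n_\be$ grows by one; the natural bijection is therefore the identity on every perch except the $\al$- and $\be$-perches, where it reindexes by sending the mutated vertex $\binom{\al}{r}$ to the newly created $\be$-vertex $\binom{\be}{q}$ and shifting $\binom{\al}{s}\mapsto\binom{\al}{s-1}$ for $s>r$, $\binom{\be}{s}\mapsto\binom{\be}{s+1}$ for $s\geq q$. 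Under this identification $Q(x)$ and $Q(y)$ are glued from literally the same elementary pieces away from the fragment, so by the amalgamation construction it suffices to match them on the neighbourhood of $\binom{\al}{r}$. Note also that, since the fragment forces $1\leq r\leq n_\al-1$, the vertex $\binom{\al}{r}$ is interior, non-frozen, and adjacent only to full arrows, so the mutation of Section 2.2 is legitimately applicable.

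Next I would write out the local arrows of $Q(x)$ as the amalgamation of the three elementary quivers $Q(s_\al^{(r)})$, $Q(s_\be^{(q)})$, $Q(s_\al^{(r+1)})$, glued along $\binom{\al}{r},\binom{\be}{q-1},\binom{\be}{q}$. The five vertices in play are $\binom{\al}{r-1},\binom{\al}{r},\binom{\al}{r+1}$ on the $\al$-perch and $\binom{\be}{q-1},\binom{\be}{q}$ on the $\be$-perch. Summing the half-integer adjacencies $\varepsilon(\al),\varepsilon(\be)$ produces: the two $\al$-perch arrows joining $\binom{\al}{r}$ to $\binom{\al}{r\pm1}$; the single $\be$-perch arrow between $\binom{\be}{q-1}$ and $\binom{\be}{q}$; and the four diagonals tying $\binom{\al}{r}$ to the two $\be$-vertices (note that $\binom{\al}{r}$ is the $\al$-vertex of $Q(s_\be^{(q)})$, and $\binom{\be}{q-1},\binom{\be}{q}$ are the $\be$-vertices of $Q(s_\al^{(r)}),Q(s_\al^{(r+1)})$, so precisely these two half-arrow pairs combine into full diagonals). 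I would also record the boundary half-arrows $\binom{\al}{r-1}\!-\!\binom{\be}{q-1}$ and $\binom{\al}{r+1}\!-\!\binom{\be}{q}$ that attach the fragment to the rest of the word.

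Then I would perform the mutation at $\binom{\al}{r}$, i.e.\ the premutation followed by reduction via the Splitting Theorem \ref{split}. The two $\al$-perch arrows get reversed and generate a shortcut between $\binom{\al}{r-1}$ and $\binom{\al}{r+1}$, which is exactly the $\al$-perch arrow of $Q(y)$ once the $\al$-perch has been shortened by one. The shortcut created between the two $\be$-neighbours $\binom{\be}{q-1}$ and $\binom{\be}{q}$ should form a $2$-cycle with the pre-existing $\be$-perch arrow and be split off in reduction, thereby severing the old direct $\be$-connection. Finally, reading the reversed diagonals at $\binom{\al}{r}$ through the bijection — which now regards this vertex as the inserted $\be$-vertex $\binom{\be}{q}$ of $Q(y)$ — these become the new $\be$-perch arrows $\binom{\be}{q-1}\to\binom{\be}{q}\to\binom{\be}{q+1}$ and the new diagonals to the $\al$-perch. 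Matching the resulting local quiver against the directly amalgamated neighbourhood in $Q(y)$, together with the observation that the two boundary half-arrows are untouched by the mutation (no two-path through $\binom{\al}{r}$ connects their endpoints) and hence agree verbatim, completes the argument; correctness up to right-equivalence is guaranteed by Theorem \ref{mucorrect}.

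The step I expect to be the main obstacle is exactly this orientation bookkeeping. The entire argument hinges on two sign facts: that the shortcut through $\binom{\al}{r}$ points \emph{opposite} to the $\be$-perch arrow, so that reduction cancels it rather than producing a spurious double arrow, and that the reversed diagonals reassemble into the correct $\be$-perch data of $Q(y)$ rather than their reverses. I would therefore fix one convention for $\varepsilon$ and for the orientation of the elementary arrows at the very outset, and carefully verify the $2$-cycle cancellation; the remaining identifications are then a routine finite check.
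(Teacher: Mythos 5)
Your strategy coincides with the paper's own proof: reduce to the star of ${\al\choose r}$, compute the incident arrows from the amalgamation, mutate, and match against $Q(y)$ under the renaming ${\al\choose r}\mapsto{\be\choose q}$, ${\al\choose r'}\mapsto{\al\choose r'-1}$ for $r'>r$, ${\be\choose q'}\mapsto{\be\choose q'+1}$ for $q'\geq q$. However, there is a genuine error at the step you dismiss as an afterthought. You claim that the two boundary arrows joining ${\al\choose r-1}$ to ${\be\choose q-1}$ and ${\al\choose r+1}$ to ${\be\choose q}$ are ``untouched by the mutation (no two-path through ${\al\choose r}$ connects their endpoints) and hence agree verbatim.'' This is false: the arrows incident to ${\al\choose r}$ are ${\be\choose q-1}\rightarrow{\al\choose r}$, ${\al\choose r}\rightarrow{\al\choose r-1}$, ${\al\choose r+1}\rightarrow{\al\choose r}$, ${\al\choose r}\rightarrow{\be\choose q}$, so the two-step paths ${\be\choose q-1}\rightarrow{\al\choose r}\rightarrow{\al\choose r-1}$ and ${\al\choose r+1}\rightarrow{\al\choose r}\rightarrow{\be\choose q}$ pass through the mutated vertex and connect \emph{exactly} the endpoint pairs of the boundary arrows. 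The mutation therefore produces shortcuts parallel or antiparallel to them, and the entire content of the lemma lies in how these shortcuts cancel against, or combine with, the pre-existing boundary arrows. The boundary data is genuinely different on the two sides: for $x=s_{\al_1}s_{\al_2}s_{\al_3}s_{\al_1}s_{\al_2}s_{\al_1}$ the quiver $Q(x)$ has a full arrow ${\al_1\choose 1}\rightarrow{\al_2\choose 1}$ (two co-oriented half-arrows coming from $Q(s_{\al_2}^{(1)})$ and $Q(s_{\al_1}^{(2)})$), while for $y=s_{\al_1}s_{\al_2}s_{\al_3}s_{\al_2}s_{\al_1}s_{\al_2}$ the corresponding half-arrows are anti-oriented and cancel, so $Q(y)$ has no arrow there at all; the identification works only because the shortcut ${\al_2\choose 1}\rightarrow{\al_1\choose 1}$ created by the mutation kills the old arrow in a $2$-cycle during reduction. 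If your claim were true, the mutated quiver would retain the old boundary arrow and would \emph{not} equal $Q(y)$. The paper isolates precisely this point as the check to be performed (``the latter two arrows persist or cancel after mutation in a way consistent with the exchange relation''), whereas your write-up asserts it away with a false justification --- and is internally inconsistent, since you had earlier correctly listed the new diagonals among the mutation's output without saying what happens when they meet the boundary arrows.

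A second, smaller omission: restricting attention to ``the five vertices in play'' presupposes that ${\al\choose r}$ has no arrows to any perch other than the $\al$- and $\be$-perches. This requires an argument, which the paper gives: for every $\gamma\neq\be$ with $C_{\al\gamma}=-1$, the elementary quivers $Q(s_\al^{(r)})$ and $Q(s_\al^{(r+1)})$ each contribute one half-arrow between ${\al\choose r}$ and the \emph{same} $\gamma$-vertex (the same because no letter $s_\gamma$ or $s_{\bar\gamma}$ occurs inside the fragment), and these half-arrows are oppositely oriented, hence cancel under amalgamation. Without this cancellation the mutation would generate additional shortcuts involving the $\gamma$-perches and your local computation would not close up.
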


\begin{proof}
First we describe all arrows adjacent to ${\al \choose r}$. From the fragment 
$s_{\al}^{(r)}s_\be^{(q)}s_{\al}^{(r)}$ of $x$ it is clear that there are four arrows: ${\al \choose r}\longrightarrow {\al \choose r-1}$, ${\al \choose r+1}\longrightarrow {\al \choose r}$, ${\be \choose q-1}\longrightarrow {\al \choose r}$ and ${\al \choose r}\longrightarrow {\be \choose q}$. We claim that there are no other arrows adjacent to ${\al \choose r}$. Indeed, for any other $\be'$ there can be an arrow joining ${\be'\choose q'}$ and ${\al \choose r}$ only if $C_{\al\be'} = -1$. But during amalgamation these arrows come from $Q(s_{\al}^{(r)})$ and $Q(s_{\al}^{(r+1)})$, each of them has one half-arrow and they cancel each other in $Q(x)$. 
Thus, the mutation at ${\al \choose r}$ before cancelling oriented 2-cycles introduces only four new arrows: 
${\be\choose q-1}\longrightarrow{\be\choose q}$ (always cancells out in a 2-cycle), ${\al\choose r+1}\longrightarrow{\al\choose r-1}$ (always persists) and ${\be\choose q-1}\longrightarrow{\al\choose r-1}$, ${\al\choose r+1}\longrightarrow{\be\choose q}$. It is straightforward to check that the latter two arrows persist or cancel after mutation in a way consistent with the exchange relation \ref{braid_rel2}. Consequently, the quiver $\mu_{\al \choose r}Q(x)$ coincides with $Q(y)$ after renaming vertices in the following way:

$${\al\choose r} \mapsto {\be \choose q},\ {\al\choose r'} \mapsto  {\al\choose r'-1}, \ {\be\choose q'} \mapsto  {\be\choose q'+1}\ \forall r'>r,\ q'\geq q$$
\end{proof}

\begin{lemma}\label{lemma_aa}
If words $x$ and $y$ differ by a single relation of the form \ref{braid_rel3}, corresponding to exchange of fragments $s_{\al}^{(r)}s_{\bar\al}^{(r+1)}\longleftrightarrow s_{\bar\al}^{(r)}s_{\al}^{(r+1)}$, then $Q(y)$ is the mutation of $Q(x)$ at the vertex ${\al \choose r}$.
\end{lemma}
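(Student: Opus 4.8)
The plan is to run the same local arrow-count that proves Lemma \ref{lemma_aba}, but now the two elementary pieces meeting at ${\al \choose r}$ are $Q(s_\al^{(r)})$ and $Q(s_{\bar\al}^{(r+1)})$, and the decisive difference is that their half-arrows reinforce rather than cancel. First I would list all arrows of $Q(x)$ incident to ${\al \choose r}$. Under the gluing data $s_\al^{(r)}$ sends $\al'\mapsto {\al \choose r-1},\ \al''\mapsto {\al \choose r}$, while $s_{\bar\al}^{(r+1)}$ sends $\al'\mapsto {\al \choose r},\ \al''\mapsto {\al \choose r+1}$. In $Q(s_\al)$ the full arrow runs $\al'\to\al''$ and the half-arrows are $\be\to\al''$ for neighbors $\be$ of $\al$; in $Q(s_{\bar\al})$ everything is reversed, so the full arrow is $\al''\to\al'$ and the half-arrows are $\be\to\al'$. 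Translating through the embeddings, \emph{every} contribution points into ${\al \choose r}$: the two full arrows ${\al \choose r-1}\to {\al \choose r}$ and ${\al \choose r+1}\to {\al \choose r}$, together with, for each neighbor $\be$, a pair of half-arrows ${\be \choose j}\to {\al \choose r}$ (here $j$ is the number of $\be$-letters to the left of the fragment, the same for both pieces since they are adjacent). These two half-arrows share orientation, so amalgamation fuses them into one full arrow ${\be \choose j}\to {\al \choose r}$. Hence ${\al \choose r}$ is a sink of $Q(x)$, with no other incident arrows.

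Next I would repeat the bookkeeping for $y$, whose fragment is $s_{\bar\al}^{(r)}s_\al^{(r+1)}$. Now $s_{\bar\al}^{(r)}$ contributes $\al''\to\al'$ and $\al''\to\be$, and $s_\al^{(r+1)}$ contributes $\al'\to\al''$ and $\al'\to\be$; after the embeddings every arrow incident to ${\al \choose r}$ points out of it, so ${\al \choose r}$ is a source of $Q(y)$ carrying exactly the reversed arrow set. I would also observe that swapping $s_\al^{(r)}s_{\bar\al}^{(r+1)}$ for $s_{\bar\al}^{(r)}s_\al^{(r+1)}$ leaves $n_\al$ unchanged and does not alter the count of any letter to the left of any external position, so the vertex sets, the decoration map, and every arrow away from ${\al \choose r}$ agree for $Q(x)$ and $Q(y)$. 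In particular, unlike Lemma \ref{lemma_aba}, no relabelling of vertices is needed.

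Finally I would invoke the elementary shape of mutation at a sink. Since ${\al \choose r}$ is interior ($1\le r\le n_\al-1$) it is non-frozen, and as a sink it lies on no $2$-cycle, so $\mu_{\al \choose r}$ is defined. In the premutation formula the new shortcuts span $A_{i,k}\otimes A_{k,j}$, and at a sink $k={\al \choose r}$ there are no arrows out of $k$, i.e. $A_{i,k}=0$ for all $i$; thus no shortcut arrows are produced, the premutated quiver already has no $2$-cycles, and no reduction is required. The mutation therefore simply reverses the arrows incident to ${\al \choose r}$, turning the sink into a source and fixing everything else. Comparing with the previous paragraph, $\mu_{\al \choose r}Q(x)$ and $Q(y)$ have identical vertices, decorations, and arrows, which proves the claim; the identical local analysis applies verbatim to the cyclic quivers $Q(\widetilde x)$ and $Q(\widetilde y)$.

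The main obstacle, and really the only subtle point, is the orientation check showing that the two half-arrows at ${\al \choose r}$ add rather than cancel. This is precisely what differs from the $aba$ situation of Lemma \ref{lemma_aba}: it is what makes ${\al \choose r}$ a genuine sink (so that the mutation is a bare arrow-reversal) instead of generating shortcut arrows and forcing a relabelling. Once this sign bookkeeping is settled, the conclusion is automatic from the sink/source description of mutation.
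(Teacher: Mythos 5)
Your central claim --- that ${\al\choose r}$ is a sink of $Q(x)$ --- is incorrect, and the whole proof collapses with it. You read the elementary quiver off the literal sign convention of the adjacency matrix, obtaining a piece in which all arrows flow from $\al'$ into $\al''$. But the construction the paper actually uses (its figures, the prose ``horizontal arrows \ldots go from right to left for generators $s_\al$'', the proofs of Lemmas \ref{lemma_aba} and \ref{lemma_aa}, and, decisively, the definition of $\al\bar\be$-cycles in (\ref{ab-cycle}), which must be \emph{oriented} cycles for the potentials of Section 4 to exist at all) is the cyclic one: each elementary piece forms oriented triangles with the neighboring perches. With that orientation the fragment $s_\al^{(r)}s_{\bar\al}^{(r+1)}$ produces at ${\al\choose r}$ two \emph{outgoing} horizontal arrows ${\al\choose r}\to{\al\choose r\pm1}$ and $k$ \emph{incoming} arrows ${\be_j\choose q_j}\to{\al\choose r}$, so for $k\geq 1$ the vertex is neither a sink nor a source. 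There are then $2k$ two-step paths through it, premutation creates $2k$ shortcut arrows ${\be_j\choose q_j}\to{\al\choose r\pm1}$, and the substance of the proof is verifying that each shortcut either cancels in a $2$-cycle or survives, exactly as dictated by the exchange of letters. That verification --- the ``direct check'' in the paper --- is entirely absent from your argument.

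Moreover, the failure is internal and independent of which orientation convention one adopts: your intermediate claim that every arrow away from ${\al\choose r}$ agrees in $Q(x)$ and $Q(y)$ is false. Each of $Q(s_\al^{(r)})$ and $Q(s_{\bar\al}^{(r+1)})$ also carries half-arrows at its \emph{outer} vertex, ${\al\choose r-1}$ resp.\ ${\al\choose r+1}$ (you listed the half-arrows of $Q(s_\al)$ only at $\al''$, dropping those at $\al'$), and barred versus unbarred letters carry these with opposite orientations. Hence swapping $s_\al^{(r)}s_{\bar\al}^{(r+1)}\leftrightarrow s_{\bar\al}^{(r)}s_\al^{(r+1)}$ flips the half-arrow contribution at ${\al\choose r-1}$ and at ${\al\choose r+1}$ toward every neighboring perch, so for each $\be_j$ the adjacency between ${\al\choose r\pm1}$ and ${\be_j\choose q_j}$ differs by one full arrow between $Q(x)$ and $Q(y)$. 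A mutation at a sink changes no arrow among the remaining vertices, so it could never produce these changed adjacencies: if your sink picture were accurate, the lemma would simply be false. It is precisely the shortcut arrows of the genuine (non-sink) mutation that account for this difference, which is why the paper's proof must, and does, track the creation and cancellation of $2$-cycles rather than perform a bare arrow reversal.
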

\begin{proof}
Let $\be_1,\be_2, \ldots\be_k$ be the set of all elements in $\Gamma$ such that $C_{\al\be_j} =  -1$. Then since $x$ contains two subsequent letters $s_{\al}^{(r)} s_{\bar\al}^{(r+1)}$, the amalgamated quiver $Q(x)$ has exactly $k+2$ arrows adjacent to ${\al \choose r}$: ${\al\choose r}\longrightarrow{\al\choose r-1}$, ${\al\choose r}\longrightarrow{\al\choose r+1}$, ${\be_j\choose q_j}\longrightarrow{\al\choose r}$, where $1\leq j\leq k$ and $q_j$ is the number of letters $s_{\be_j}$ and $s_{\bar{\be_j}}$ to the left of $s_{\al}^{(r)}$.

Thus the mutation at vertex ${\al \choose r}$ before cancellation of oriented $2$-cycles introduces $2k$ new arrows:  ${\be_j\choose q_j} \longrightarrow {\al\choose r-1}$, ${\be_j\choose q_j} \longrightarrow {\al\choose r+1},\ \forall 1\leq j \leq k$. Direct check shows that these arrows appear or cancel in 2-cycles after mutation in the way consistent with exchanging letters $s_{\al}^{(r)}$ and $s_{\bar\al}^{(r+1)}$. This computation shows that $Q(y)$ can be identified with $\mu_{\al\choose r}Q(x)$.
\end{proof}

Note that proposition \ref{braid_mutations} shows that one can associate cluster variety to an element of~$\BB_\Gamma$.

\section{Primitive potentials on quivers $Q(\widetilde x)$.}



We demand that quivers in this section satisfy the condition (\ref{exclusions}) that ensures absence of loops and $2$-cycles in $Q(\widetilde x)$. Condition (\ref{connect}) is equivalent to connectedness of the quiver.

\begin{equation}\label{exclusions}
\text{$\forall\al\in\Gamma$ the number of letters $s_\al$ and $s_{\bar\al}$ in $x$ is at least $3$;}
\end{equation}
\begin{equation}\label{connect}
\text{$\forall \al,\be\in \Gamma$ with $C_{\al\be} = -1$ there exists at least one $\al\bar\be$-cycle in $Q(\widetilde x)$}
\end{equation}

Note that condition (\ref{exclusions}) guarantees that all spaces of arrows in the quiver are one-dimensional. So after choosing a basis in each space of arrows we can identify elements of path algebras with products of arrows with coefficients in the field $\kk$. 

\begin{figure}
	\centering
	\captionsetup{width=0.8\textwidth}
	\includegraphics{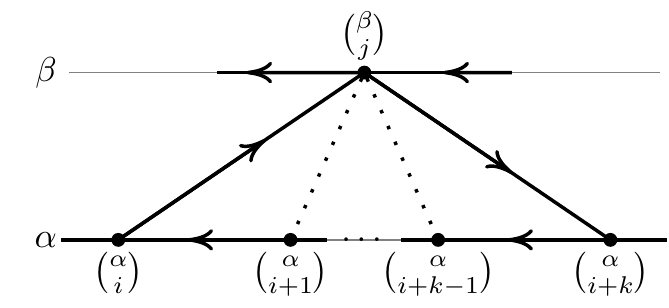}
	\captionof{figure}{$\al\bar{\be}$-cycle for $s_\al^{(i+1)}\ldots s_\al^{(i+k)}$}
	\label{cycle1}
\end{figure}
\begin{figure}
	\centering
	\captionsetup{width=0.8\textwidth}
	\includegraphics{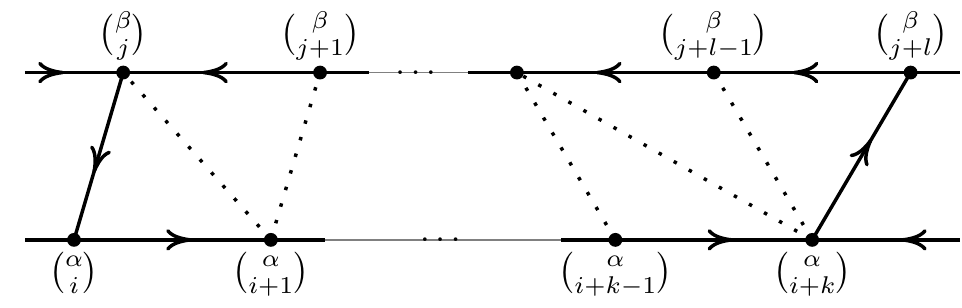}
	\captionof{figure}{$\be\bar{\al}$-cycle for  $s_{\bar{\al}}^{(i+1)}\ldots s_{\bar{\al}}^{(i+k)}s_\be^{(j+1)}\ldots s_\be^{(j+l)}$}
		\label{cycle2}
\end{figure}

Cycles appearing in potentials of our interest are defined as follows:

\begin{definition} Fix two elements $\al,\be\in \Gamma$ with $C_{\al\be} = -1$. Choose a maximal subsequence of $x$ consisting of letters $s_\al$ and $s_{\bar\be}$ such that there are no letters $s_{\bar\al}$ and $s_\be$ in between.  This subsequence is composed by letters $s_\al^{(i+1)}\ldots s_\al^{(i+k)}, s_{\bar\be}^{(j+1)},\ldots s_{\bar\be}^{(j+l)}$ for some $i,j,k,l$ (where all superscripts for $s_\al$ and $s_{\bar\al}$ are considered modulo $n_\al\ZZ$). Moreover, by its maximality the cycle 

\begin{equation}\label{ab-cycle}
\left[{\be \choose j}\rightarrow{\al \choose i}\rightarrow{\al \choose i+1}\rightarrow\ldots\rightarrow{\al \choose i+k}\rightarrow{\be \choose j+l}\rightarrow{\be \choose j+l-1}\rightarrow\ldots\rightarrow{\be \choose j}\right]
\end{equation}

\noindent is well defined (in other words, all arrow spaces involved are one-dimensional). We call a cycle constructed in this way  from any maximal subsequence an \textbf{$\al\bar\be$-cycle} (see Fig. \ref{cycle1}, \ref{cycle2}).
\end{definition}

\begin{definition}
A potential on $Q(\widetilde x)$ is called \textbf{primitive} if it is a sum of \emph{all} possible $\al\bar{\be}$-cycles in $Q(\widetilde x)$ ($\forall\al,\be\in\Gamma$, s.t. $C_{\al\be}=-1$), and all such cycles are taken with nonzero coefficients.
\end{definition}

\medskip
We will show that the space of primitive potentials on $Q(\widetilde x)$ modulo right-equivalences has a very simple structure. The first step for understanding invariants of primitive potentials under the action of the group of right-equivalence is Proposition \ref{phi1} below. Recall by Proposition \ref{right-eq} any right-equivalence $\varphi$ is determined by the two components $(\varphi^{(1)}, \varphi^{(2)})$ of its action on arrow spaces. 

\begin{proposition}\label{phi1}
Let $W$ be a primitive potential on $Q(\widetilde x)$ satisfying conditions (\ref{exclusions},\ref{connect}). Assume that the right-equivalence $\varphi$ maps $W$ to another primitive potential $W'$. Then $W' =\varphi(W) =  \varphi^{(1)}(W)$.

In other words, the action of the group of right-equivalences on the space of primitive potentials depends only on its ``linear'' part.
\end{proposition}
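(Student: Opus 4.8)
The plan is to analyze the right-equivalence $\varphi$ through the two components supplied by Proposition \ref{right-eq}, and to exploit the rigid combinatorial shape of the cyclic paths that make up a primitive potential. By condition (\ref{exclusions}) every arrow space $A_{i,j}$ is at most one-dimensional, so the $R$-bimodule map $\varphi^{(1)}$ preserves each $A_{i,j}$ and hence acts on every arrow $a$ by a scalar, $\varphi^{(1)}(a)=\lambda_a\, a$. Since $\varphi$ is an isomorphism, Proposition \ref{right-eq} forces $\varphi^{(1)}$ to be invertible, so every $\lambda_a\neq 0$. Consequently $\varphi^{(1)}$ rescales each $\al\bar\be$-cycle $c$ by the nonzero scalar $\prod_{a\in c}\lambda_a$, and $\varphi^{(1)}(W)$ is again a primitive potential (the same set of cycles, with rescaled nonzero coefficients).

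Next I would expand $\varphi$ on a single cycle. Writing an $\al\bar\be$-cycle as $c=a_1a_2\cdots a_m$ and using $\varphi(a_i)=\lambda_{a_i}a_i+\varphi^{(2)}(a_i)$ with $\varphi^{(2)}(a_i)\in\mathfrak{m}(Q(\widetilde x))^2$, multiplying out gives
\begin{equation}
\varphi(c)=\varphi^{(1)}(c)+\sum(\text{cyclic paths of length}>m),
\end{equation}
where each correction term is obtained from $c$ by replacing at least one arrow $a_i$ by a path occurring in $\varphi^{(2)}(a_i)$, that is, by a path of length $\geq 2$ with the same source and target as $a_i$. Summing over all cycles, $\varphi(W)=\varphi^{(1)}(W)+C$, where $C$ is a $\kk$-linear combination of cyclic paths, each strictly longer than the $\al\bar\be$-cycle from which it arises.

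The technical heart, and the step I expect to be the main obstacle, is the following claim: no cyclic path occurring in $C$ is itself an $\al'\bar\be'$-cycle. To prove it I would use the rigid description of these cycles coming from (\ref{ab-cycle}): an $\al'\bar\be'$-cycle is supported on exactly two perches, it uses exactly two inter-perch arrows, and it traverses each of the two perches as a single monotone simple arc (in particular it is a simple cycle). Replacing an arrow $a_i$ of $c$ by a same-endpoint path $p$ of length $\geq 2$ destroys this shape: if $p$ visits a vertex decorated by a third element of $\Gamma$, then the resulting path involves three perches; and if $p$ stays on the two perches of $c$, then any excursion off $a_i$ must leave and re-enter a perch, which either creates more than two inter-perch arrows or revisits a vertex, breaking simplicity and monotonicity. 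In every case the substituted cyclic path fails the characterization of an $\al'\bar\be'$-cycle. Carrying out this case analysis carefully, while keeping track of the arrow directions on the perches, is the main work.

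Finally I would assemble the pieces. The $\al\bar\be$-cycles are pairwise distinct cyclic-path monomials, hence linearly independent, and by the claim they are linearly independent from every monomial appearing in $C$. Now $\varphi(W)=W'$ is primitive, so in the basis of cyclic paths it is supported only on $\al\bar\be$-cycles; $\varphi^{(1)}(W)$ is likewise supported only on $\al\bar\be$-cycles; and $C$ is supported only on cyclic paths that are not $\al\bar\be$-cycles. The identity $\varphi(W)=\varphi^{(1)}(W)+C$ then splits across these complementary spans, forcing $C=0$ and hence $\varphi(W)=\varphi^{(1)}(W)$, as claimed.
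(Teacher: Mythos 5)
Your reduction to a diagonal $\varphi^{(1)}$ and the expansion $\varphi(W)=\varphi^{(1)}(W)+C$ are fine, and you correctly identify the technical heart of the matter; but the claim you place there --- that no monomial of $C$ can be an $\al'\bar{\be}'$-cycle --- is false, and the paper flags this exact point before its proof. The claim would be true in $Q(x)$, where all $\al\bar\be$-cycles are chordless, but in $Q(\widetilde x)$ the identification ${\al\choose n_\al}\sim{\al\choose 0}$ closes each perch into a circuit, so a single horizontal arrow can have the same endpoints as the complementary path running the long way around the perch; this creates chords (the paper's example is $\Gamma=A_2$, $x=s_1s_{\bar 2}s_1s_2s_{\bar 1}s_1$). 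Concretely, in case (i) of Lemma \ref{simple_chords} the primitive potential contains two cycles $\xi\,ab_1cb_2+\zeta\,aUcV$ sharing the inter-perch arrows $a,c$, where the single arrows $b_1,b_2$ have the same endpoints as the long perch paths $U,V$. A right-equivalence with $\varphi^{(2)}(b_1)=\mu_1U+\ldots$ and $\varphi^{(2)}(b_2)=\mu_2V+\ldots$ sends the short cycle to
\begin{equation*}
\xi\,a(b_1+\mu_1U)c(b_2+\mu_2V)=\xi\,ab_1cb_2+\xi\mu_1\,aUcb_2+\xi\mu_2\,ab_1cV+\xi\mu_1\mu_2\,aUcV,
\end{equation*}
and the last term is a multiple of the \emph{other} $\al\bar\be$-cycle. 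So the support of $C$ is not disjoint from the span of the $\al\bar\be$-cycles, and your final ``split across complementary spans'' step collapses. Your structural description of the cycles is also inaccurate in a second way: in cases (x),(xi) of the paper's list a cycle passes through ${\al\choose 0}$ twice (this is why Definition \ref{chord} allows repeated vertices), and your ``excursion'' dichotomy misses precisely the substitution that stays on one perch and travels around it the long way --- it creates no third perch, no extra inter-perch arrow, and no illegal revisit.

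The repair is not a refinement of your structural claim but a change in the order of deductions, which is what the paper's proof does. One first classifies all possible chord configurations (Lemma \ref{simple_chords}), then uses Lemma \ref{phiW} case by case: primitivity of $\varphi(W)$ is applied \emph{first} to the cross terms ($aUcb_2$ and $ab_1cV$ above), which are not $\al\bar\be$-cycles and each have a unique chord, so their coefficients in $\varphi(W)$ are exactly $\xi\mu_1$ and $\xi\mu_2$ and must vanish; this gives $\mu_1=\mu_2=0$, and only then does the dangerous diagonal contribution $\xi\mu_1\mu_2\,aUcV$ disappear. This ordering is essential: as the paper remarks, $\varphi$ can preserve a primitive potential with $\varphi^{(2)}\neq 0$ (the components of $\varphi^{(2)}$ are constrained, not killed, by primitivity), so no purely structural argument that rules out $\varphi^{(2)}$-contributions monomial by monomial, without invoking primitivity of the image along the way, can succeed.
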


Here are several remarks about the proposition. Since by definition $\varphi^{(2)}$ maps any arrow to a path of length $\geq 2$ with same initial and terminal points, the action of $\varphi$ on chordless cycles obviously does not depend on $\varphi^{(2)}$. This argument would be sufficient to prove the proposition if all $\al\bar\be$-cycles were chordless. However, this is not true in general. For instance, a non-chordless cycle appears in the quiver $Q(\widetilde x)$ for $\Gamma = A_2$ and $x = s_1s_{\bar 2}s_1s_2s_{\bar1}s_1$. Extra identification of vertices in $Q(x)$ does create some chords in $\al\bar\be$-cycles. Using the same example one can also verify that if $\varphi$ preserves a primitive potential, it is not necessarily true that $\varphi^{(2)} = 0$.

Proof of Proposition \ref{phi1} is based on two steps: we use Lemma \ref{simple_chords}  to describe all possible configurations of chords appearing in $Q(\widetilde x)$, then using Lemma \ref{phiW} we relate action of right-equivalences on a potential to combinatorics  of chords.

The following definition of chords is used to include possibility of repeating vertices in a cycle.

\begin{definition}\label{chord}
A \textbf{chord} in a cycle $a_1a_2\ldots a_k$ of quiver $Q$ is a triple $(b,i,j)$, where $b$ is an arrow in $Q$ such that $t(b) = t(a_i)$ and $s(b) = s(a_j)$ with $i\neq j$. 
\end{definition}

\begin{lemma}\label{simple_chords}
Let $(b, i, j)$ be a chord in $\al\bar\be$-cycle $C$ in the quiver $Q(\widetilde x)$ satisfying (\ref{exclusions},\ref{connect}). Then indices $(i,j)$ are uniquely determined by the arrow $b$.
\end{lemma}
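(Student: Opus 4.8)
The plan is to translate the statement into one about the cyclic sequence of vertices traversed by $C$, and then to show that this sequence is vertex-simple apart from one tightly controlled degeneration.

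First I would fix the dictionary between the chosen maximal subsequence and the arrows of $C$. Writing the arrows of $C$ in cyclic order as $a_1a_2\ldots a_{k+l+2}$, the cycle runs through the vertices
\[
v_0=\binom{\be}{j}\to v_1=\binom{\al}{i}\to\cdots\to v_{k+1}=\binom{\al}{i+k}\to v_{k+2}=\binom{\be}{j+l}\to\cdots\to v_{k+l+2}=v_0,
\]
with $t(a_m)=v_m$ and $s(a_m)=v_{m-1}$. Then finding the index $i$ with $t(b)=t(a_i)$ amounts to locating the vertex $t(b)$ in this list, and finding $j$ with $s(b)=s(a_j)$ amounts to locating $s(b)$ as some $v_{j-1}$. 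Hence it suffices to prove that every vertex of $C$ occurs at most once among $v_0,\ldots,v_{k+l+1}$.

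Next I would establish this vertex-simplicity. On the $\al$-perch the cycle visits the vertices with indices $i,i+1,\ldots,i+k$ read modulo $n_\al$, and these are pairwise distinct exactly when $k<n_\al$. Since $k$ counts the letters $s_\al$ appearing in the maximal subsequence while $n_\al$ counts all letters $s_\al$ and $s_{\bar\al}$, we always have $k\le n_\al$, with equality only when $x$ contains no letter $s_{\bar\al}$ and the whole $\al$-perch is swept out by a single run; the same holds for the $\be$-perch. Consequently $C$ is vertex-simple unless a run closes up around an entire perch, and even then exactly two entries of the list coincide, namely the junction vertex $\binom{\al}{i}=\binom{\al}{i+k}$ (respectively $\binom{\be}{j}=\binom{\be}{j+l}$). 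In the vertex-simple case the indices $(i,j)$ are read off unambiguously from $s(b)$ and $t(b)$, which already settles the lemma for such cycles; this covers in particular the chords that occur even in non-degenerate examples, where $C$ is simple yet still carries chords.

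It remains to rule out ambiguity at a junction vertex $V=\binom{\al}{i}=\binom{\al}{i+k}$, and this is the crux. I claim that in the wraparound situation no chord is incident to $V$, i.e. every arrow of $Q(\widetilde x)$ touching $V$ is one of the four cycle arrows $a_1,a_2,a_{k+1},a_{k+2}$ meeting it (note that a crossing arrow from $V$ to some other perch $\gamma\neq\be$ is never a chord, since its second endpoint does not lie on $C$). To prove the claim I would return to the amalgamation rules of Section~3: a full wraparound forces the two $\al$-letters adjacent to $V$, namely $s_\al^{(i)}$ and $s_\al^{(i+1)}$, to carry no bar, so the two elementary quivers $Q(s_\al^{(i)})$ and $Q(s_\al^{(i+1)})$ send $V$ exactly one incoming and one outgoing half-arrow toward the $\be$-perch; these assemble into the crossing arrows $a_1$ and $a_{k+2}$ already used by $C$ (they cannot land on a common $\be$-vertex, as that would produce a $2$-cycle, excluded by~(\ref{exclusions})), while the two perch arrows at $V$ are $a_{k+1}$ and $a_2$. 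Thus every chord attaches only at simple vertices, where its endpoints force a unique pair $(i,j)$. The main obstacle is precisely this last bookkeeping of half-arrow combination and cancellation around a junction vertex; once it is carried out, the statement follows.
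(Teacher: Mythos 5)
Your reduction of the lemma to vertex-simplicity of $C$, and your identification of the wraparound case $k=n_\al$ (forced by the absence of $s_{\bar\al}$) with its single repeated junction vertex $V$, match the structure of the problem correctly. However, your resolution of the crux is based on a false claim: it is \emph{not} true that in the wraparound case no chord is incident to $V$. Definition \ref{chord} does not exclude arrows of $C$ itself from serving as chords, and the four cycle arrows at the junction \emph{are} chords. Normalizing $i=j=0$, $k=n_\al$: for $b={\al\choose 0}\rightarrow{\be\choose 0}$ the vertex ${\be\choose 0}$ is simple, so the index $i$ must be that of $b$ itself, while ${\al\choose 0}$ is the source of both $b$ and of the horizontal cycle arrow out of the second visit of $V$, so taking $j$ to be the index of the latter gives a legitimate triple with $i\neq j$; the same happens for ${\be\choose l}\rightarrow{\al\choose 0}$ and for the two horizontal arrows at $V$. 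These are exactly the chords the paper records in cases (x) and (xi), and they cannot be defined away: cutting along them yields $\left[{\al\choose 0}\rightarrow{\be\choose 0}\rightarrow\ldots\rightarrow{\be\choose l}\rightarrow{\al\choose 0}\right]$ and the pure $\al$-perch loop, which are precisely the contributions (the $\zeta aVcU$ term and the substitutions $a\mapsto a+\lambda Ua$, $c\mapsto c+\nu cU$) on which cases (x),(xi) of the proof of Proposition \ref{phi1} later depend. The true mechanism for uniqueness at the junction, which your proof never checks, is this: any arrow at $V$ whose other endpoint lies on $C$ is one of the four cycle arrows; for each of these the endpoint away from $V$ is simple and fixes one index, and of the two candidates for the remaining index exactly one is eliminated by the clause $i\neq j$ in Definition \ref{chord}. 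Uniqueness holds \emph{because of} that clause, not because chords avoid $V$; asserting "every chord attaches only at simple vertices" proves too much.

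There is also a genuine flaw in the amalgamation bookkeeping you use to control arrows at $V$. The half-arrows of $Q(s_\al^{(i)})$ and $Q(s_\al^{(i+1)})$ assemble into the crossing arrows $a_1$, $a_{k+2}$ only when these two letters stand at the ends of the maximal subsequence; if the subsequence begins or ends with an $s_{\bar\be}$-letter, those half-arrows cancel against the half-arrows of the adjacent $s_{\bar\be}$ (opposite half-arrows cancel under amalgamation --- they never produce a $2$-cycle, so (\ref{exclusions}) is not what rules this out), and both crossing arrows of $C$ are then assembled entirely from half-arrows of $\be$-letters at the two delimiter transitions. Moreover, changes of bar-type among the $\be$-letters outside the subsequence create additional crossing arrows joining $V$ to vertices ${\be\choose q}$ with $l+1\leq q\leq n_\be-1$; these are harmless for the lemma because ${\be\choose q}$ is off $C$, but they are invisible to your accounting, which allows at $V$ only the four cycle arrows and arrows to perches $\gamma\neq\be$. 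So the crux case has to be redone: first show (by analyzing the delimiters and the $\be$-letters outside the subsequence) that the arrows at $V$ with both endpoints on $C$ are exactly the four cycle arrows, and then run the $i\neq j$ argument above for those four.
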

\begin{proof}
By applying cyclic permutation to letters of $x$ we can assume that maximal subsequence of letters $s_\al$ and $s_{\bar\be}$ associated to $C$ is formed by letters $s_{\al}^{(1)},\ldots s_\al^{(k)}$ and $s_{\bar\be}^{(1)},\ldots s_{\bar\be}^{(l)}$, where without loss of generality $k\geq 1, l< n_\be$. Recall that quiver $Q(\widetilde x)$ can be obtained from $Q(x)$ by identifying vertices $\al \choose n_\al$ with ${\al \choose 0}$ for all $\al\in \Gamma$. Since any $\al\bar\be$-cycle was chordless in $Q(x)$ there are only four possible chords for $C$ up to direction: horizontal ${\al \choose n_\al -1}\leftrightarrow {\al \choose 0},\ {\be\choose 0}\leftrightarrow{\be\choose n_\be -1}$, and diagonal ${\al\choose 0}\leftrightarrow{\be\choose l}$ or ${\al\choose k}\leftrightarrow{\be\choose 0}$. Thus, all possible combinations of chords in $C$ in $Q(\widetilde x)$ are exhausted by the following list (some items may lead to similar combinations of chords):

{\centering
\begin{table}[ht]
\begin{tabular}{ccc}
	\begin{subfigure}{0.33\textwidth}\centering		
		\includegraphics{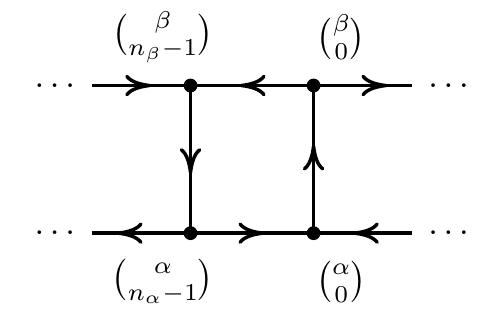}
		\caption*{Case (i)}\label{case_i}
	\end{subfigure}&	
	\begin{subfigure}{0.33\textwidth}\centering
		\includegraphics{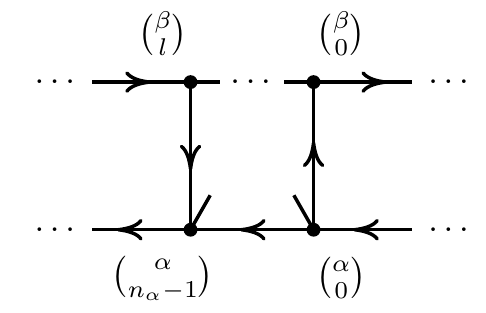}
		\caption*{Case (ii)}\label{case_ii}
	\end{subfigure}&	
	\begin{subfigure}{0.33\textwidth}\centering		
		\includegraphics{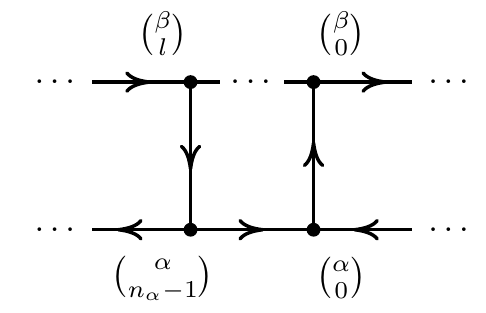}
		\caption*{Case (iii)}\label{case_iii}
	\end{subfigure}\\	
\newline
	\begin{subfigure}{0.33\textwidth}\centering		
		\includegraphics{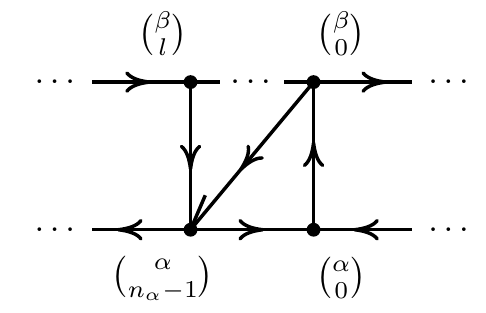}
		\caption*{Case (iv)}\label{case_iv}
	\end{subfigure}&
	
	\begin{subfigure}{0.33\textwidth}\centering		
		\includegraphics{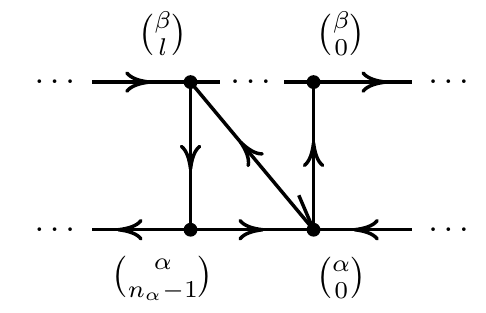}
		\caption*{Case (v)}\label{case_v}
	\end{subfigure}&
	
	\begin{subfigure}{0.33\textwidth}\centering		
		\includegraphics{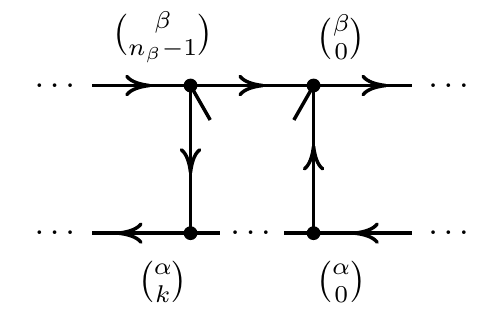}
		\caption*{Case (vi)}\label{case_vi}
	\end{subfigure}\\
\newline
	\begin{subfigure}{0.33\textwidth}\centering		
		\includegraphics{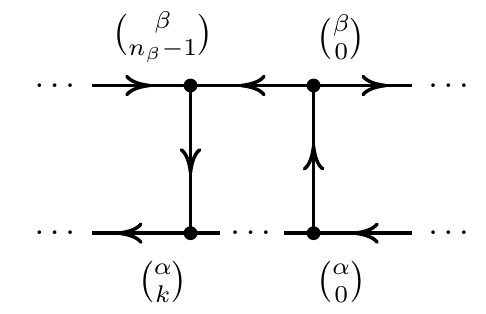}
		\caption*{Case (vii)}\label{case_vii}
	\end{subfigure}&
	
	\begin{subfigure}{0.33\textwidth}\centering		
		\includegraphics{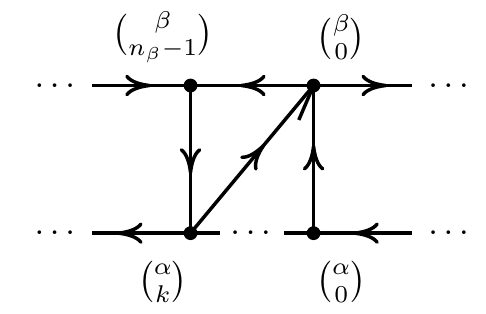}
		\caption*{Case (viii)}\label{case_viii}
	\end{subfigure}&
	
	\begin{subfigure}{0.33\textwidth}\centering		
		\includegraphics{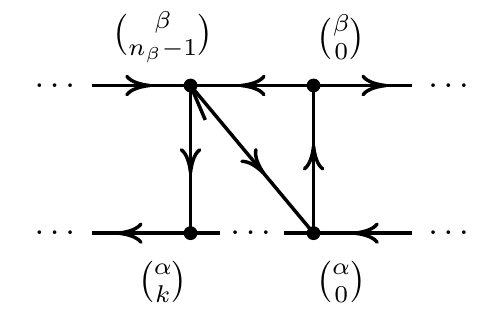}
		\caption*{Case (ix)}\label{case_ix}
	\end{subfigure}\\
\newline
	\begin{subfigure}{0.33\textwidth}\centering		
		\includegraphics{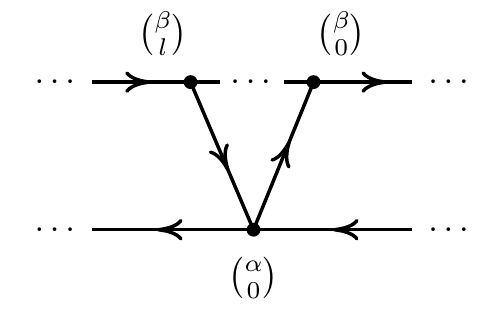}
		\caption*{Case (x)}\label{case_x}
	\end{subfigure}&
	
	\begin{subfigure}{0.33\textwidth}\centering		
		\includegraphics{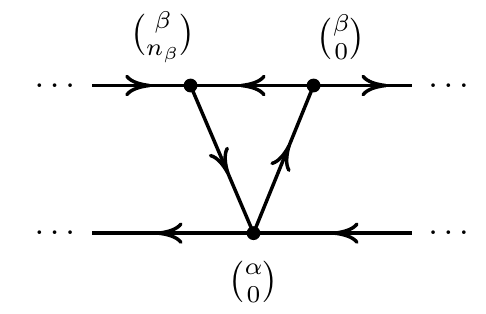}
		\caption*{Case (xi)}\label{case_xi}
	\end{subfigure}&
	
	\begin{subfigure}{0.33\textwidth}\centering
		\begin{tikzpicture}

		\end{tikzpicture}
	\end{subfigure}\\
	
	\end{tabular}
\end{table}
}

\begin{enumerate}[label = (\roman*)]\label{chord_list}
\item \emph{2 horizontal chords:} ${\al \choose n_\al -1}\rightarrow {\al \choose 0},\ {\be\choose 0}\rightarrow{\be\choose n_\be -1};$
\item \emph{1 horizontal chord:}  ${\al\choose 0}\rightarrow{\al \choose n_\al-1}$, $l\leq n_\be-2$ necessarily;
\item \emph{1 horizontal chord:} ${\al\choose n_\al -1}\rightarrow{\al \choose 0}$, $l\leq n_\be-2$ necessarily;
\item \emph{1 horizontal chord, 1 diagonal chord:} ${\al\choose n_\al -1}\rightarrow{\al \choose 0}$, ${\be\choose 0}\rightarrow{\al\choose n_\al -1}$, $l\leq n_\be-2$ necessarily;
\item \emph{1 horizontal chord, 1 diagonal chord:} ${\al\choose n_\al -1}\rightarrow{\al \choose 0}$, ${\al\choose 0}\rightarrow{\be\choose l}$, $l\leq n_\be-2$ necessarily;
\item \emph{1 horizontal chord:} ${\be\choose n_\be-1}\rightarrow{\be\choose 0}$, $k\leq n_\al-2$ necessarily;
\item \emph{1 horizontal chord:} ${\be\choose 0}\rightarrow{\be\choose n_\be-1}$, $k\leq n_\al-2$ necessarily;
\item \emph{1 horizontal chord, 1 diagonal chord:} ${\be\choose 0}\rightarrow{\be\choose n_\be-1}$, ${\al\choose k}\rightarrow{\be\choose 0}$, $k\leq n_\al-2$ necessarily;
\item \emph{1 horizontal chord, 1 diagonal chord:} ${\be\choose 0}\rightarrow{\be\choose n_\be-1}$, ${\be\choose n_\be -1}\rightarrow{\al\choose 0}$, $k\leq n_\al-2$ necessarily;

\item \emph{2 diagonal chords:} ${\be\choose l}\rightarrow{\al\choose 0}$, ${\al\choose 0}\rightarrow{\be\choose 0}$, $l\leq n_\be-2$ necessarily;
\item \emph{1 horizontal chord, 2 diagonal chords:} ${\be\choose n_\be-1}\rightarrow{\al\choose 0}$, ${\al\choose 0}\rightarrow{\be\choose 0}$.
\end{enumerate}

\noindent Direct check shows that for all of the cases above the statement of the lemma holds. Note that in the last two cases $k = n_\al$ and there are two non-trivial diagonal chords with arrows ${\be\choose l}\rightarrow{\al\choose 0}$ and ${\al\choose 0}\rightarrow{\be\choose 0}$. Cutting along these chords gives the following (see defn. below):
\begin{equation}
\cut_{{\be\choose l}\rightarrow{\al\choose 0}}C = \cut_{{\al\choose 0}\rightarrow{\be\choose 0}}C = \left[ {\al\choose 0}\rightarrow{\be\choose 0}\rightarrow\ldots\rightarrow{\be\choose l}\rightarrow{\al\choose 0} \right]
\end{equation}
\end{proof} 

Lemma \ref{simple_chords} allows us to define two cutting operations for $\al\bar\be$-cycle along its chord $b$. The first sends $C = a_1a_2\ldots a_k$ to $\cut_bC:= a_1\ldots a_{i-1}ba_{j+1}\ldots a_k$, the other sends it to $b\Rightcircle C := a_i\ldots a_j$. More generally, for a collection $(b_k,i_k,j_k)_{k\in \ZZ/l\ZZ}$ of $l$ nonintersecting chords of $C$ (that is $j_k<i_{k+1}\forall k\in \ZZ/l\ZZ$), $C$ can be cut along all chords in the obvious sense providing cycle denoted $\cut_{b_1b_2\ldots b_l} C$. In particular, for an empty collection of chords $\cut_\varnothing C = C$ by definition.

Let $U\in R\langle\langle Q\rangle\rangle$ and $p$ be a path in $Q$ identified with an homogeneous element of the path algebra. Denote $U[p]$ the coefficient of $p$ in expression of $U$ (this is well-defined after the choice of one-dimensional arrow spaces is specified). The following lemma follows directly from definitions:

\begin{lemma}\label{phiW}
Let $C$ be an $\al\bar\be$-cycle in $Q(\widetilde x)$ satisfying (\ref{exclusions},\ref{connect}). Then for any right-equivalence $\varphi$ and any potenial $W$ the following holds:

\begin{equation}\label{coeff}
\varphi(W)[C] = \sum_{b_1,b_2,\ldots b_l} W[\cut_{b_1b_2\ldots b_l}C]\prod_{r = 1}^l\varphi(b_r)[b_r\Rightcircle C],
\end{equation}
where the sum is over all collections $(b_1,b_2,\ldots b_k)$of nonintersecting chords of $C$.
\end{lemma}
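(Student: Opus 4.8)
The plan is to compute $\varphi(W)[C]$ directly from the multiplicativity of $\varphi$ and to organize the resulting expansion combinatorially in terms of chords. First I would use that $\varphi$ is a continuous $R$-algebra homomorphism to write $\varphi(W)=\sum_P W[P]\,\varphi(P)$, the sum running over cyclic paths $P$ in the support of $W$, so that it suffices to understand $\varphi(P)[C]$ for each $P=c_1\cdots c_m$. By condition (\ref{exclusions}) every arrow space is one-dimensional, so Proposition \ref{right-eq} lets me split $\varphi(c)=\varphi^{(1)}(c)+\varphi^{(2)}(c)$, where $\varphi^{(1)}(c)$ is a scalar multiple of the arrow $c$ itself and $\varphi^{(2)}(c)\in\mathfrak{m}(Q)^2$ is a combination of paths of length $\geq 2$ sharing source and target with $c$. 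Expanding the product $\varphi(P)=\prod_s\varphi(c_s)$ multilinearly and extracting the coefficient of $C=a_1\cdots a_k$ is then a finite bookkeeping problem.

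Next I would identify each surviving monomial with a chord configuration. A monomial contributing to $C$ presents $C$ as a cyclic concatenation of consecutive blocks $B_1,\ldots,B_m$, one block $B_s$ coming from the factor $\varphi(c_s)$. A block of length one forces $c_s$ to equal that arrow of $C$ and is supplied by the linear part $\varphi^{(1)}$, which reproduces the arrow and contributes no nontrivial factor; a block $B_s=a_i\cdots a_j$ of length $\geq 2$ forces $c_s$ to be an arrow $b$ with $t(b)=t(a_i)$ and $s(b)=s(a_j)$, that is, a chord of $C$ in the sense of Definition \ref{chord}, and it contributes $\varphi^{(2)}(b)[a_i\cdots a_j]=\varphi(b)[b\Rightcircle C]$. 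Here Lemma \ref{simple_chords} is exactly what makes the notation legitimate: since the indices $(i,j)$ are determined by the arrow $b$, the block is unambiguously $b\Rightcircle C$ and $\cut_b C$ is well defined.

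Finally I would reassemble the sum. The length-$\geq 2$ blocks of a given monomial form a collection of pairwise nonintersecting chords $b_1,\ldots,b_l$ (disjointness of consecutive blocks is precisely the condition $j_r<i_{r+1}$), and the path $P$ underlying the monomial is recovered as $P=\cut_{b_1\cdots b_l}C$. Conversely each collection of nonintersecting chords arises from a unique such monomial, so reindexing the double sum over pairs $(P,\text{monomial})$ as a single sum over chord collections yields $\varphi(W)[C]=\sum_{b_1,\ldots,b_l} W[\cut_{b_1\cdots b_l}C]\prod_{r=1}^l\varphi(b_r)[b_r\Rightcircle C]$. The hard part is making this bijection airtight rather than the algebra: one must treat the cyclic (wrap-around) block carefully and invoke Lemma \ref{simple_chords} so that a chord, viewed as a bare arrow, reconstructs its two cut points uniquely. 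Once those points are pinned down the identity is forced term by term, which is the sense in which the lemma follows directly from the definitions.
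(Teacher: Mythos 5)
Your proposal is correct and is essentially the paper's own argument: the paper offers no proof of Lemma \ref{phiW} beyond the remark that it ``follows directly from definitions,'' and the expansion you describe --- writing $\varphi(W)=\sum_P W[P]\varphi(P)$, splitting each factor $\varphi(c_s)=\varphi^{(1)}(c_s)+\varphi^{(2)}(c_s)$, matching length-one blocks with the linear part and longer blocks with chords, invoking Lemma \ref{simple_chords} so that a chord-arrow determines its cut points, and reassembling collections of nonintersecting chords into $\cut_{b_1\cdots b_l}C$ --- is exactly the bookkeeping the paper leaves implicit.

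One caveat, which is an imprecision inherited from the statement itself rather than a flaw of your strategy, but which your write-up should not paper over: the claim that a length-one block ``contributes no nontrivial factor'' is only true when $\varphi^{(1)}$ fixes each arrow. In general, since arrow spaces are one-dimensional by (\ref{exclusions}), one has $\varphi^{(1)}(a)=\xi_a a$ with $\xi_a\in\kk^\times$, so the monomial attached to a chord collection $(b_1,\dots,b_l)$ carries the additional factor $\prod\xi_a$ taken over the arrows $a$ of $C$ not covered by the blocks $b_r\Rightcircle C$. Concretely, for a diagonal right-equivalence (all $\varphi^{(2)}=0$) and a chordless cycle $C$, formula (\ref{coeff}) as written gives $\varphi(W)[C]=W[C]$, whereas in fact $\varphi(W)[C]=W[C]\prod_{a\in C}\xi_a$. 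This is harmless for the intended application, because the proof of Proposition \ref{phi1} first reduces to $\varphi^{(1)}=\operatorname{id}$; but an airtight proof must either state that hypothesis or insert the scalar factors explicitly, rather than assert that the linear part drops out.
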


\medskip
\noindent \emph{Proof of Proposition \ref{phi1}.}  For simplicity we can assume that the linear part of right-equivalence $\varphi$ is  trivial: $\varphi^{(1)} = \operatorname{id}$. The idea is to use Lemma \ref{phiW} to isolate terms of right-equivalence $\varphi$ that contribute to $\varphi(W)$, and then use the condition that right-equivalence $\varphi$ maps $W$ to primitive potential to extract relations for these terms. 

By the formula (\ref{coeff}) all nontrivial contributions to the coefficient of some $\al\bar\be$-cycle $C$ in $\varphi(W)$ come from cutting a cycle along a non-empty collection of nonintersecting chords. If $C$ is chordless then nothing except for $C$ itself contributes to the coefficient, otherwise the full list of possible configurations of chords in Lemma \ref{simple_chords} can be used for a case by case study.

\begin{figure}[h!]
	\begin{minipage}[b][][b]{.33\textwidth} 
		\centering
			\captionsetup{width = 1\textwidth}
		\includegraphics{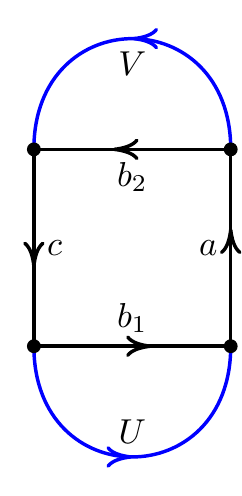}
		\captionof{figure}{Case (i)}
		\label{chord_i}
	\end{minipage}
	\begin{minipage}[b][][b]{.33\textwidth} 
		\centering
		\captionsetup{width = 1.2\textwidth}
		\includegraphics{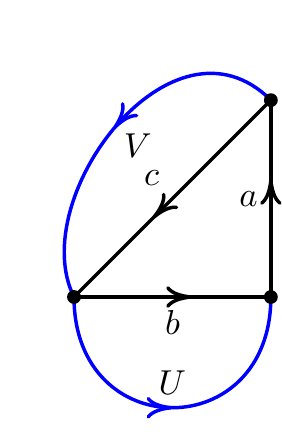}
		\captionof{figure}{Cases (iv,v,viii,ix)}
		\label{chord_iv}
	\end{minipage}
	\begin{minipage}[b][][b]{.32\textwidth}
		\centering
		\captionsetup{width = 1\textwidth}
\includegraphics{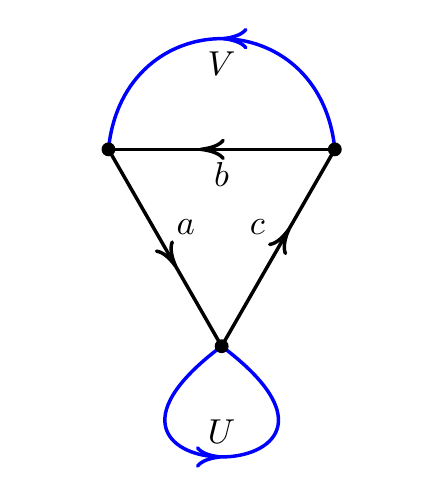}
		\captionof{figure}{Cases (x,xi)}
		\label{chord_x}
	\end{minipage}
\end{figure}

\medskip
\noindent\emph{Cases (ii,iii,vi,vii):} Since $C$ has a unique chord $b$ and $W[\cut_bC]=0$, the identity $\varphi(W)[C] = \varphi^{(1)}W[C]$ is immediate from (\ref{coeff}).

\medskip
\noindent\emph{Case (i):} The part of the potential of $Q(\widetilde x)$ restricted to $\al$- and $\be$-perches has the form:

\begin{equation}\label{W_i}
\begin{split}
\xi & \left[{\al\choose n_\al-1}\rightarrow{\al\choose 0}  \rightarrow{\be\choose 0}\rightarrow{\be\choose n_\be-1}\right]+\\
&\hspace{100pt}+\zeta\left[{\al\choose 0}\rightarrow{\be\choose 0}\rightarrow\ldots\rightarrow{\be\choose n_\be-1}\rightarrow{\al\choose n_\al-1}\right],
\end{split}
\end{equation}

\noindent with $\xi,\zeta\neq 0$. All other cycles in $W$ pass through the vertices decorated by different $\gamma\in\Gamma$ and can be omitted in the consideration because they do not affect $\varphi(W)[C]$. We depict the relevant part of the quiver in Figure \ref{chord_i}. Using notations from there (\ref{W_i}) becomes $\xi ab_1cb_2 + \zeta aUcV$.  Then by formula (\ref{coeff}) nontrivial contribution to $\varphi(W)[C]$ comes from $\cut_{b_1b_2}C$. Terms of $\phi^{(2)}$ that affect it are $b_1\mapsto b_1 + \mu_1U+\ldots$ and $b_2\mapsto b_2+\mu_2V+\ldots$. It follows that the following terms appear in $\varphi(W)$:

$$
\xi a(b_1 + \mu_1U)c(b_2 + \mu_2V)+\zeta aUcV = \xi ab_1cb_2+\xi\mu_1aUcb_2+\xi\mu_2ab_1cV+\xi\mu_1\mu_2aUcV+\zeta aUcV.
$$

\noindent Since $\varphi(W)$ is primitive we have to have zero coefficients in front of $aUcb_2$ and $ab_1cV$. Moreover, (\ref{exclusions}) implies that $b_2$ and $b_1$ are unique chords for $ab_1cV$ and $aUcb_2$ respectively. It follows that $\mu_1 = \mu_2 = 0$, otherwise $\varphi(W)[aUcb_1]$ or $\varphi(W)[ab_1cV]$ is nonzero. The proposition in case (i) is proven.
\medskip

\noindent\emph{Cases (iv,v,viii,ix):}. The relevant part of the quiver can be depicted as in Figure \ref{chord_iv}. The part of $W$ of our interest is $\xi abc+\zeta aUV$, terms in $\varphi$ are $b\mapsto b+\mu U+\ldots$ and $c\mapsto c+\nu V+\ldots$. Thus $\varphi(W)$ gets components:

$$
\xi a(b+\mu U)(c+\nu V)+\zeta aUV = \xi abc + \xi\mu aUc + \xi\nu abV +\xi\mu\nu aUV + \zeta aUV.
$$

\noindent As by assumption $\varphi(W)$ is primitive, coefficients $\varphi(W)[abV]$ and  $\varphi(W)[aUc]$ are zero. Since their only chords are $c$ and $b$ respectively, it follows that $\mu = \nu =0$ and the statement holds in these cases.
\medskip

\noindent\emph{Cases (x,xi):} The quiver spanned by $\al$- and $\be$-perches is redrawn in Figure \ref{chord_x}. The part of potential for this subquiver is $\xi abc+\zeta aVcU$. Terms in $\varphi$ contributing in $\varphi(W)[C]$ are:

$$
\centering
\begin{aligned}
&a\mapsto a+\lambda Ua+\ldots\\
&b\mapsto b+\mu V+\ldots \hspace{15pt}\text{--- only in case (x)}\\
&c\mapsto c+\nu cU+\ldots
\end{aligned}$$

\noindent Hence the contribution to potential is:

\begin{equation}\label{W_x}
\begin{split}
\xi(a+\lambda Ua)(b+\mu V)(c+\nu cU) + \zeta(a+\lambda Ua)&V(c+\nu cU)U = \\
=\xi abc + \xi\lambda Uabc + \xi\mu aVc + U& \xi\nu abcU +\\
+ \xi\lambda\mu UaVc +\xi\mu\nu aVcU&+\xi\mu\lambda UabcU+\xi\lambda\mu\nu UaVcU +\\
+\zeta aVcU&+\zeta\lambda UaVcU +\zeta\nu aVcUU +\zeta\lambda\nu UaVcUU.
\end{split}
\end{equation}

\noindent Since cycle $aVc$ has the unique chord $b$, the condition $\varphi(W)[aVc]=0$ implies $\mu = 0$. 
It is enough to conclude that $W[aVcU] = \varphi(W)[aVcU]$ and the proposition for cases (x,xi) follows.

\medskip
The proof of Proposition \ref{phi1} is complete.

\bigskip
Thus, to describe the space of primitive potentials modulo the action of right-equivalences it is enough to consider linear equivalences $\varphi = \varphi^{(1)}$. Since all arrow spaces in $Q(x)$ are one-dimensional, $\varphi^{(1)}$ acts diagonally.

Any quiver $Q(\widetilde x)$, satisfying (\ref{exclusions},\ref{connect}) gives rise to a $2$-dimensional complex $\mathcal{C}(\widetilde x)$ defined as follows: $1$-skeleton is identified with $Q(\widetilde x)$; for every $\al\bar\be$-cycle there is a $2$-cell attached to $Q(\widetilde x)$ along arrows of the cycle. Note that in our construction orientation of $2$-cells is consistent with orientation of arrows of the quiver under differential in the CW-complex:

$$\dd C = \sum_{i=1}^k a_i,$$

\noindent for any $\al\bar\be$-cycle $C = a_1a_2\ldots a_k$.

Here is our first main result:
\begin{theorem}
For the quiver $Q(\widetilde x)$ satisfying (\ref{exclusions},\ref{connect}) the space of primitive potentials modulo the action of right-equivalences preserving this space is isomorphic to the second cohomology group of $\mathcal{C}(\widetilde x)$ with coefficients in \emph{$\kk^\times$}.
\end{theorem}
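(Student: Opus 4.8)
The plan is to reinterpret the statement as cellular cohomology of $\mathcal{C}(\widetilde x)$ with coefficients in the multiplicative group $\kk^\times$, after which the theorem reduces to a direct identification. First I would record two bookkeeping identifications, both relying on condition (\ref{exclusions}), which forces every arrow space to be one-dimensional. On one hand, a primitive potential is determined precisely by its collection of nonzero coefficients $c_C\in\kk^\times$ indexed by the $\al\bar\be$-cycles $C$; since the $\al\bar\be$-cycles are exactly the $2$-cells of $\mathcal{C}(\widetilde x)$, the set of primitive potentials is identified with the group $C^2$ of cellular $2$-cochains, i.e. of $\kk^\times$-valued functions on the set of $\al\bar\be$-cycles. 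On the other hand, by Proposition \ref{phi1} any right-equivalence preserving the space of primitive potentials acts on that space through its linear part $\varphi^{(1)}$ alone; since the arrow spaces are one-dimensional, $\varphi^{(1)}$ is diagonal and is recorded by a scalar $\lambda_a\in\kk^\times$ for each arrow $a$. Thus the group of effective symmetries is identified with the group $C^1$ of $1$-cochains, i.e. of $\kk^\times$-valued functions on $Q_1$, and every such diagonal map visibly sends primitive potentials to primitive potentials.

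Next I would compute the action under these identifications. If $W=\sum_C c_C\, C$ and $\varphi^{(1)}(a)=\lambda_a a$, then $\varphi^{(1)}(W)=\sum_C c_C\big(\prod_{a\in C}\lambda_a\big)C$, so the coefficient of each cycle $C$ is rescaled by the factor $\prod_{a\in C}\lambda_a$. The key point is that this factor is exactly the cellular coboundary evaluated on $C$: regarding $\lambda=(\lambda_a)$ as a homomorphism from the free abelian group on arrows to $\kk^\times$ and using $\dd C=\sum_{i=1}^k a_i$, one has $(d^1\lambda)(C)=\lambda(\dd C)=\prod_{a\in C}\lambda_a$, where $d^1\colon C^1\to C^2$ is the coboundary of the cellular cochain complex of $\mathcal{C}(\widetilde x)$ with coefficients in $\kk^\times$. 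Hence the action of $C^1$ on $C^2$ is the translation $W\mapsto W\cdot d^1(\lambda)$.

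Finally I would conclude. The orbits of a translation action are exactly the cosets of the image subgroup, so the space of primitive potentials modulo right-equivalences is the quotient group $C^2/\operatorname{im}(d^1)$. As $\mathcal{C}(\widetilde x)$ is two-dimensional there are no $3$-cells, whence $H^2(\mathcal{C}(\widetilde x);\kk^\times)=C^2/\operatorname{im}(d^1)$, and the desired isomorphism follows. I expect no serious obstacle in this argument: the genuine work is already done in Proposition \ref{phi1}, which guarantees that the action depends only on $\varphi^{(1)}$, and what remains is the formal observation that the additive boundary $\dd C=\sum a_i$ of a $2$-cell dualizes to the multiplicative coboundary $\prod\lambda_a$. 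The only points demanding care are the correct passage between the additive convention for chains and the multiplicative convention for $\kk^\times$-cochains, and the verification that the orbit space of the translation action coincides, as a group, with the cokernel of $d^1$.
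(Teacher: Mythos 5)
Your proposal is correct and takes essentially the same route as the paper's own (much terser) proof: both invoke Proposition \ref{phi1} to reduce to diagonal linear equivalences, identify primitive potentials with $\kk^\times$-valued $2$-cochains and the diagonal equivalences with $1$-cochains, and observe that the action is translation by the cellular coboundary, so the orbit space is $C^2/\operatorname{im}(d^1)=H^2(\mathcal{C}(\widetilde x),\kk^\times)$ since there are no $3$-cells. Your write-up merely spells out the bookkeeping (the additive boundary $\dd C=\sum a_i$ dualizing to the multiplicative factor $\prod_{a\in C}\lambda_a$) that the paper dismisses as ``immediate from the construction.''
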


\begin{proof} Any primitive potential $W$ defines a 2-cocycle in $C^2(\mathcal{C}(\widetilde x),\kk^\times)$ in a obvious way.
By Proposition \ref{phi1} it is enough to consider action of diagonal right-equivalences $\varphi = \varphi^{(1)}$ that multiply every arrow $a$ by some nonzero number $\xi_a$. Then it is immediate from the construction of $\mathcal{C}(\widetilde x)$ that the action of such $\varphi$ on $W$ is equivalent to adding the differential of $1$-cochain $\xi_a$ to the $2$-cochain defined by $W$.
\end{proof}

It is worth mentioning that topologically $CW$-complexes under consideration are homotopy equivalent to direct product $\Gamma\times S^1$. Thus the second cohomology group is $$H^2(\mathcal{C}(\widetilde x),\kk^\times) = (\kk^\times)^{\operatorname{rk} H^1(\Gamma,\ZZ)}$$

\subsection{Mutation of primitive potentials on $Q(\widetilde x)$.}

In this subsection we prove two results. The first states that if $x,y\in\MM_\Gamma$ differ by a single braid relation (see \ref{braid_rels}), then CW-complexes $\mathcal{C}(\widetilde x)$ and $\mathcal{C}(\widetilde y)$ are homotopy equivalent. This allows us to identify second cohomology groups of these spaces. Our second result makes connection between cohomology classes represented by primitive potentials on quivers related by mutations (see Proposition \ref{braid_mutations} and Lemmas \ref{lemma_aba},\ref{lemma_aa}). It turns out that this cohomology class is almost preserved up to a certain change of signs. To describe this relation precisely we introduce \emph{twisted cohomology class} associated to any primitive potential $W_x$ on $Q(\widetilde x)$.

Recall that $W_x$ defines an element $h({W_x})\in H^2(\mathcal{C}(\widetilde x),\kk^\times)$ by taking sum of all $\al\bar\be$-cycles with coefficients from the expression for $W_x$. The twist is defined by inserting signs in this expression in a way that will be described momentarily. Note that numbers of $\al\bar\be$- and $\be\bar\al$-cycles in $Q(\widetilde x)$ are equal because they alternate along the subquiver spanned by $\al$- and $\be$- perches. Let $k_{\al\be}$ be this number, so that there is a total of $2k_{\al\be}$ cycles sitting between these two perches. To twist $h(W_x)$ we choose one such cycle for every unordered pair $\{\al,\be\}$ satisfying $C_{\al\be}=-1$. The twisted cohomology class represented by $W_x$ is obtained from $h({W_x})$ by multiplying the coefficient of every chosen cycle by $(-1)^{k_{\al\be}}$ (one cycle per pair of neighboring perches). It is easy to see that the new cohomology class does not depend on the choice of cycles between neighboring perches. We denote the twisted cohomology class associated to $W_x$ by $h'({W_x})$.

\begin{theorem}\label{mut_pot}
Let $x,y\in\MM_\Gamma$ be two words such that $y$ can be obtained from $x$ using a single relation from the list \ref{braid_rels}. Then by Proposition \ref{braid_mutations} either $Q(\widetilde x)$ and $Q(\widetilde y)$ are identical, or $Q(\widetilde y) = \mu_{\al \choose r}Q(\widetilde x)$. In the latter case following statements hold:

\begin{enumerate}[label=(\roman*)]
 \item CW-complexes $\mathcal{C}(\widetilde x)$ and $\mathcal{C}(\widetilde y)$ are homotopy equivalent;
 \item For any primitive potential $W_x$ on $Q(\widetilde x)$,  the potential $\mu_{\al \choose r}W_x$ on $Q(\widetilde y)$ is primitive;
 \item Twisted cohomology classes $h'({W_x})$ and $h'\left(\mu_{\al \choose r}W_x\right)$ represent the same second cohomology class under identification from ~(i).
 \end{enumerate}
\end{theorem}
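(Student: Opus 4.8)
The plan is to treat the three claims in the order stated, since each builds on the previous one. For part (i), I would show that both CW-complexes are homotopy equivalent to $\Gamma\times S^1$, as already observed after the first theorem of the section. Since mutation of a quiver with potential corresponds combinatorially to a flip of the associated surface/complex, the cleanest route is to exhibit an explicit elementary collapse relating $\mathcal{C}(\widetilde x)$ and $\mathcal{C}(\widetilde y)$, or else to invoke that both complexes carry the same homotopy type $\Gamma\times S^1$ and fix a \emph{specific} homotopy equivalence so that the identification of $H^2$ in part (iii) is canonical. I would isolate the part of the complex affected by $\mu_{\al\choose r}$ -- by Lemmas \ref{lemma_aba} and \ref{lemma_aa} this is a local region around the vertex ${\al\choose r}$ spanned by the $\al$-perch and its neighboring perches -- and show that mutation replaces the $2$-cells incident to ${\al\choose r}$ by new $2$-cells (the mutated $\al\bar\be$-cycles) in a way that is a homotopy equivalence rel the rest of the complex. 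The new $\al\bar\be$-cycles are exactly $\mu_{\al\choose r}$ applied to the old ones after reduction, so the local change is a sequence of attaching-map modifications that do not alter homotopy type.

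For part (ii), I would verify directly that $\mu_{\al\choose r}W_x$ is again a sum of \emph{all} $\al\bar\be$-cycles of $Q(\widetilde y)$, each with a nonzero coefficient. The mutation rule for potentials (equation (2.2) in the excerpt) adds the triangle terms $\sum \bar b\bar a[ab]$ and then reduces; I would track which $\al\bar\be$-cycles of $Q(\widetilde x)$ pass through ${\al\choose r}$ and show that, under the shortcut/reversal operation followed by reduction via Theorem \ref{split}, they map bijectively onto the $\al\bar\be$-cycles of $Q(\widetilde y)$. The key point is that $\al\bar\be$-cycles through the mutated vertex contain exactly one $2$-step path $ab$ through ${\al\choose r}$, so $[W]$ replaces this path by the shortcut $[ab]$, producing a cycle of the same combinatorial type in the mutated quiver; cycles not passing through ${\al\choose r}$ are untouched. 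I would use condition (\ref{exclusions}) (one-dimensional arrow spaces, so coefficients are just scalars) to ensure no cancellations force a coefficient to vanish, and check the reduction step does not delete any $\al\bar\be$-cycle -- only the spurious $2$-cycles created by mutation are removed.

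Part (iii) is the heart of the statement and I expect it to be the main obstacle. Here the bookkeeping of signs is delicate: the mutation rule introduces the minus-sign-carrying triangle terms, and after reduction the surviving $\al\bar\be$-cycle coefficients in $\mu_{\al\choose r}W_x$ differ from those of $W_x$ by signs depending on how the cycle interacts with ${\al\choose r}$. The strategy is to compute, for each $\al\bar\be$-cycle $C'$ of $Q(\widetilde y)$, its coefficient in $\mu_{\al\choose r}W_x$ in terms of the coefficient of the corresponding cycle $C$ in $W_x$, and then compare the \emph{twisted} classes $h'(W_x)$ and $h'(\mu_{\al\choose r}W_x)$. The twist by $(-1)^{k_{\al\be}}$ on one cycle per neighboring pair is precisely engineered to absorb the sign discrepancy coming from mutation; I would show that under the homotopy equivalence of part (i), the induced map on $H^2(-,\kk^\times)$ sends $h'(W_x)$ to $h'(\mu_{\al\choose r}W_x)$ by verifying the signs match cocycle-by-cocycle. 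The subtle step will be confirming that the global sign $(-1)^{k_{\al\be}}$ is the correct one and is independent of the choice of representative cycle (already asserted when $h'$ was defined), which I would do by a parity argument using that $\al\bar\be$- and $\be\bar\al$-cycles alternate along the $\al,\be$-subquiver so that the number $k_{\al\be}$ governs how many sign flips the mutation induces. Matching this local sign computation against the differential-of-a-$1$-cochain description of right-equivalences (from the preceding theorem) then yields equality of cohomology classes.
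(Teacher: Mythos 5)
Your overall strategy --- localize the computation at the vertex ${\al\choose r}$, track coefficients of $\al\bar\be$-cycles through premutation and reduction, and let the twist $(-1)^{k_{\al\be}}$ absorb the resulting sign discrepancies --- is the same as the paper's. But there is a genuine gap in your part (ii), and it propagates into (iii). You claim that the $\al\bar\be$-cycles of $Q(\widetilde x)$ "map bijectively onto the $\al\bar\be$-cycles of $Q(\widetilde y)$", each being replaced by a cycle "of the same combinatorial type" via the shortcut operation, and that reduction "only removes the spurious $2$-cycles". This is not what happens. An $\al\bar\be$-cycle through ${\al\choose r}$ that is a triangle becomes, after the shortcut substitution in $[W]$, a $2$-cycle; it is precisely one of the trivial summands killed by reduction, so it does not survive in any combinatorial form. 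Moreover, reduction is not a harmless deletion: as the paper's Lemma \ref{red} makes explicit, splitting off the $2$-cycle from a potential of the form $ab+aU+Vb+W_0$ leaves $-VU+W_0$, i.e. it \emph{splices} the remaining pieces of the two cycles that shared the deleted arrows into a new cycle, whose coefficient is minus the product of the two old coefficients. So the set of $\al\bar\be$-cycles is genuinely rearranged: some old cycles disappear, new merged cycles appear, and the number $k_{\al\be}$ of $\al\bar\be$-cycles can change by one under mutation --- the paper notes this explicitly in the case where the arrows between ${\al\choose r-1}$ and ${\be\choose q-1}$ and between ${\al\choose r+1}$ and ${\be\choose q}$ are both absent. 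Your claimed bijection therefore fails; primitivity of $\mu_{\al\choose r}W_x$ does hold, but the reason is that the new coefficients are (signed) products of old nonzero coefficients, which is exactly what Lemma \ref{red} delivers and what your proposal lacks.

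This misreading of the reduction step undermines your plan for (iii): there is no "corresponding cycle $C$" in $W_x$ for every $\al\bar\be$-cycle $C'$ of $Q(\widetilde y)$, so a cycle-by-cycle coefficient comparison cannot even be set up as stated. The paper instead first applies explicit diagonal right-equivalences (rescaling certain arrows by old coefficients and by $-1$) to bring $\mu_{\al\choose r}W_x$ into a normal form, then identifies the two CW-complexes by contracting one arrow on each side; only after this does one see that when $k_{\al\be}$ is unchanged the classes agree on the nose, while when $k_{\al\be}$ jumps by one a single residual sign survives --- which is precisely what the twist $(-1)^{k_{\al\be}}$ is designed to compensate. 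The paper also organizes the verification into cases according to the presence or absence of the diagonal arrows at ${\al\choose r\pm1}$, for each of the two relevant braid relations (Lemmas \ref{lemma_aba} and \ref{lemma_aa}), and halves the case count by invoking involutivity of mutation (Theorem \ref{m^2}); your proposal has no analogue of this case structure. Your part (i) is fine in outline --- contracting the appropriate arrows gives the identification, and you correctly observe that merely knowing both complexes have the homotopy type of $\Gamma\times S^1$ would not produce the canonical identification needed in (iii) --- but to repair (ii) and (iii) you need the spliced-cycle description of reduction, i.e. Lemma \ref{red} or an equivalent explicit computation.
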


Recall from the argument in the proof of Proposition \ref{braid_mutations} that $Q(\widetilde y) = \mu_{\al \choose r}Q(\widetilde x)$, if $x$ and $y$  are related by one of the two relations: $s_\al  s_\be s_\al \leftrightarrow s_\be s_\al s_\be$ or $s_\al s_{\bar\al}\leftrightarrow s_{\bar\al}s_\al$. These cases are considered below.

We need the following technical statement that describes in explicit terms the process of cancelling a $2$-cycle in a quiver with potential.

\begin{figure}
	\centering
	\begin{minipage}{.5\textwidth}
		\centering
			\includegraphics{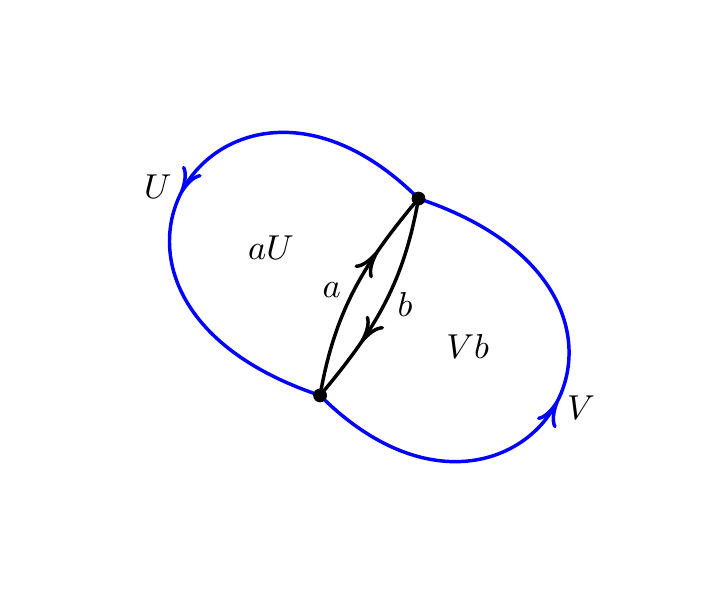}
	\end{minipage}%
	\begin{minipage}{.5\textwidth}
	\centering
		\includegraphics{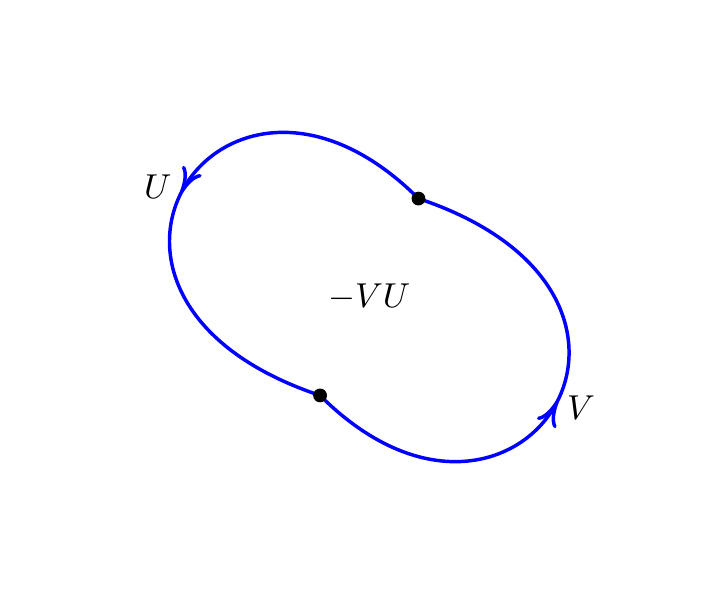}		
	\end{minipage}
	\captionof{figure}{Deleting $2$-cycle}
	\label{delete}
\end{figure}

\begin{lemma} \label{red}
Let $(Q,W)$ be a quiver with potential where $W$ has form $ab+a U+Vb+W_0$, where 

\begin{enumerate}[label=(\roman*)]
\item $ab$ is the product of two arrows forming a $2$-cycle;
\item $U$ and $V$ are \underline{any} elements composable with $a$ and $b$ respectively;
\item $U,V,W_0$ contain no letters $a$ or $b$.
\end{enumerate}
	
\noindent Then $(Q,W)$ is right-equivalent to $(Q',W')\oplus(Q_{\operatorname{triv}},W_{\operatorname{triv}}=ab)$, where $Q'$ is obtained from $Q$ by deleting arrows $a$ and $b$, and $W'=-VU+W_0$ (see Fig. \ref{delete}).
\end{lemma}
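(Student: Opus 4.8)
The plan is to explicitly construct the right-equivalence $\varphi$ that splits off the $2$-cycle $ab$, following the pattern of the reduction described in the worked example after the mutation formulas. Since $W = ab + aU + Vb + W_0$ with $U,V,W_0$ containing no occurrence of $a$ or $b$, I would define $\varphi$ on arrow spaces by sending $a \mapsto a - V$ and $b \mapsto b - U$, while fixing all other arrows. By Proposition \ref{right-eq}, this data determines a unique $R$-algebra endomorphism of $R\langle\langle Q\rangle\rangle$, and since the linear part $\varphi^{(1)}$ is the identity (as $U$ and $V$ have length $\geq 1$, they land in $\mathfrak{m}(Q)^2$ when substituted, provided $U,V$ are genuinely of length $\geq 1$), $\varphi$ is an isomorphism. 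Here I must be careful that $U,V$ are honestly of degree $\geq 1$ so that $V$ and $U$ qualify as admissible targets for $\varphi^{(2)}$; the hypotheses that $ab$ is a genuine $2$-cycle and that $aU$, $Vb$ are composable cyclic terms guarantee this.

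Next I would compute $\varphi(W)$ directly. Substituting gives
\begin{equation*}
\varphi(W) = (a-V)(b-U) + (a-V)U + V(b-U) + W_0,
\end{equation*}
where I use that $\varphi$ fixes $U,V,W_0$ since these contain no $a,b$. Expanding the first product yields $ab - aU - Vb + VU$, and adding the remaining terms $aU - VU + Vb - VU + W_0$ produces cancellations: the $\pm aU$ and $\pm Vb$ terms cancel, and the $VU$ contributions combine to $VU - VU - VU = -VU$. Hence $\varphi(W) = ab - VU + W_0$. I would then observe that in the quiver with potential $(Q, ab - VU + W_0)$, the cyclic monomial $ab$ is now a free $2$-cycle: since $-VU + W_0$ contains no letters $a$ or $b$, the summand $ab$ splits off as a direct summand $(Q_{\mathrm{triv}}, W_{\mathrm{triv}} = ab)$, leaving $(Q', W' = -VU + W_0)$ where $Q'$ deletes $a$ and $b$. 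This exhibits the claimed right-equivalence.

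The one subtle point, which I expect to be the main obstacle, is justifying the final splitting as an honest direct-sum decomposition of quivers with potential in the sense of Theorem \ref{split}, rather than a mere formal rewriting. I would argue that because $-VU + W_0$ involves none of the arrows $a,b$, no further cross terms can arise between the trivial summand $ab$ and the rest, so the decomposition $(Q, \varphi(W)) = (Q', W') \oplus (Q_{\mathrm{triv}}, ab)$ is literal on the level of arrow spaces and potentials. The cyclic-invariance of potentials must be invoked to ensure $aU$ and $Vb$ are being read consistently as cyclic words (so that $U$ is composable on the correct side of $a$ and $V$ on the correct side of $b$); once the composability conventions in hypotheses (i)--(iii) are fixed, the bookkeeping in the expansion above is forced. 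I would conclude by noting that the minus sign in $W' = -VU + W_0$ is exactly the sign appearing in the standard reduction, matching Figure \ref{delete}.
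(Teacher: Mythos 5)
Your proposal is correct and follows essentially the same route as the paper: the paper's proof applies exactly the right-equivalence $a\mapsto a-V$, $b\mapsto b-U$, expands to get $ab-VU+W_0$, and splits off the trivial summand. Your additional care about invoking Proposition \ref{right-eq} and checking that $U,V$ lie in $\mathfrak{m}(Q)^2$ (so the substitution is an admissible isomorphism with identity linear part) only makes explicit what the paper leaves implicit.
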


\begin{proof}
 Apply right-equivalence $\al\mapsto\al- V,\ \be\mapsto\be- U$, then the potential takes form:
 $$W\mapsto(a-V)(b- U)+(a- V)U+V(b- U)+W_0=ab- VU+W_0,$$ 
by assumptions term $U,V,W_0$ includes no arrows $a,b$, so $(Q,W)$ can be decomposed into a direct sum. The statement follows.
\end{proof}

\begin{figure}
	\centering
	\begin{minipage}{.5\textwidth}
	\captionsetup{width=0.8\textwidth}
		\centering
		\captionsetup{width=.95\textwidth}
		\includegraphics{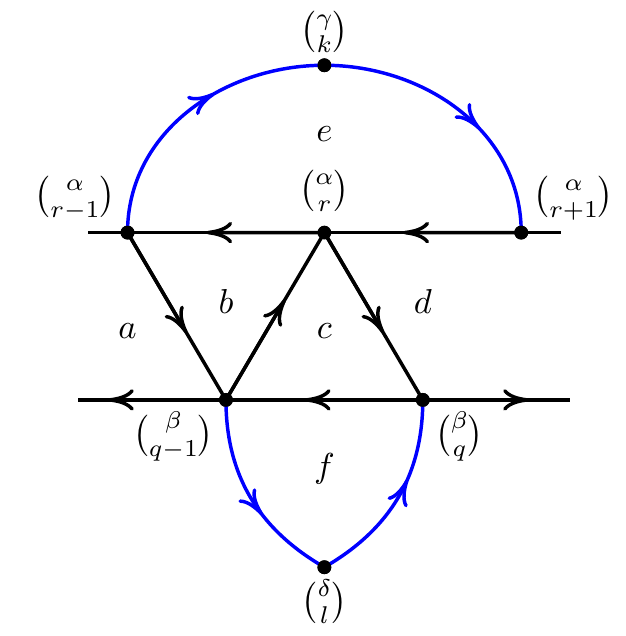}
		\captionof{figure}{Case 1: $(Q(\widetilde x), W_x)$}
		\label{step1}
	\end{minipage}%
	\begin{minipage}{.5\textwidth}
		\centering
		\captionsetup{width=1\textwidth}
		\includegraphics{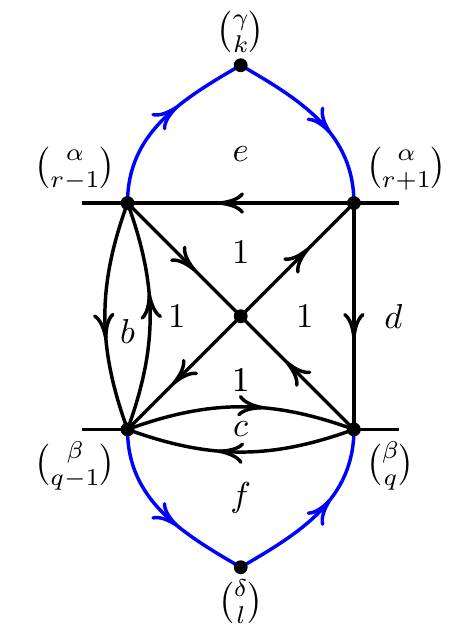}
		\captionof{figure}{Case 1: $(\widetilde{\mu}_{\al\choose r}Q(\widetilde x),\widetilde{\mu}_{\al\choose r}W_x)$}
		\label{step2}
	\end{minipage}
\end{figure}

\begin{figure}
	\centering
	\begin{minipage}{.5\textwidth}
	\captionsetup{width=1\textwidth}
		\centering
	\includegraphics{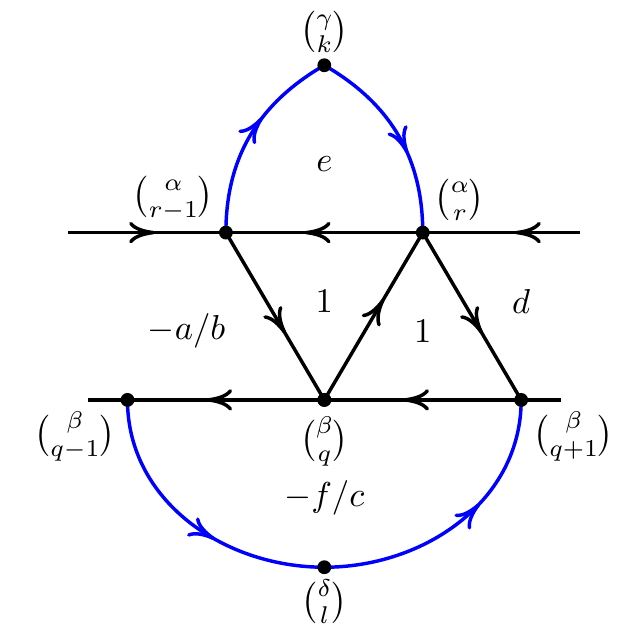}
		\captionof{figure}{Case 1: $(Q(\widetilde y),\mu_{\al\choose r}W_x)$}
		\label{step3}
	\end{minipage}%
	\begin{minipage}{.5\textwidth}
	\captionsetup{width=.95\textwidth}
		\centering
		\includegraphics{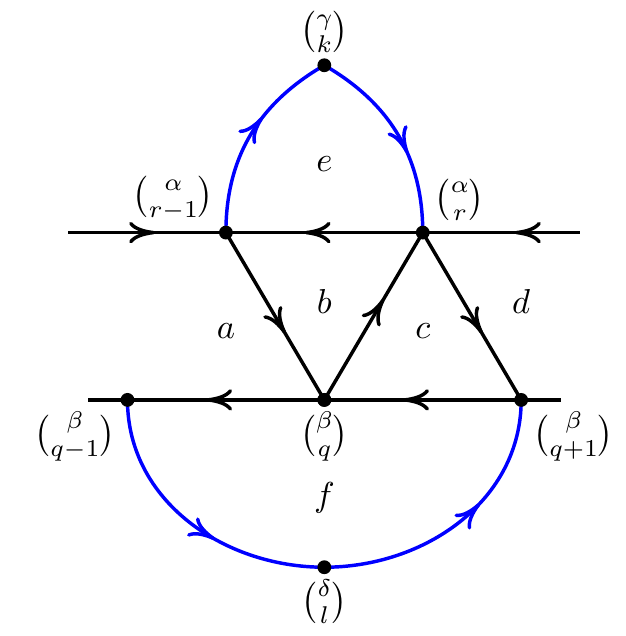}
		\captionof{figure}{Case 1: $(Q(\widetilde y),\mu_{\al\choose r}W_x)$ after the right-equivalence}
		\label{step4}
	\end{minipage}
\end{figure}

\noindent\emph{Proof of Theorem \ref{mut_pot}.} Suppose that word $y$ is obtained from $x$ by replacing a fragment $s_\al^{(r)} s_\be^{(q)} s_\al^{(r+1)}$ with $s_\be^{(q)} s_\al^{(r)} s_\be^{(q+1)}$. Recall from Lemma \ref{lemma_aba} that there are exactly  four arrows adjacent to the mutated vertex $\al\choose r$: ${\al \choose r}\longrightarrow {\al \choose r-1}$, ${\al \choose r+1}\longrightarrow {\al \choose r}$, ${\be \choose q-1}\longrightarrow {\al \choose r}$ and ${\al \choose r}\longrightarrow {\be \choose q}$. It is easy to see that for our statements it is enough to consider only the part of  $Q(\widetilde x)$ that consists of vertices connected with ${\al \choose r}$, and only cycles in the potential $W$ passing through ${\al \choose r}$ or through its neighbors. Indeed, the rest of the picture is intact under the mutation. 

\medskip
Since all $2$-cycles appearing after premutaion appear with nonzero coefficients (by definition of primitive potential), Lemma \ref{red} justifies that all of them can be deleted after reduction.
First two parts of the theorem follow from this description and calculation shown in Figures \ref{step1}-\ref{step4} and \ref{step1v2}-\ref{step2v2}.

Note that in our figures blue arrows arrows represent sums of paths going from $\al$- or $\be$-perch to any other neighboring perch. Thus, arrow ${\be \choose q}\longrightarrow {\be \choose q-1}$ is included in $\be\bar\gamma$-cycle $\forall\gamma\in\Gamma$ with $C_{\al\gamma} = -1$, and similarly for arrows ${\al \choose r}\longrightarrow {\al \choose r-1}$, ${\al \choose r+1}\longrightarrow {\al \choose r}$. However, $\forall\gamma\neq\be$ with $C_{\al\gamma} = -1$ both arrows belong to the same $\al\bar\gamma$-cycle. Also we have to relabel vertices to match vertices of $\mu_{\al\choose r}Q(\widetilde x)$ with vertices of $Q(\widetilde y)$.

\medskip

\begin{figure}
	\centering
	\begin{minipage}{.5\textwidth}
		\centering
		\includegraphics{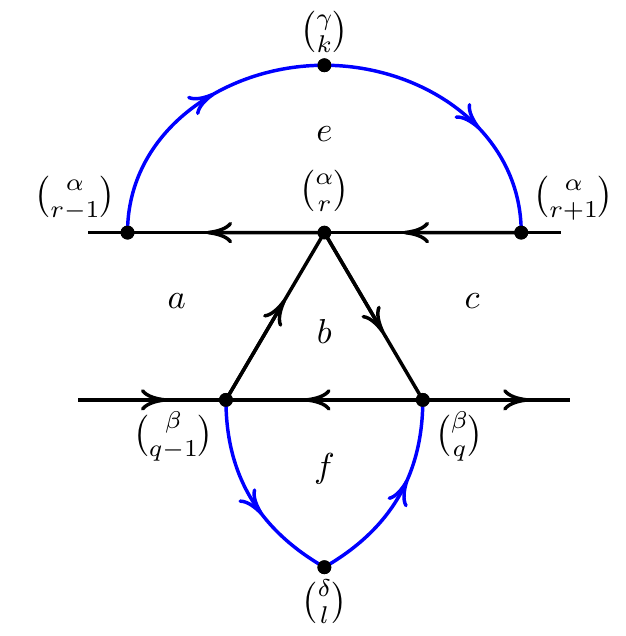}
		\captionof{figure}{Case 2: ($Q(\widetilde x),W_x$)}
		\label{step1v2}
	\end{minipage}%
	\begin{minipage}{.5\textwidth}
		\centering
		\includegraphics{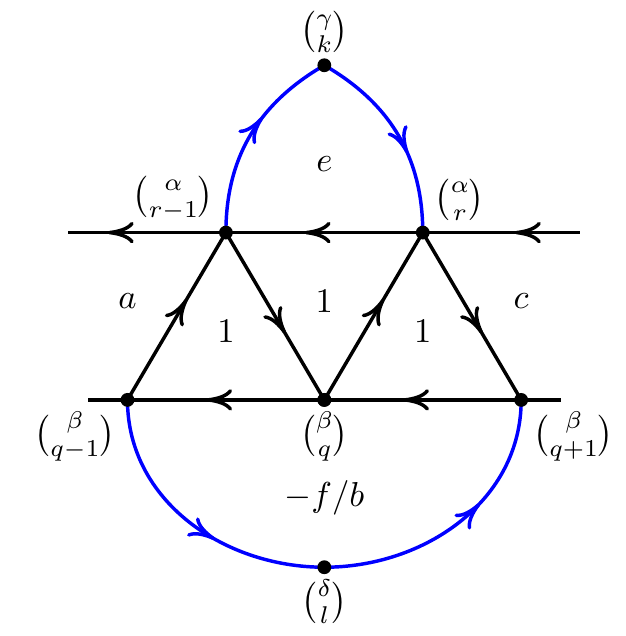}
		\captionof{figure}{Case 2: $(Q(\widetilde y),\mu_{\al\choose r}W_x)$}
		\label{step2v2}
	\end{minipage}
\end{figure}

It remains to check the identity of twisted cohomology classes. For that we take all cycles in $Q(\widetilde x)$ involved in the mutation with arbitrary coefficients (depicted in the interior of cycles in Fig. \ref{step1},\ref{step1v2}). And we compute the change of these coefficients under $\mu_{\al\choose r}$ using Lemma \ref{red} (see Fig. \ref{step3},\ref{step2v2}).

In fact, precise form of $\al\bar\be$- and $\be\bar\al$-cycles involved depends on other letters in words $x$ and $y$. Explicitly, vertices ${\al \choose r-1}$ and ${\be \choose q-1}$ are joined by an arrow in $Q(\widetilde x)$, if in the expression for $x$ some $s_\be$ or $s_{\bar{\al}}$ stands closer on the left to $s_\al^{(r)}$  than any letters $s_\al$ or $s_{\bar{\be}}$ (Fig. \ref{step1}); otherwise this arrow is absent (Fig. \ref{step1v2}). Similarly there are $2$ such possibilities for vertices ${\be \choose q}$ and ${\al \choose r+1}$ resulting in a total of $4$ cases. However, since mutations act as an involution it is enough to consider only half of these cases: either only one arrow is present or, both arrows are absent. Other cases are obtained on the other side of the involution.

\medskip
Now consider the case when $Q(\widetilde x)$ contains an arrow ${\al\choose r-1}\rightarrow{\be\choose q-1}$ and no arrows between ${\al\choose r+1}$ and ${\be\choose q}$ (Figure \ref{step1}). Then the effect of the mutation on the initial potential is shown in the picture \ref{step3}. To pass to the potential in Figure \ref{step4} we apply right-equivalence:

$$
\begin{cases}
\left[{\be\choose q-1}\rightarrow{\al\choose r-1}\right]\mapsto b\left[{\be\choose q-1}\rightarrow{\al\choose r-1}\right]\\
\left[{\be\choose q}\rightarrow{\be\choose q-1}\right]\mapsto -\left[{\be\choose q}\rightarrow{\be\choose q-1}\right]\\
\left[{\be\choose q+1}\rightarrow{\be\choose q}\right]\mapsto c\left[{\be\choose q+1}\rightarrow{\be\choose q}\right]\\
\end{cases}
$$

\noindent Note that after contracting the arrow ${\al\choose r+1}\rightarrow{\al\choose r}$ in Figure \ref{step1} and the arrow ${\be\choose q}\rightarrow{\be\choose q-1}$ in Figure \ref{step4} resulting CW-complexes and potentials become identical. Since the number $k_{\al\be}$ of $\al\bar\be$-cycles is unchanged this implies required equality of twisted cohomology classes.

\medskip
Consider the case when both arrows between ${\al\choose r-1}$ and ${\be\choose q-1}$ and between $\be\choose q$ and $\al\choose r+1$ are absent (Figure \ref{step1v2}). The effect of the mutation on potential is shown in Figure \ref{step2v2}. To compare potential we first apply the right-equivalence $\left[{\be\choose q+1}\rightarrow{\be\choose q}\right]\mapsto -b\left[{\be\choose q+1}\rightarrow{\be\choose q}\right]$.

Now we need to identify second cohomology groups of the two CW-complexes in concrete terms. This can be done by applying the following homotopy equivalences: first contract the arrow ${\al\choose r+1}\rightarrow{\al\choose r}$ in Figure \ref{step1v2} and then contract  ${\be\choose q}\rightarrow{\be\choose q-1}$ in Figure \ref{step2v2}. This way we can embed the former CW complex in the latter. There is an inconsistency though: the coefficient of the cycle $\left[{\be\choose q+1}\rightarrow{\be\choose q}\rightarrow{\al\choose r}\rightarrow{\be\choose q+1}\right]$ is $-b$ instead of $b$. But this is exactly where the twisting comes into play --- the number of $\al\bar\be$-cycles has increased by one. Thus, the equality of the twisted cohomology classes follows, and the theorem is proven for the first type of relations $s_\al s_\be s_\al \leftrightarrow s_\be s_\al s_\be$.

Assume that $y$ is obtained from $x$ by a single relation $s_\al^{(r)} s^{(r+1)}_{\bar\al}\longleftrightarrow s^{(r)}_{\bar\al}s^{(r+1)}_\al$. Recall from Lemma \ref{lemma_aa} that for the quiver $Q(\widetilde x)$ arrows adjacent to the mutated vertex $\al\choose r$ are the following: ${\al\choose r}\rightarrow{\al\choose r-1}$, ${\al\choose r}\rightarrow{\al\choose r+1}$ and one arrow ${\al\choose r}\rightarrow {\be\choose q}$ for every $\be\in\Gamma$ with $C_{\al\be} = -1$.

As in the argument above, there can be different configurations of $\al\bar\be$- and $\be\bar\al$-cycles next to $\al\choose r$ depending on the form of the word $x$. Namely, there can be or there can be no arrows between $\al\choose r-1$ and $\be\choose q$ and between $\al \choose r+1$ and $\be\choose q$. It is enough to consider two out of four possibilities: the other two are obtained on the mutated side $Q(\widetilde y)$ via remark that mutations act as involutions.

\begin{figure}
	\begin{minipage}{.5\textwidth}
		\centering 
\includegraphics{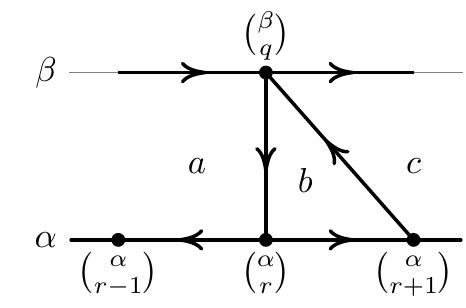}
		\captionof{figure}{Case 1: $(Q(\widetilde x),W_x)$}
		\label{aa_1}
\end{minipage}
\begin{minipage}{.49\textwidth}
		\centering 
		\captionsetup{width=.95\textwidth}
\includegraphics{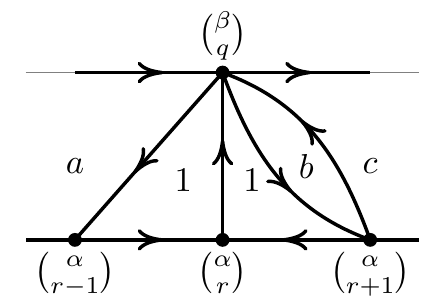}
		\captionof{figure}{Case 1: $(\widetilde{\mu}_{\al\choose r}Q(\widetilde x), \widetilde{\mu}_{\al\choose r}W_x)$}
		\label{aa_2}
\end{minipage}		
\end{figure}
\begin{figure}
\begin{minipage}{.5\textwidth}
\captionsetup{width=.95\textwidth}
		\centering 
\includegraphics{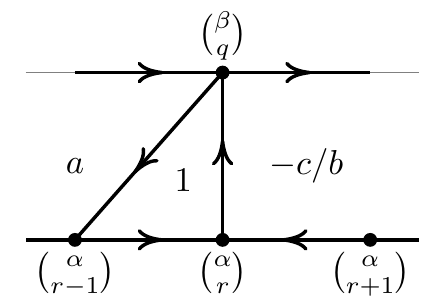}
		\captionof{figure}{Case 1: $(Q(\widetilde y), \mu_{\al\choose r}W_x)$}
		\label{aa_3}
\end{minipage}
\begin{minipage}{.49\textwidth}
\captionsetup{width=.95\textwidth}
		\centering 
\includegraphics{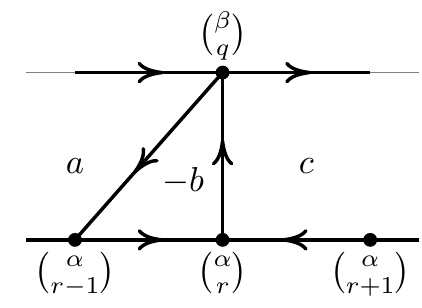}
		\captionof{figure}{Case 1: $(Q(\widetilde y), \mu_{\al\choose r}W_x)$ after the right-equivalence}
		\label{aa_4}
\end{minipage}
\end{figure}

First case we consider is when the arrow from $\al\choose r-1$ to $\be\choose q$ is absent and there is an arrow from $\al\choose r+1$ to $\be\choose q$. Corresponding sequence of mutations is shown in Figures \ref{aa_1}-\ref{aa_4}. Between last to steps right-equivalence sending $\left[ {\al\choose r}\rightarrow{\be\choose q}\right]\mapsto -b \left[ {\al\choose r}\rightarrow{\be\choose q}\right]$ is applied. Observe that the cycle $\left[{\al\choose r-1}\rightarrow{\al\choose r}\rightarrow{\be\choose q}\rightarrow{\al\choose r-1}\right]$ in Figure \ref{aa_4} is taken with coefficient $-b$ and in Figure \ref{aa_1} there is a cycle appearing with coefficient $b$. We will explain how to deal with this discrepancy after considering the second case.

\begin{figure}[h!]
	\begin{minipage}{.5\textwidth}
		\centering 
\includegraphics{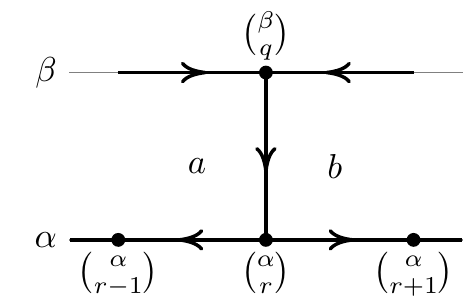}
		\captionof{figure}{Case 2: $(Q(\widetilde x),W_x)$}
		\label{aa_1v2}
\end{minipage}
\begin{minipage}{.49\textwidth}
		\centering 
\includegraphics{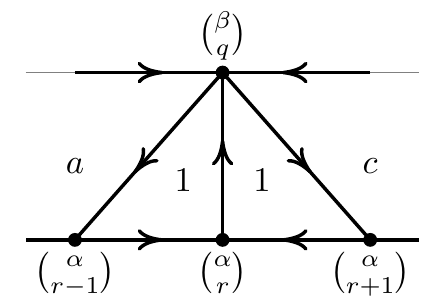}
		\captionof{figure}{Case 2: $(Q(\widetilde y),\mu_{\al\choose r}W_x)$}
		\label{aa_2v2}
\end{minipage}		
\end{figure}

Suppose there are no arrows from  $\al\choose r-1$ to $\be\choose q$ and from $\al\choose r+1$ to $\be\choose q$. The computation is straightforward in this case and is shown in Figures \ref{aa_1v2},\ref{aa_2v2}. 

To compare cohomology groups of CW-complexes $\mathcal{C}(\widetilde x)$ and $\mathcal{C}(\widetilde x)$ note that we can just contract arrows between ${\al\choose r}$ and ${\al\choose r-1}$ and between ${\al\choose r}$ and ${\al\choose r+1}$ on both sides. Under this identification for every $\be\in\Gamma$ from the first case the number $k_{\al\be}$ of $\al\bar\be$-cycles in $Q(\widetilde x)$ and  $Q(\widetilde y)$ is the same, but as noted above the coefficient of one of the cycles is negated. For every $\beta\in\Gamma$ from the second case the number of $\al\bar\be$-cycles is $1$ more in $Q(\widetilde y)$, but coefficients of all $\al\bar\be$- and $\be\bar\al$-cycles agree under our identification. Both of this discrepancies are fixed by applying right-equivalence that changes the sign of arrow ${\al\choose r-1}\rightarrow{\al\choose r}$ in $Q(\widetilde y)$. This concludes the full proof of Theorem \ref{mut_pot}.

\medskip
It is easy to see in the proof of the theorem that conditions (\ref{exclusions},\ref{connect}) can be relaxed. For example, as noted above (\ref{connect}) is only necessary for connectedness of quivers and can be dropped. Absence of (\ref{exclusions}) can produce quivers with $2$-cycles or loops, so one has to be more careful with definitions of quivers with potentials and their mutations. However, the construction allows to deal at least with $2$-cycles in cases when there is a mutation-equivalent quiver for which  (\ref{exclusions},\ref{connect}) hold.

\bibliographystyle{alphaurl}
\bibliography{references}

\end{document}